\DeclarePairedDelimiter\floor{\lfloor}{\rfloor}
\theoremstyle{plain}
\newtheorem{thm}{Theorem}
\newtheorem{cor}{Corollary}
\newtheorem{lemma}{Lemma}
\newtheorem{prop}{Proposition}
\newtheorem{conj}{Conjecture}
\theoremstyle{definition}
\theoremstyle{remark}
\tikzstyle{vertex}=[circle, draw, inner sep=0pt, minimum size=6pt, fill]
\newcommand{\vertex}{\node[vertex]}
\tikzset{snake it/.style={decorate, decoration=snake}}
\newcommand{\BC}{{\mathbb{C}}}
\newcommand{\BE}{{\mathbb{E}}}
\newcommand{\BH}{{\mathbb{H}}}
\newcommand{\BQ}{{\mathbb{Q}}}
\newcommand{\BR}{{\mathbb{R}}}
\newcommand{\BZ}{{\mathbb{Z}}}
\newcommand{\CF}{{\mathcal F}}
\newcommand{\CO}{{\mathcal O}}
\newcommand{\CP}{{\mathcal P}}
\newcommand{\Fp}{{\mathfrak{p}}}
\newcommand{\Fz}{{\mathfrak{z}}}
\newcommand{\blangle}{\big\langle}
\newcommand{\brangle}{\big\rangle}
\newcommand{\Blangle}{\Big\langle}
\newcommand{\Brangle}{\Big\rangle}
\newcommand{\QMod}{\mathrm{QMod}}
\newcommand{\Mod}{\mathrm{Mod}}
\DeclareMathOperator{\Hilb}{Hilb}
\DeclareMathOperator{\Aut}{Aut}
\DeclareMathOperator{\id}{id}
\newcommand{\Pic}{\mathop{\rm Pic}\nolimits}
\newcommand{\Mbar}{{\overline M}}
\newcommand{\ev}{\mathop{\rm ev}\nolimits}
\newcommand{\p}{\mathbb{P}}
\newcommand{\e}{\mathbf{1}}
\newcommand{\pt}{{\mathsf{p}}}
\newcommand{\vacuum}{{v_{\varnothing}}}
\newcommand{\hodge}{{\BE^{\vee}(1)}}
\newcommand{\QJac}{\mathop{\rm QJac}\nolimits}
\long\def\Extended#1{ \ifthenelse{\boolean{isExtended}}{  \noindent {\bf Extended:}\ #1 }{} }
\DeclareFontFamily{OT1}{rsfs}{}
\DeclareFontShape{OT1}{rsfs}{n}{it}{<-> rsfs10}{}
\DeclareMathAlphabet{\curly}{OT1}{rsfs}{n}{it}
\begin{document}
\title[Gromov-Witten theory of $\text{K3} \times \p^1$]{Gromov-Witten theory of $\text{K3} \times \p^1$ \\ and quasi-Jacobi forms}
\author{Georg Oberdieck}
\address {MIT, Department of Mathematics}
\email{georgo@mit.edu}
\date{\today}
\maketitle

\begin{abstract}
Let $S$ be a K3 surface with primitive curve class $\beta$.
We solve the relative Gromov-Witten theory of $S \times \p^1$ in classes $(\beta,1)$
and $(\beta,2)$. The generating series are
quasi-Jacobi forms and equal to a corresponding
series of genus $0$ Gromov-Witten invariants on the Hilbert scheme of points of $S$.
This proves a special case of a conjecture of Pandharipande and the author.
The new geometric input of the paper is a genus bound for hyperelliptic curves on K3 surfaces proven by Ciliberto and Knutsen.
By exploiting various formal properties we find that a key generating
series is determined by the very first few coefficients.

Let $E$ be an elliptic curve.
As collorary of our computations we prove that Gromov-Witten invariants of $S \times E$
in classes $(\beta,1)$ and $(\beta,2)$
are coefficients of the reciprocal of the Igusa cusp form.
We also calculate several linear Hodge integrals on the moduli space of stable maps to a K3 surface
and the Gromov-Witten invariants of an abelian threefold in classes of type $(1,1,d)$.
\end{abstract}

\setcounter{tocdepth}{1} 
\tableofcontents
\setcounter{section}{-1}

\section{Introduction}
\subsection{Overview}
Let $S$ be a nonsingular projective $K3$ surface,
let $\p^1$ be the projective line, and let $0,1, \infty \in \p^1$ be distinct points.
Consider the relative geometry
\begin{equation} \label{dfgsdg} ( S \times \p^1 ) \ / \ \{ S_0, S_1, S_\infty \} \end{equation}
where $S_z$ denotes the fiber
over the point $z \in \p^1$.

For every $\beta \in H_2(S,\BZ)$ and integer $d \geq 0$,
the pair $(\beta,d)$ determines a class in $H_2(S \times \p^1,\BZ)$ by
\[ (\beta,d) = \iota_{S \ast}(\beta) + \iota_{\p^1 \ast}(d [\p^1]) \]
where $\iota_{S}$ and $\iota_{\p^1}$ are inclusions of
fibers of the projection to $\p^1$ and $S$ respectively.

Let
$\beta_h \in \Pic(S) \subset H_2(S,\BZ)$
be a \emph{primitive} non-zero curve class
satisfying
\[ \langle \beta_h, \beta_h \rangle = 2h-2 \]
with respect to the intersection pairing on $S$.
In \cite{HilbK3, K3xE} the following predictions for the relative Gromov-Witten theory of \eqref{dfgsdg} in classes
$(\beta_h,d)$ were made:
\begin{enumerate}
 \item[(i)] The theory is related by an exact correspondence to the three-point genus~$0$ Gromov-Witten theory of the Hilbert schemes of points of $S$.
 \item[(ii)] For all fixed relative conditions, the generating series of Gromov-Witten invariants
 (summed over the genus and the classes $\beta_h$)
 is a quasi-Jacobi form\footnote{
Jacobi forms are two-parameter generalizations of classical modular forms.
A quasi-Jacobi forms is the holomorphic part of a almost-holomorphic Jacobi form, see \cite{Lib} for the definition
and \cite[Sec.1]{RES} for an introduction.
In this paper we will use the explicit presentation of the quasi-Jacobi form algebra presented in \cite[Appendix B]{HilbK3}.
}.
 \item[(iii)] The theory is governed by an explicit Fock space formalism.
\end{enumerate}
The Jacobi form property of the generating series (part (ii)) is especially striking
since it implies various strong identities and constraints on the curve counting invariants.
In case of the Hilbert scheme of points
an explanation for these symmetries has been found in the invariance of
Gromov-Witten invariants under the monodromies of $\Hilb^d(S)$ in the moduli space of hyperk\"ahler manifolds.
For $S \times \p^1$ the geometric origin of the Jacobi form property is less clear.
Nevertheless, a first hint can be found in the following fact proven by Ciliberto and Knutsen:

\begin{thm}[\cite{CK}, Thm 3.1, Rmk 3.2] \label{thm_CK} Let $\beta$ be a primitive curve class on a K3 surface $S$ such that every
curve in $S$ of class $\beta$ is irreducible and reduced. Then the arithmetic
genus $g = p_a(C)$ of every irreducible curve $C \subset S \times \p^1$ in class $(\beta,d)$ with $d>1$
satisfies
\begin{equation} h \geq g + \alpha \big(g - (d-1)(\alpha+1) \big) \label{CK_equation} \end{equation}
where $\langle \beta, \beta \rangle = 2h-2$ and $\alpha = \floor{\frac{g}{2d-2}}$.
\end{thm}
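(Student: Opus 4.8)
The plan is to recast the statement as a gonality bound for an integral curve on the K3 surface $S$ itself, and then to run a Lazarsfeld--Mukai bundle argument refined by a Castelnuovo-type estimate. First I would analyze the two projections of $C \subset S \times \p^1$. Since $\beta$ is primitive and every curve of class $\beta$ is irreducible and reduced, the image $C' := \pr_S(C)$ is an integral curve in $|\beta|$, the induced map $C \to C'$ is finite and birational, and $\pr_{\p^1}\colon C \to \p^1$ is a degree-$d$ cover. Adjunction on $S$ (using $\omega_S \cong \CO_S$) gives $\omega_{C'} \cong \CO_S(\beta)|_{C'}$ and hence $p_a(C') = h$, while the inclusion $\CO_{C'} \hookrightarrow (\pr_S)_*\CO_C$ shows that the partial normalization $C \to C'$ satisfies $g = p_a(C) \le h$. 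The essential output of this step is that the normalization $\widetilde C$ carries a base-point-free pencil $A$ of degree $d$ (the gonality pencil coming from $\pr_{\p^1}$), while the canonical/$\beta$-geometry of the curve is governed entirely by the K3.

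Next I would study this $g^1_d$ through the rank-$2$ Lazarsfeld--Mukai bundle $E$ on $S$ with $c_1(E) = \beta$ and $c_2(E) = d$, built from the evaluation sequence of $(C',A)$. Riemann--Roch on the K3 gives $\chi(E) = h + 3 - d$, and one checks that $E$ is globally generated off a finite set and that $H^2(E) = 0$, so that $h^0(E)$ is large. The dichotomy is then the familiar one: either $E$ is Bogomolov-stable, in which case the Bogomolov inequality together with the Hodge index theorem on $\Pic(S)$ forces a numerical inequality among $\beta^2 = 2h-2$, $d$ and $g$; or $E$ is unstable, in which case a maximal destabilizing sub-line bundle $M \hookrightarrow E$ produces a decomposition $\beta = c_1(M) + c_1(E/M)$ and, by a Donagi--Morrison--Reider type analysis, an elliptic or low-degree pencil $E_0$ on $S$ (with $E_0^2 = 0$) cutting out $A$ on $C'$.

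The final step is an optimization. In both branches the bound is driven by how many copies of the pencil class $E_0$ can be subtracted from $\beta$ while the residual class stays effective; examining the classes $\beta - j E_0$ for $0 \le j \le \alpha$ and applying the Hodge index theorem on the sublattice $\langle \beta, E_0\rangle \subset \Pic(S)$ produces a lower bound for $\beta^2$ that, after choosing the optimal integer cutoff $\alpha = \floor{g/(2d-2)}$, rearranges into the factored form $h \ge (\alpha+1)\big(g - \alpha(d-1)\big)$, which is exactly \eqref{CK_equation}. Equivalently, and as the cleanest route to pinning the precise constant, one may argue that the $\beta$-image of $C$ lies on the rational normal scroll swept out by the fibres of the pencil and invoke the classical Castelnuovo bound for curves on a scroll, with $\alpha$ counting the sheets.

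I expect the main obstacle to be the second and third steps together: controlling the (in)stability of $E$ and, above all, keeping careful track of the singularities of $C$ so that the final inequality is expressed in terms of the arithmetic genus $p_a(C) = g$ rather than the geometric genus of $\widetilde C$. It is precisely here that the hypothesis that every curve of class $\beta$ is irreducible and reduced is used: it guarantees that $C'$ is integral, that the induced pencil has exact degree $d$, and that $\beta$ does not split off a spurious component that would break the Hodge-index optimization.
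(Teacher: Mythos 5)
First, a framing point: the paper does not prove Theorem~\ref{thm_CK} at all. It is imported verbatim from Ciliberto--Knutsen \cite{CK} (Thm 3.1, Rmk 3.2) and used as a black box, so your sketch has to be measured against the argument in \cite{CK}, not against anything in this paper. Your Step 1 (reduction to a pencil on a partial normalization of a curve $C'\in|\beta|$) is correct, though note that integrality of $C'=\pr_S(C)$ and birationality of $C\to C'$ already follow from primitivity alone, since $\pr_{S*}[C]=m[C'_{\mathrm{red}}]=\beta$ forces $m=1$; the standing hypothesis on $|\beta|$ is not needed there, and its real role is elsewhere.

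The genuine gap is in Steps 2--3: the rank-$2$ Lazarsfeld--Mukai dichotomy you set up cannot produce the stated bound. In the stable branch, the Bogomolov/Mukai inequality for a rank-$2$ sheaf with $c_1=\beta$ and the \emph{correct} second Chern class $c_2=d+(h-g)$ (your value $c_2=d$ is the smooth-curve value; the conductor correction $h-g$ is exactly how $p_a(C)$ enters) yields only $h\geq 2(g-d+1)$, which is the case $\alpha=1$ of \eqref{CK_equation} and is strictly weaker than the theorem whenever $\alpha\geq 2$. In the unstable branch, the Donagi--Morrison pencil $E_0$ with $E_0^2=0$ and $\beta-E_0$ effective cannot exist under the hypothesis: $\beta=E_0+(\beta-E_0)$ would be a sum of effective classes, so $|\beta|$ would contain reducible curves. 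Thus the locus where you propose to run the Hodge-index optimization over $\beta-jE_0$ in $\Pic(S)$ is empty --- the hypothesis kills that branch outright --- and the $\alpha$-dependence of the bound cannot be generated by any computation in the Picard lattice of $S$.

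What actually carries the $\alpha$ in \cite{CK} is the \emph{rank} of an auxiliary sheaf, not a lattice computation. For each $j\geq 1$ the multiple $jA$ of the pencil has $h^0(jA)\geq j+1$; choosing a generating $(j+1)$-dimensional subspace $W\subseteq H^0(jA)$ one forms the rank-$(j+1)$ Lazarsfeld--Mukai sheaf $E_j=\bigl(\ker(W\otimes\CO_S\to\nu_*(jA))\bigr)^{\vee}$, whose Mukai vector is $v=\bigl(j+1,\ \beta,\ g-j(d-1)\bigr)$; here $\chi(\nu_*(jA))=jd+1-g$ is computed on the partial normalization $C$, which is precisely where the arithmetic genus (rather than $h$ or the geometric genus) enters. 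The hypothesis that every curve in $|\beta|$ is irreducible and reduced is what forces $E_j$ to be simple, since non-simplicity produces subsheaves whose determinants decompose $\beta$ into effective classes. Mukai's inequality $v^2\geq -2$ for simple sheaves on a K3 then gives $h\geq (j+1)\bigl(g-j(d-1)\bigr)$ for every $j$, and taking $j=\alpha=\floor{g/(2d-2)}$ is exactly \eqref{CK_equation}. Your closing Castelnuovo-on-a-scroll remark is the right heuristic for why the bound has Castelnuovo shape, but for $d>2$ the relevant scroll is $d$-dimensional and making that argument track $p_a(C)$ requires exactly the bookkeeping the sheaf-theoretic route handles automatically; as written, your proof establishes at most the $\alpha=1$ instance of the theorem.
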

An elementary check shows \eqref{CK_equation} implies (in fact is equivalent if $d=2$) to the bound
\[ (g+d-1)^2 \leq 4 h (d-1) + (d-1)^2 \,. \]
On the other side the coefficient $c(h,r)$ in the Fourier expansion $\sum_{h, r} c(h,r) q^h y^r$
of a weak Jacobi form of index $d-1$
is non-zero only if
\[ r^2 \leq 4 h (d-1) + (d-1)^2 \,. \]
We find the genus bound by Ciliberto-Knutsen to match
the coefficient bound for weak Jacobi forms under the index shift\footnote{The shift $r = 1-g-d$
is related to a similar shift in the GW/Pairs correspondence \cite{PaPix1, PaPix2}.} $r=1-g-d$.
The appearence of Jacobi forms in the Gromov-Witten theory of $S \times \p^1$ is partly
reflected in the fact that $d$-gonal curves on generic K3 surfaces have many singularities.

One may ask if constraint \eqref{CK_equation} can be used to determine Gromov-Witten invariants of $S \times \p^1$.
The main technical result of the paper shows this is possible in case $d=2$:
For a key choice of incidence condition, the Gromov-Witten invariants
of $S \times \p^1$ in class $(\beta_h, 2)$ are completely determined by formal properties,
the constraint \eqref{CK_equation} and a few calculations in low genus.
By standard techniques this leads to a full evaluation of the relative Gromov-Witten theory
of $S \times \p^1$ in classes $(\beta_h,1)$ and $(\beta_h,2)$
in terms of quasi-Jacobi forms.

\subsection{Relative Gromov-Witten theory of $\text{K3} \times \p^1$} \label{Section:Relative_Gromov_Witten_theory_of_P1K3}
\subsubsection{Definition}
Let $z_1, \dots, z_k$ be distinct points on $\p^1$, and consider the relative geometry
\begin{equation} ( S \times \p^1 )\ / \ \{ S_{z_1} , \ldots , S_{z_k} \}\,. \label{123} \end{equation}
Let $(\beta, d) \in H_2(S \times \p^1, \BZ)$ be a curve class, and let
$\vec{\mu}^{(1)}, \dots, \vec{\mu}^{(k)}$
be ordered partitions of size $d$ with positive parts. The moduli space
\[ \mathbf{M}^{\bullet}_{g,n, (\beta,d), \mathbf{\mu}} = \Mbar^{\bullet}_{g,n}\big( (S \times \p^1) / \{ S_{z_1}, \dots, S_{z_k} \}, (\beta,d), (\vec{\mu}^{(1)},\ldots, \vec{\mu}^{(k)}) \big) \]
parametrizes possibly disconnected\footnote{The moduli space in the disconnected case is always denoted by a $\bullet$ here.}
$n$-pointed relative stable maps of genus $g$ and class $(\beta,d)$
with ordered ramification profile $\vec{\mu}^{(i)}$ along the divisors $S_{z_i}$ respectively.
The relative evaluation maps
\[ \ev_j^{(i)} \colon
\mathbf{M}_{g,n, (\beta,d), \mathbf{\mu}}^{\bullet}
\, \to\,  S_{z_i} \equiv S \,, \quad j=1,\dots, l(\mu_i), \quad i=1,\dots, k \]
send a relative stable map to the $j$-th intersection point with the divisor $S_{z_i}$.
We let $\ev_1, \ldots, \ev_n$ denote the evaluation maps
of the non-relative marked points.

Relative Gromov-Witten invariants are defined using \emph{unordered} relative conditions.
Let $\gamma_1, \dots, \gamma_{24}$ be a fixed basis of $H^{\ast}(S, \BQ)$.
A cohomology weighted partition $\nu$ is a multiset\footnote{The same as a set but with possible repetitions.}
of pairs
\[ \Big\{ (\nu_1, \gamma_{s_1}) , \ldots, (\nu_{l(\nu)}, \gamma_{s_{l(\nu)}}) \Big\} \]
where $\sum_i \nu_i$ is an unordered partition of size $|\nu|$.
The automorphism group $\Aut(\nu)$ consists of the permutation symmetries of $\nu$.

Consider unordered cohomology weighted partitions
\[ \mu^{(1)}, \dots, \mu^{(k)} \,. \]
For every $i \in \{ 1, \ldots, k \}$ let
$( \mu^{(i)}_{j} \,,\, \gamma^{(i)}_{s_j} )_{j=1, \dots, l(\mu_i)}$
be a choice of ordering of $\mu^{(i)}$, and let
$\vec{\mu}^{(i)} = (\mu^{(i)}_{j})$ be the underlying ordered partition.
We define the reduced Gromov-Witten invariants of $(S \times \p^1) / \{ S_{z_i} \}$
with relative conditions $\mu^{(1)}, \dots, \mu^{(k)}$ by
integration over the reduced virtual class\footnote{The
pullback of the symplectic form from the K3 surface yields a trivial quotient of the
standard perfect-obstruction theory on the moduli space. The reduction by this quotient defines the reduced virtual class,
see \cite{KT} for a modern treatment of this process.}
of the moduli space $\mathbf{M}^{\bullet}_{g,n, (\beta,d), \mathbf{\mu}}$:
 \begin{multline*}
\Big\langle \, \mu^{(1)}, \dots, \mu^{(k)} \, \Big| \, \prod_{i=1}^{n} \tau_{\ell_i}(\alpha_i) \Big\rangle^{S \times \p^1/ \{ z_1, \dots, z_k \}, \bullet}_{g, (\beta,d)} \\
=
\frac{1}{\prod_i | \Aut(\mu^{(i)}) |} \cdot
\int_{ [
\mathbf{M}_{g,n, (\beta,d), \mathbf{\mu}}^{\bullet}
]^{\text{red}} }
\prod_{i=1}^{n} \psi_i^{\ell_i} \ev_i^{\ast}(\alpha_i) \cup \prod_{i=1}^{k} \prod_{j=1}^{l(\mu^{(i)})} \ev^{(i) \ast}_j( \gamma^{(i)}_{s_j} ) \,,
\end{multline*}
where $\alpha_1, \dots, \alpha_n \in H^{\ast}(S \times \p^1, \BQ)$
are cohomology classes and $\psi_i$ is the cotangent line class at the $i$th non-relative marked point.
Since all cohomology of $S$ is even, the integral is independent of the choice of ordering of $\mu_i$.
The automorphism factors correct for the choice of an ordering.

\subsubsection{Evaluations}
Let $S$ be a non-singular projective K3 surface
with elliptic\footnote{We work
here with an elliptically fibered K3 surface to obtain a uniform presentation of our results.
By deformation invariance, our results imply parallel statements for
any non-singular projective K3 surface with primitive curve class $\beta$.} fibration $\pi$ and section $s$,
\[ \pi : S \to \p^1, \quad s : \p^1 \to S \,, \quad \pi \circ s = \id_{\p^1} . \]
The class of a fiber of $\pi$ and the image of $s$ are denoted
\[ F, B \, \in\, \Pic(S) \subset H_2(S,\BZ) \]
respectively. Consider the primitive curve classes
\[ \beta_h = B + hF, \quad h \geq 0 \]
of self-intersection $\langle \beta_h, \beta_h \rangle = 2h-2$.
Let also $\pt \in H^4(S, \BZ)$ be the class of a point, and let $\e \in H^0(S,\BZ)$ be the unit.

Consider Gromov-Witten invariants of $S \times \p^1 / \{ 0,1, \infty \}$ with relative conditions
\begin{equation} \label{Rel_Conditions}
\begin{aligned}
\mu_{m,n} & = \{ (1,\pt)^m (1, F)^n \} \\
\nu_{m,n} & = \{ (1, \e)^m (1,F)^{n} \} \\
D(F) & = \{ (1,F) (1, \e)^{m+n-1} \} \,,
\end{aligned}
\end{equation}
over the points $0,1, \infty$ respectively:
\begin{equation} \mathsf{N}_{g,h}(m,n) = \blangle \mu_{m,n} , \nu_{m,n}, D(F) \brangle^{S \times \p^1/\{ 0,1,\infty \}, \bullet}_{g, (\beta,m+n)}. \label{13531523} \end{equation}
By deformation invariance the left hand side only depends on $g,h,m,n$ alone.
The relative condition $D(F)$ over $\infty$ is included to fix the automorphism of $\p^1$ on the target,
but otherwise plays no important role.
The first result of the paper is the complete evaluation of $\mathsf{N}_{g,h}(m,n)$ for which we require several definitions:

Let $u$ and $q$ be formal variables. For $k \geq 1$ let
\begin{equation} C_{2k}(q) = - \frac{B_{2k}}{2k (2k)!} + \frac{2}{(2k)!} \sum_{n \geq 1} \sum_{d | n} d^{2k-1} \, q^n \,, \label{Eisenstein_Series} \end{equation}
denote the classical Eisenstein series, where $B_{2k}$ are the Bernoulli numbers.
We define the Jacobi theta function
\begin{equation}\label{Thetafunction} \Theta(u,q) = u \exp\left( \sum_{k \geq 1} (-1)^{k-1} C_{2k}(q) u^{2k} \right) \end{equation}
and the Weierstrass elliptic function
\[ \wp(u,q) = - \frac{1}{u^2} - \sum_{k \geq 2} (-1)^k (2k-1) 2k C_{2k}(q) u^{2k-2} \,. \]
We will also require the slightly unusual but important function
\[ \mathbf{G}(u,q) = - \Theta(u,q)^2 \big( \wp(u,q) + 2 C_2(q) \big) \,. \]
Finally, define the modular discriminant
\[ \Delta(q) = q \prod_{m \geq 1} (1-q^m)^{24} \,. \]

\begin{thm} \label{mainthm_1b} For all $m \geq 0$ and $n > 0$,
\[ \sum_{g \in \BZ} \sum_{h = 0}^{\infty} \mathsf{N}_{g,h}(m,n) u^{2(g + m + n - 1)} q^{h-1}
=
\frac{1}{m! (n!)^2} \frac{ \mathbf{G}(u,q)^{m} \Theta(u,q)^{2n} }{ \Theta(u,q)^2 \Delta(q)}. \]
If $n=0$ all the invariants $\mathsf{N}_{g,h}(m,n)$ vanish.
\end{thm}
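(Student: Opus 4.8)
The plan is to reduce the formula for general $(m,n)$ to the cases of degree $d = m+n \leq 2$ by a degeneration argument, and to obtain the degree-$2$ cases --- the heart of the matter --- from the genus bound of Theorem~\ref{thm_CK} together with the quasi-Jacobi form structure. Concretely, I would read the product shape $\mathbf{G}^m \Theta^{2n}$ as reflecting a factorization of the (disconnected) invariants into connected building blocks, and isolate three of them: a base block contributing $\frac{1}{\Theta^2 \Delta}$, a fiber block contributing $\Theta^2$ for each relative insertion $(1,F)$, and a point block contributing $\mathbf{G}$ for each relative insertion $(1,\pt)$. The combinatorial factor $\frac{1}{m!(n!)^2}$ should then arise from the automorphisms of the repeated parts of $\mu_{m,n}$, $\nu_{m,n}$ and $D(F)$ together with the symmetry of distributing identical components.

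The degree-$1$ base is the most accessible: for $(m,n)=(0,1)$ the claimed answer is $\frac{1}{\Delta(q)}$, which I would extract from the Katz--Klemm--Vafa evaluation of the reduced Gromov--Witten theory of $S$ in the primitive class $\beta_h$, using that in degree $1$ over $\p^1$ the projection is essentially rigid, so that all the genus is carried by the K3 factor and the $u$-dependence collapses. To pass from here to arbitrary $(m,n)$ I would use the degeneration formula in the base $\p^1$: degenerating $\p^1$ to a chain of rational curves breaks $S \times \p^1$ into copies glued along fibers $S$, and the resulting gluing formula --- once the building blocks are known --- assembles precisely the product $\frac{\mathbf{G}^m \Theta^{2n}}{\Theta^2 \Delta}$. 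The $\e$-weighted entries in $\nu_{m,n}$ and $D(F)$ are transparent in this formalism, serving only to match dimensions and to fix the automorphism of the target.

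The crux is the computation of the two building-block factors $\Theta^2$ and $\mathbf{G}$, which amounts to evaluating the theory in class $(\beta_h, 2)$ for the two relative conditions $(m,n) = (0,2)$ and $(1,1)$. For the key such series I would argue in three stages. First, establish that the generating series is a quasi-Jacobi form of index $d-1 = 1$ and of the weight dictated by the reduced virtual dimension, so that it lies in a finite-dimensional space. Second, invoke Theorem~\ref{thm_CK}: under the index shift $r = 1-g-d$, the vanishing of $\mathsf{N}_{g,h}(m,n)$ whenever \eqref{CK_equation} fails becomes exactly the statement that the Fourier coefficients vanish outside the range $r^2 \leq 4h(d-1)+(d-1)^2$, which forces the form to lie in a space with only finitely many free coefficients. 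Third, pin those coefficients down by computing $\mathsf{N}_{g,h}(m,n)$ directly in the first few low genera --- via torus localization or reduction to known genus-$0$ and genus-$1$ evaluations --- and match against the proposed closed form.

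The main obstacle I anticipate is this degree-$2$ step, and within it two points stand out: proving \emph{rigorously} that the relevant series is a quasi-Jacobi form of the stated weight and index (rather than merely conjecturally, via correspondence~(ii)), and verifying that the constraints from \eqref{CK_equation} together with the handful of computable low-genus invariants genuinely determine the form with no undetermined coefficients remaining. Controlling the weight-raising part of the quasi-Jacobi algebra precisely enough to close the argument is the delicate point. Finally, the vanishing for $n=0$ I would treat separately: with no fiber-class insertion present in $\mu_{m,0}$ or $\nu_{m,0}$, a dimension count together with a deformation argument should force the reduced integral to vanish, so that no contribution survives to carry the class $\beta_h = B + hF$.
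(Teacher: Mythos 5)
Your overall architecture is the same as the paper's: reduce general $(m,n)$ to the degree-$2$ series by degeneration (the paper does this through the Hodge-integral evaluation of Theorem~\ref{mainthm_1}, obtained by bubbling off $\p^1 \times E$ copies from $S$, with the blocks $\frac{1}{\Theta^2\Delta}$, $\Theta^2$, $\mathbf{G}-1$ given by \eqref{KKV} and \eqref{assd}, and the passage from $(\mathbf{G}-1)^m$ to $\mathbf{G}^m$ coming from the combinatorics of disconnected components), and then pin down the degree-$2$ series using Theorem~\ref{thm_CK} plus low-genus computations. But your stage (a) --- ``establish that the generating series is a quasi-Jacobi form of index $d-1=1$ and of the weight dictated by the reduced virtual dimension'' --- is a genuine gap, and the argument cannot be closed this way. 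The two-variable quasi-Jacobi property is exactly prediction (ii) of \cite{HilbK3, K3xE}: it is a \emph{consequence} of Theorem~\ref{mainthm_1b}, not an available input, and no independent proof of it exists. The entire purpose of Section~\ref{Section_Formal_series_of_quasimodular_forms} is to substitute strictly weaker, provable properties for it: (a) each $u$-coefficient is a quasi-modular form of known weight (modularity in $q$ only, coefficient by coefficient, from \cite{MPT} and \cite[Theorem 9]{BOPY}); (b) each $q$-coefficient is a rational function of $y=-e^{iu}$ (GW/Pairs correspondence); (c) the coefficients $c(h,r)$ vanish for $r^2 > 4h+1$. Proposition~\ref{formel_prop} then shows that (a)--(c) together with vanishing of the first few $u$-coefficients force the series to vanish, the decisive ingredient being Saradha's bound (Lemma~\ref{Lemma:QMod_vanishing}) on the order of vanishing at $q=0$ of a nonzero quasi-modular form --- this is what plays the role that ``finite-dimensionality of $\QJac_{k,1}$'' plays in your sketch. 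You correctly flag this as the delicate point, but without this replacement idea your steps (b) and (c) have no finite-dimensional space to act on, and the proof does not close.

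Two further steps are thinner than they need to be. First, Theorem~\ref{thm_CK} bounds arithmetic genera of \emph{irreducible} curves, so it does not directly constrain Gromov--Witten invariants (which receive contributions from maps with contracted components and multiple covers); the paper transports the bound through reduced stable pairs, where Euler characteristic sees the support curve, and even then obtains only the one-sided bound $c(h,r)=0$ for $r < -\sqrt{4h+1}$ (Step 1 of Proposition~\ref{rgrgreg}); the bound for large positive $r$ requires the symmetry $c(h,r)=c(h,-r)$, which comes from a separate BPS-type expansion of the stable-pairs series (Steps 2--3). Second, the $n=0$ vanishing is not a dimension count: the insertions are arranged so the integrals make sense for all $g$, and the paper's argument is geometric emptiness --- since $\Hilb^k(S)$ is not uniruled, any component incident to $k$ of the point conditions over $0$ must be a vertical line $x_i \times \p^1$, after which incidence with a fiber cycle $D(F)$ chosen disjoint from the $x_i$ is impossible.
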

The left hand side of Theorem \ref{mainthm_1b} is a generating series of
relative Gromov-Witten invariants of $S \times \p^1$ in degree $d=m+n$ over $\p^1$.
The right hand side is a (holomorphic) quasi-Jacobi form of index $d-1$.
Theorem~\ref{mainthm_1b} provides an example for the conjectured quasi-Jacobi
form property of generating series of Gromov-Witten invariants of $S \times \p^1$ in all degrees $d$.

The proof of Theorem~\ref{mainthm_1b} proceeds in two steps:
first, the series are computed in degree $d=2$ over $\p^1$ using induction over the genus
via the result of Ciliberto and Knutsen, see Section~\ref{Section:Genus_induction} for details.
Then, the higher degree case follows by a degeneration and localization argument.
The reduction of higher degree to degree $2$ invariants in the second step only works for very limited choices of relative insertions
and is one of the reasons to restrict to the considered case.
Closed evaluations in higher degree with more general insertions require new methods.

\subsection{Hilb/GW correspondence}
Let
\[ \Hilb^d(S) \]
be the Hilbert scheme of $d$ points of the K3 surface $S$.

For all $\alpha \in H^{\ast}(S ; \BQ)$ and $i > 0$ let 
\[ \Fp_{-i}(\alpha) : H^{\ast}(\Hilb^d(S);\BQ) \to H^{\ast}(\Hilb^{d+i}(S);\BQ), \ \gamma \mapsto \Fp_{-i}(\alpha) \gamma \]
be the Nakajima creation operator
obtained by adding length $i$ punctual subschemes incident to a cycle Poincare dual to $\alpha$.
The cohomology of $\Hilb^d(S)$ is completely described by the cohomology of $S$
via the action of the operators $\Fp_{-i}(\alpha)$ on the vacuum vector 
\[ \vacuum \in H^{\ast}(\Hilb^0(S);\BQ) \equiv \BQ. \]

To every cohomology weighted partition $\mu = \{ (\mu_i, \gamma_{s_i}) \}$ of size $d$ we associate the class
\[ | \mu \rangle = \frac{1}{\Fz(\mu)} \prod_{i} \Fp_{-i}(\gamma_{s_i}) \vacuum \]
in $H^{\ast}(\Hilb^d(S), \BQ)$, where $\Fz(\mu) = |\Aut(\mu)| \prod_{i}\mu_i$.

Let $\beta \in H_2(S,\BZ)$ be a non-zero curve class on $S$.
The associated curve class on $\Hilb^d(S)$,
defined as the Poincare dual to
\[ \Fp_{-1}(\beta) \Fp_{-1}(\pt)^{d-1} \vacuum \,, \]
is denoted by $\beta$ as well. We will also require $A \in H_2(\Hilb^d(S),\BZ)$, the class of an exceptional curve
Poincare dual to
\[ \Fp_{-2}(\pt) \Fp_{-1}(\pt)^{d-2} \vacuum \,. \]

Let $\lambda_1, \dots, \lambda_r$ be cohomology weighted partitions and let
\begin{equation} \label{HilbK3invariants}
\big\langle \lambda_1, \dots, \lambda_n \big\rangle^{\Hilb^d(S)}_{0,\beta + kA}
=
\int_{[ \Mbar_{0,n}(\Hilb^d(S),\beta + kA) ]^{\text{red}}} \prod_{i=1}^{n} \ev_i^{\ast}(\lambda_i)
\end{equation}
be the reduced genus $0$ Gromov-Witten invariants of $\Hilb^d(S)$ in class $\beta + k A$ \cite{HilbK3}.
The following GW/Hilb correspondence was conjectured in \cite{K3xE}.

\begin{conj}[\cite{K3xE}] \label{asdasd} \label{GW/Hilb_correspondence}
For primitive $\beta$,
\begin{multline} \label{eqn_correspondence}
(-1)^d \sum_{k \in \BZ} \big\langle \mu, \nu, \rho \big\rangle^{\Hilb^d(S)}_{0,\beta + kA} y^k \\
\ = \  
(-iu)^{l(\mu) + l(\nu) + l(\rho) - d}
\sum_{g \geq 0} \blangle \mu , \nu , \rho \brangle^{S \times \p^1 / \{ 0,1,\infty \}}_{g, (\beta,d)} u^{2g-2}
\end{multline}
under the variable change $y = - e^{iu}$.
\end{conj}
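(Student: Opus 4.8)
The plan is to establish the correspondence \eqref{eqn_correspondence} in the two cases $d=1$ and $d=2$ that the methods of this paper reach, by \emph{solving both sides completely} and then matching the resulting quasi-Jacobi forms. The organizing principle is that, for fixed $d$, the three-point genus-$0$ series on the Hilbert scheme and the genus-summed relative series on $S\times\p^1$ are both structure constants of a Frobenius-type multiplication on the Fock space $\bigoplus_{d\geq 0} H^\ast(\Hilb^d(S))$: on the left through the reduced quantum product of $\Hilb^d(S)$, and on the right through the degeneration formula for $(S\times\p^1)/\{0,1,\infty\}$, which expresses the pair-of-pants invariant as a composition of relative caps. My first step is to make both structures explicit through the operator formalism of \cite{HilbK3} (prediction~(iii) above) and to note that each is pinned down by its pairing together with a generating set of multiplication operators, the key member being multiplication by the divisor class $D(F)$.

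I would dispose of $d=1$ first. Here $\Hilb^1(S)=S$, the exceptional class $A$ does not exist, and the $k$-sum collapses to $k=0$, so the left side is $-\langle \mu,\nu,\rho\rangle^{S}_{0,\beta}$. On the right the partitions $\mu,\nu,\rho$ each have length one, the prefactor is $(-iu)^{3-1}=-u^2$, and the reduced virtual dimension forces every contribution of genus $g>0$ to vanish, so the genus sum truncates to $g=0$ and the statement reduces to a comparison of two genus-$0$ reduced invariants. Since a genus-$0$ stable map of class $(\beta,1)$ has degree one over $\p^1$, rigidifying the target identifies the relative invariant with the genus-$0$ reduced invariant of $S$ in class $\beta$, and the substitution $y=-e^{iu}$ is vacuous at this order. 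This settles the base case and, by degeneration, supplies input for $d=2$.

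The substance of the argument is $d=2$. On the relative side the distinguished slice is computed by Theorem~\ref{mainthm_1b}: for the insertions $\mu_{m,n},\nu_{m,n}$ over $0,1$ and the divisor $D(F)$ over $\infty$, the genus- and $h$-summed series equals $\frac{1}{m!(n!)^2}\,\G(u,q)^{m}\,\Theta(u,q)^{2n}/(\Theta(u,q)^{2}\Delta(q))$, a quasi-Jacobi form of index $d-1=1$. This records the matrix elements of quantum multiplication by $D(F)$ against the classes built from $\Fp_{-1}(\e),\Fp_{-1}(F),\Fp_{-1}(\pt)$, which is precisely the data determined on the Hilbert scheme side in \cite{HilbK3}. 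I would match the two after the change of variables $y=-e^{iu}$, tracking the degree shift $(-iu)^{l(\mu)+l(\nu)+l(\rho)-d}$ and the index shift $r=1-g-d$ that converts the genus grading $u^{2g-2}$ into the exceptional-class grading $y^{k}$. To reach an arbitrary triple $\mu,\nu,\rho$ of size $2$ I would then invoke the full Fock-space formalism: deformation invariance in $\beta$ and the monodromy symmetries of $\Hilb^2(S)$, together with the associativity relations, propagate the divisor computation to all insertions.

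The hard part is not this final matching but showing that the relative theory in class $(\beta,2)$ is \emph{determined} in the first place. This is where the Ciliberto--Knutsen bound of Theorem~\ref{thm_CK} is indispensable: it constrains the nonvanishing pairs $(g,h)$ exactly as the Fourier support of a weak Jacobi form of index $d-1$ under the shift $r=1-g-d$, forcing the generating series to be quasi-Jacobi of the correct index and thereby reducing infinitely many unknown invariants to the handful of low-genus coefficients that the genus induction computes by hand. I expect the two genuine obstacles to be, first, carrying out this genus induction cleanly, verifying that the finitely many initial coefficients both suffice and agree with the closed formula; and second, establishing that the relative degeneration structure and the quantum product on $\Hilb^2(S)$ are the \emph{same} Frobenius structure up to $y=-e^{iu}$, so that agreement on the $D(F)$-generated data really does force agreement everywhere. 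Both rest on having the full operator formalism available and correctly normalized on each side.
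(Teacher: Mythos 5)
Your overall strategy --- compute a distinguished slice of initial values, then propagate to arbitrary insertions through relations shared by both sides of the correspondence --- is indeed the paper's strategy, but two of your steps fail as stated. First, in the $d=1$ case you claim that ``the reduced virtual dimension forces every contribution of genus $g>0$ to vanish.'' It does not: for a threefold the virtual dimension of $\Mbar_{g,n}(X,\beta)$ equals $\int_\beta c_1(T_X)+n$ and is independent of the genus, so no dimension count truncates the genus sum. The vanishing for $g>0$ is precisely the nontrivial content of Proposition~\ref{vanishing1}, which the paper proves geometrically (rigidity of the finitely many rational curves in class $\beta_h$ on a generic K3, together with a stable-pairs contribution calculation), and which is then extended from the insertions $(1,F),(1,F),(1,F)$ to arbitrary degree-$1$ insertions via Proposition~\ref{Proposition_take_out_div}. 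Without this, your base case is unproven.

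Second, for $d=2$ your initial data is too small and your propagation mechanism is not justified. Theorem~\ref{mainthm_1b} supplies only two of the five special series the induction needs, namely the cases $(m,n)=(0,2)$ and $(1,1)$; the paper must separately obtain
$\blangle (1,F)^2, D(F), (1,\pt)(1,\e)\brangle$, $\blangle (2,\pt), D(F), D(F)\brangle$ and $\blangle D(\pt), D(F), D(\pt)\brangle$ (Lemmas~\ref{L0}, \ref{L1}, \ref{L2}), and the first and last of these require genuinely new geometric input --- degeneration/localization computations resting on the Hodge-integral evaluations of Lemmas~\ref{Hodge_Evaluations} and~\ref{GDFDG} --- since nothing in the $D(F)$-slice of Theorem~\ref{mainthm_1b} involves a part $(2,\gamma)$ or the class $D(\pt)$. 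Moreover, the step you explicitly defer (``establishing that the relative degeneration structure and the quantum product on $\Hilb^2(S)$ are the same Frobenius structure'') is exactly the mathematical content that makes propagation possible: the paper proves it as Proposition~\ref{Proposition_WDVV_analog} (a degeneration-formula argument showing the relative brackets satisfy the relations corresponding to the WDVV equations on $\Hilb^d(S)$) and Proposition~\ref{Proposition_take_out_div} (rubber calculus matching the divisor axiom), and then checks directly that these relations reduce every degree-$2$ invariant to the five special ones. Finally, invoking the monodromy symmetries of $\Hilb^2(S)$ to propagate on the $S\times\p^1$ side is circular: those symmetries act on the Hilbert-scheme theory, and the paper emphasizes that their geometric counterpart for $S\times\p^1$ is unknown; only relations established directly for the relative theory may be used.
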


The Gromov-Witten invariants of $\Hilb^d(S)$ which correspond
to the invariants $\mathsf{N}_{g,h}(m,n)$
were calculated in \cite{HilbK3}.
The result exactly matches the evaluation of Theorem~\ref{mainthm_1b} under the correspondence of Conjecture \ref{GW/Hilb_correspondence}.
Hence Theorem \ref{mainthm_1b} gives an example of Conjecture \ref{GW/Hilb_correspondence} in every degree~$d$.
For low degree we have the following result:

\begin{thm} \label{GWHilb_thm} \label{thm_GWHilb_correspondence} Conjecture \ref{asdasd} holds if $d=1$ or $d=2$. \end{thm}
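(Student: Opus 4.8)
The plan is to split the statement into the case $d=1$, handled by direct geometry, and the case $d=2$, which is substantive and rests on Theorem~\ref{mainthm_1b}. In both cases the content beyond Theorem~\ref{mainthm_1b} is the passage from the distinguished insertions $\mu_{m,n},\nu_{m,n},D(F)$ of \eqref{Rel_Conditions} to \emph{arbitrary} cohomology weighted partitions $\mu,\nu,\rho$ of size $d$.

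For $d=1$ one has $\Hilb^1(S)=S$ and the exceptional class $A$ is absent, so the sum over $k$ in \eqref{eqn_correspondence} reduces to the single term $k=0$ and the left-hand side is $(-1)\langle\mu,\nu,\rho\rangle^{S}_{0,\beta}$. On the right-hand side a stable map in class $(\beta,1)$ has degree one over $\p^1$, hence a unique component dominating the base; the three relative conditions rigidify the automorphisms of $\p^1$ and pin this component to a section. I would show by a dimension count on the reduced obstruction theory that only $g=0$ contributes and that the resulting genus-zero relative invariant equals the reduced genus-zero invariant of $S$ in class $\beta$. Matching the prefactor $(-iu)^{l(\mu)+l(\nu)+l(\rho)-1}=(-iu)^{2}$ and the sign $(-1)^{d}$ then yields \eqref{eqn_correspondence}; the only real work here is the vanishing of the positive-genus contributions.

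For $d=2$ I would realise both sides of \eqref{eqn_correspondence} as matrix elements of operators and reduce the identity to finitely many checks. On the Hilbert-scheme side the genus-zero three-point functions are the structure constants of the quantum cohomology of $\Hilb^2(S)$; the class $D(F)$ acts as quantum multiplication by a divisor, so $\mathsf N_{g,h}(m,n)$ is precisely the matrix element of this divisor operator between $|\mu_{m,n}\rangle$ and $|\nu_{m,n}\rangle$. On the relative side the degeneration formula along $\p^1$ presents every three-point invariant through the TQFT generated by the tube and pair-of-pants operators acting on $H^{*}(\Hilb^{2}(S))$, and a corresponding divisor operator controls this TQFT. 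Theorem~\ref{mainthm_1b} evaluates these matrix elements in closed form, the matching genus-zero invariants of $\Hilb^2(S)$ are computed in \cite{HilbK3}, and the two agree after the substitution $y=-e^{iu}$, as noted after Conjecture~\ref{asdasd}.

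It remains to verify that the computed matrix elements exhaust a spanning set, and this is the step I expect to be the main obstacle. Concretely I would use deformation invariance to reduce to the elliptic K3, the divisor equation to absorb every divisor insertion into the curve class $\beta+kA$, and the dimension constraint from the reduced virtual class to cut the insertions down to finitely many types built from $\e$ and $\pt$; the length-two parts $\{(2,\gamma)\}$ must then be related to the length-one data through the explicit form of the operators $\Fp_{-i}$. The genuine difficulty is to prove that quantum multiplication by $D(F)$, together with the classical cup product, reconstructs the entire quantum ring of $\Hilb^2(S)$ --- a WDVV-type reconstruction requiring that $D(F)$ separate the relevant classes --- and, on the relative side, that the insertions of \eqref{Rel_Conditions} span the space of relative conditions entering the degeneration formula. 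Establishing these two reconstructions, and checking that they are interchanged by the correspondence, is the heart of the $d=2$ argument.
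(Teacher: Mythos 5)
Your skeleton --- seed evaluations plus a WDVV/divisor-type reconstruction run compatibly on both sides of the correspondence --- is indeed the paper's strategy: your ``tube and pair-of-pants'' operators are the degeneration relations of Proposition~\ref{Proposition_WDVV_analog}, and your divisor operator is Proposition~\ref{Proposition_take_out_div}. But there are two genuine gaps. The larger one is in $d=2$: Theorem~\ref{mainthm_1b} does not supply enough seed data. It provides only two of the five series at which the reduction procedure terminates, namely $\langle (1,F)^2, D(F), (1,F)^2\rangle$ (the case $(m,n)=(0,2)$) and $\langle (1,\pt)(1,F), D(F), D(F)\rangle$ (the case $(m,n)=(1,1)$, using $\nu_{1,1}=D(F)$). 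The paper's ``direct check'' reduces an arbitrary degree-$2$ relative invariant via Propositions~\ref{Proposition_WDVV_analog} and~\ref{Proposition_take_out_div} to the five series \eqref{special_case_invs} --- mirroring \cite{HilbK3}, where five basic cases seed the genus-$0$ reconstruction on $\Hilb^2(S)$ --- and three of those five lie outside Theorem~\ref{mainthm_1b}. One of them, $\langle (2,\pt), D(F), D(F)\rangle$, can in fact be derived from Theorem~\ref{mainthm_1b} by the relations (Lemma~\ref{L1}); but $\langle (1,F)^2, D(F), D(\pt)\rangle$ and $\langle D(\pt), D(F), D(\pt)\rangle$ (Lemmas~\ref{L0} and~\ref{L2}) are \emph{not} reachable by the relations and require independent geometric computations: localization and degeneration arguments which, for Lemma~\ref{L2}, rest on the Hodge-integral evaluations of Lemmas~\ref{Hodge_Evaluations} and~\ref{GDFDG}. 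Your fallback of relating the length-two parts ``through the explicit form of the operators $\Fp_{-i}$'' is not available: on the $S\times\p^1$ side there is no a priori Nakajima-operator structure --- that identification is exactly what is being proved. Conversely, the step you flag as the main obstacle (that divisor multiplication plus the classical product reconstructs everything) is not one: it is already established on the Hilbert scheme side in \cite{HilbK3} and transported to the relative side by Propositions~\ref{Proposition_WDVV_analog} and~\ref{Proposition_take_out_div}.

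The second gap is in $d=1$. You propose to show by ``a dimension count on the reduced obstruction theory'' that only $g=0$ contributes, but no dimension count can do this: since $\dim(S\times\p^1)=3$ and all parts of the ramification profiles equal $1$, the reduced virtual dimension of the relative moduli space is independent of $g$, so positive-genus invariants pass exactly the same dimension test as genus-$0$ ones. The vanishing is a nontrivial geometric fact; the paper proves it for the insertion $(1,F)^3$ in Proposition~\ref{vanishing1} (rigidity of rational curves together with a stable-pairs contribution calculation) and then determines all remaining degree-$1$ invariants with Proposition~\ref{Proposition_take_out_div}, matching the divisor axiom on $\Hilb^1(S)=S$. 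In short, in both cases what is missing from your plan is not the reconstruction formalism, which you correctly identify, but the additional seed computations (and, for $d=1$, the positive-genus vanishing) needed to make it run.
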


Let $(z,\tau) \in \BC \times \BH$.
The ring $\QJac$ of quasi-Jacobi forms is the linear subspace
\[ \QJac \subset \BQ[ \Theta(z,\tau), C_2(\tau), C_4(\tau), \wp(z,\tau), \wp^{\bullet}(z,\tau), J_1(z,\tau)] \]
of functions which are holomorphic at $z=0$;
here $\Theta$ is the Jacobi theta function, 
$\wp$ is the Weierstrass elliptic function,
$\wp^{\bullet}$ is its derivative with respect to $z$,
and $J_1$ is the logarithmic derivative of $\Theta$ with respect to $z$, see \cite[Appendix B]{HilbK3}.
The space $\QJac$ is naturally graded by index $m$ and weight $k$,
\[ \QJac = \bigoplus_{m \geq 0} \bigoplus_{k \geq -2m} \QJac_{k,m}, \]
with finite-dimensional summands $\QJac_{k,m}$.
We identify a quasi Jacobi form $\psi(z, \tau)$ with its power series expansions in the variables
$q = e^{2 \pi i \tau}$ and $u = 2 \pi z$.

In \cite{HilbK3} the invariants of $\Hilb^2(S)$ have been completely determined in the primitive case.
In particular, combining \cite[Theorem 3]{HilbK3} and Theorem~\ref{GWHilb_thm} we have the following.

\begin{cor} Let $\mu, \nu, \rho$ be cohomology weighted partitions of size $2$.
Then, under the variable change $u = 2 \pi z$ and $q = e^{2 \pi i \tau}$, we have
\[
(-iu)^{l(\mu) + l(\nu) + l(\rho) - d}
\sum_{g, h} \blangle \mu, \nu, \rho \brangle^{S \times \p^1 / \{ 0,1,\infty \}, \bullet}_{g, (\beta_h,2)} u^{2g-2} q^{h-1}
\ = \ 
\frac{\psi(z,\tau)}{\Delta(q)}
\]
for a quasi-Jacobi form $\psi(z,\tau)$ of index $1$ and weight $\leq 8$.
\end{cor}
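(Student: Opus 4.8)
The plan is to obtain the corollary as a direct combination of its two cited inputs: the $d=2$ case of the Gromov--Witten/Hilbert scheme correspondence, Theorem~\ref{GWHilb_thm}, and the closed evaluation of the genus~$0$ invariants of $\Hilb^2(S)$ from \cite[Theorem 3]{HilbK3}. No new geometry is needed; the work is entirely in translating one generating series into the other and keeping track of the index and weight.

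First I would pass from $S\times\p^1$ to the Hilbert scheme. By Theorem~\ref{GWHilb_thm} the correspondence \eqref{eqn_correspondence} holds for $d=2$, so for the primitive class $\beta_h$ and each fixed $h$,
\[
(-iu)^{l(\mu)+l(\nu)+l(\rho)-2}\sum_{g}\blangle \mu,\nu,\rho\brangle^{S\times\p^1/\{0,1,\infty\},\bullet}_{g,(\beta_h,2)}u^{2g-2}
=\sum_{k\in\BZ}\big\langle \mu,\nu,\rho\big\rangle^{\Hilb^2(S)}_{0,\beta_h+kA}\,y^k
\]
under $y=-e^{iu}$, the sign $(-1)^d$ being trivial for $d=2$. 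Here the correspondence is read in the disconnected theory, matching the superscript $\bullet$ of the corollary. Multiplying by $q^{h-1}$ and summing over $h\ge 0$ converts the left side into precisely the series of the corollary and the right side into the full two-variable generating function of genus~$0$ invariants of $\Hilb^2(S)$ over all primitive classes $\beta_h$.

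Second I would apply \cite[Theorem 3]{HilbK3}, which evaluates this two-variable series in closed form. Since the K3 point count contributes the universal factor $\Delta(q)^{-1}$, writing
\[
\psi(z,\tau)=\Delta(q)\sum_{k,h}\big\langle \mu,\nu,\rho\big\rangle^{\Hilb^2(S)}_{0,\beta_h+kA}\,y^k q^{h-1}
\]
the theorem shows $\psi\in\QJac$. The index equals $d-1=1$: it is governed by the exceptional class $A$, whose enumeration is recorded by $y=-e^{iu}$ and which carries index $1$ in degree $2$. This already gives the asserted shape $\psi/\Delta$ with $\psi$ of index $1$.

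The one step requiring real bookkeeping is the weight bound, which I expect to be the main obstacle. The reduced virtual dimension of $\Mbar_{0,3}(\Hilb^2(S),\beta_h+kA)$ is $(\dim_{\BC}\Hilb^2(S)-3)+3+1=5$, so a nonvanishing invariant forces the complex cohomological degrees of $\mu,\nu,\rho$ to sum to $5$; together with $\dim_{\BC}\Hilb^2(S)=4$ this restricts the admissible insertions to a finite list. Tracking the weights of the generators $\Theta,\wp,C_2,\G$ through the formulas of \cite[Theorem 3]{HilbK3}, and maximizing over this list, bounds the weight of $\psi$ by $8$. Care is needed because the prefactor $(-iu)^{l(\mu)+l(\nu)+l(\rho)-2}$ and the change of variables $u=2\pi z$, $q=e^{2\pi i\tau}$ shift the modular weight, so the weight appearing in \cite[Theorem 3]{HilbK3} must be matched to the normalization of $\psi$ used here before the bound $\le 8$ can be read off.
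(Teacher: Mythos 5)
Your proposal is correct and is exactly the paper's argument: the corollary is obtained by combining Theorem~\ref{GWHilb_thm} (the $d=2$ case of the correspondence \eqref{eqn_correspondence}) with the closed evaluation of the primitive genus~$0$ theory of $\Hilb^2(S)$ in \cite[Theorem 3]{HilbK3}, the factor $\Delta(q)^{-1}$ and the index~$1$, weight~$\leq 8$ quasi-Jacobi form $\psi$ being read off directly from that evaluation. Your additional bookkeeping on the dimension constraint and weight normalization is a sensible way to make the weight bound explicit, but it is the same route the paper takes.
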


\subsection{The product $\text{K3} \times E$} \label{Section_Introduction_K3xE}
Let $S$ be a nonsingular projective $K3$ surface, and let
$E$ be a nonsingular elliptic curve.
The 3-fold
$$X=S \times E$$
has trivial canonical bundle, and hence is Calabi-Yau.
%
Let $\beta \in H_2(S,\mathbb{Z})$ be an effective curve class
and let $d\geq 0$ be an integer.
The pair $(\beta,d)$ determines a class in $H_2(X,\mathbb{Z})$ by
$$(\beta,d)= \iota_{S*}(\beta)+ \iota_{E*}(d[E])$$
where $\iota_S$ and $\iota_E$ are inclusions of fibers of the projections to $E$ and $S$ respectively.

The moduli space $\Mbar^{\bullet}_{g}(X, (\beta,d))$ of disconnected genus $g$ stable maps in class $(\beta,d)$ carries a reduced virtual class
\[ [ \Mbar^{\bullet}_{g}(X, (\beta,d)) ]^{\text{red}} \]
of dimension $1$. The group $E$ acts on the moduli space by translation
and the dimension of the reduced class correspond to the $1$-dimensional orbits under this action.
We define a count of curves in $X$ by
imposing an incidence condition which selects one point in each $E$ orbit.
Concretely, let $\omega \in H^2(E, \BZ)$ be the class of a point and let
$\beta^{\vee} \in H^2(S, \BQ)$ be a class satisfying $\langle \beta, \beta^{\vee} \rangle = 1$.
We define
\begin{equation} \label{15343}
\mathsf{N}_{g,(\beta,d)}^X = \int_{[ \Mbar^{\bullet}_{g,1}(X, (\beta,d)) ]^{\text{red}}} \ev_1^{\ast}( \pi_1^{\ast}(\beta^{\vee}) \cup \pi_2^{\ast}(\omega) ) \,.
\end{equation}
A complete evaluation of $\mathsf{N}_{g,(\beta,d)}^X$
was conjectured in \cite{K3xE}
matching the physical predictions \cite{KKV}.
We consider here the case of primitive $\beta$.

For primitive $\beta_h$ the integral
\eqref{15343} only depends on the norm $\langle \beta_h, \beta_h \rangle = 2h-2$.
We write
\[ \mathsf{N}_{g,h,d}^X = \mathsf{N}_{g,(\beta_h,d)}^X \,. \]
\begin{conj}[\cite{K3xE}] \label{fjgdf} Let $\tilde{q}$ be a formal variable. Then
\begin{equation*} \label{chi10}
\sum_{d = 0}^{\infty} \sum_{h = 0}^{\infty} \sum_{g \in \BZ} \mathsf{N}_{g,h,d}^X u^{2g-2} q^{h-1} \tilde{q}^{d-1} = \frac{1}{\chi_{10}(u,q,\tilde{q})}
\end{equation*}
where $\chi_{10}(u,q, \tilde{q})$ is the Igusa cusp form in the notation of \cite{K3xE}.
\end{conj}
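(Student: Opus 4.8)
The plan is to prove Conjecture~\ref{fjgdf} by showing that the two sides are meromorphic Siegel modular forms of the same weight with the same pole, and then invoking the rigidity of such forms to force equality after matching a controlled amount of low-degree data. Introduce the period variables $(\tau,z,\sigma)$ of the Siegel upper half space $\BH_2$, with $q=e^{2\pi i\tau}$, $\tilde q=e^{2\pi i\sigma}$, and $z$ corresponding to the genus variable $u$ under the dictionary of Conjecture~\ref{GW/Hilb_correspondence}, and set
\[
Z \;:=\; \sum_{d\ge 0}\sum_{h\ge 0}\sum_{g\in\BZ}\mathsf{N}^X_{g,h,d}\,u^{2g-2}q^{h-1}\tilde q^{\,d-1}.
\]
The first step is a degeneration. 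We degenerate only the elliptic factor $E$ to a nodal rational curve $\p^1/(0\sim\infty)$, keeping the K3 surface fixed so that the holomorphic symplectic form persists and the reduced virtual class survives the degeneration (as in the reduced-theory degeneration formulas of Maulik--Pandharipande and the author). The degeneration formula then expresses the reduced $S\times E$ theory in class $(\beta_h,d)$ as a trace of the relative theory of $(S\times\p^1)/\{S_0,S_\infty\}$, obtained by gluing the two relative divisors along the diagonal of $\Hilb^d(S)\times\Hilb^d(S)$:
\[
\sum_{g,h}\mathsf{N}^X_{g,h,d}\,u^{2g-2}q^{h-1}
\;=\;\sum_{|\lambda|=d}\;\blangle \lambda,\, \lambda^{\vee}\brangle^{(S\times\p^1)/\{0,\infty\},\,\bullet},
\]
the incidence class $\pi_1^{\ast}\beta^{\vee}\cup\pi_2^{\ast}\omega$ becoming a single interior insertion that rigidifies the target and selects one point per $E$-orbit. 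The point is that $Z$ is assembled from the \emph{full} relative $S\times\p^1$ theory in all cohomology-weighted boundary conditions $\lambda$ of size $d$, not merely the special conditions evaluated in Theorem~\ref{mainthm_1b}.

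The second step converts this trace into a computation on the Hilbert scheme. Upgrading Conjecture~\ref{GW/Hilb_correspondence} to all $d$ and all insertions rewrites the gluing trace as the graded trace, over the Fock space $\bigoplus_d H^{\ast}(\Hilb^d(S))$, of the operator of quantum multiplication by the curve class; here $\tilde q$ becomes the Fock grading variable. By the explicit Fock-space formalism (part (iii) of the conjectures of \cite{HilbK3, K3xE}) this operator is a normally ordered vertex operator, so its graded trace can in principle be evaluated in closed form. The $d\le 2$ evaluations of Theorem~\ref{mainthm_1b} and Theorem~\ref{thm_GWHilb_correspondence} supply both initial data and a consistency check for this operator.

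The third step is the modularity and rigidity argument. The assembled series carries three symmetries: (A) quasi-Jacobi modularity in $(\tau,z)$ of index $d-1$ for each $d$, from the K3 monodromy acting on the reduced theory, exactly as in the $d\le 2$ case; (B) quasi-Jacobi modularity in $(\sigma,z)$ for each fixed $h$, inherited from the quasimodularity of the Gromov--Witten theory of the elliptic curve $E$ in the sense of Okounkov--Pandharipande; and (C) the exchange symmetry $\tau\leftrightarrow\sigma$ swapping the K3 curve-class degree with the $E$-covering degree. Symmetries (A)--(C) generate the Jacobi subgroups of $\mathrm{Sp}_4(\BZ)$; completing to full $\mathrm{Sp}_4(\BZ)$-modularity, identifying the weight as $-10$, and locating the pole as a second-order pole along the Humbert locus $z=0$, pins $Z$ down inside the one-dimensional space of weight $-10$ meromorphic Siegel forms with this pole, which forces $Z=1/\chi_{10}$. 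The non-modular dependence on the quasimodular generator $C_2$ is to be controlled by a holomorphic anomaly equation --- a recursion expressing $\partial_{C_2}Z$ through lower operators, derived from the boundary behaviour of the reduced virtual class --- so that (A)--(C) together with the $d\le 2$ initial data determine $Z$ uniquely; one then verifies directly that $1/\chi_{10}$ satisfies the same anomaly equation and initial conditions.

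The main obstacle is the exchange symmetry (C), together with the extension of the relative and Hilb/GW evaluations to all $d$ with arbitrary insertions, which the paper explicitly flags as requiring new methods. The swap $\tau\leftrightarrow\sigma$ has no manifest geometric origin on $S\times E$, and establishing full $\mathrm{Sp}_4(\BZ)$-modularity directly from Gromov--Witten geometry seems out of reach. The most promising route is to bypass a direct geometric symmetry altogether: prove the holomorphic anomaly equation intrinsically and show that $1/\chi_{10}$ is its unique solution with the prescribed initial data, thereby transferring the burden from \emph{establishing $\mathrm{Sp}_4(\BZ)$} to \emph{solving a rigid recursion with finitely many initial conditions}. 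Secondary obstacles are the convergence and well-definedness of the graded Fock-space trace and the sign and normalization bookkeeping in the degeneration and gluing formulas.
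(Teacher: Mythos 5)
The statement you are proving is not proven in the paper at all: it is stated as Conjecture~\ref{fjgdf} (quoted from \cite{K3xE}), and the paper establishes only its $\tilde q^{0}$ and $\tilde q^{1}$ coefficients, i.e.\ the cases $d=1,2$ of Theorem~\ref{Theorem_K3xE}, by combining the degree~$\le 2$ GW/Hilb correspondence (Theorem~\ref{thm_GWHilb_correspondence}) with the closed evaluations of Theorem~\ref{mainthm_1b} and the Hilbert-scheme computations of \cite{HilbK3}. Your proposal likewise does not supply a proof: every load-bearing step beyond $d\le 2$ is exactly one of the open problems the paper flags. Concretely, your second step assumes Conjecture~\ref{GW/Hilb_correspondence} for all $d$ and \emph{arbitrary} cohomology-weighted boundary conditions (proven here only for $d\le 2$), and additionally assumes the Fock-space/vertex-operator formalism (prediction (iii) of the introduction) in a form strong enough to evaluate a graded trace in closed form --- neither is established, and the paper states explicitly that closed evaluations in higher degree with general insertions ``require new methods.'' Your third step assumes quasi-Jacobi modularity of index $d-1$ for every $d$ (symmetry (A), known only through $d=2$), while the completion to $\mathrm{Sp}_4(\BZ)$ rests on the exchange symmetry (C) and on a holomorphic anomaly equation, both of which you yourself identify as unproven. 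A plan whose essential steps are conjectures is a program, not a proof.

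Two more local points. First, the rigidity endgame is sound as stated \emph{if} full modularity were available: since $\chi_{10}$ vanishes to order exactly $2$ along the $\mathrm{Sp}_4(\BZ)$-orbit of $z=0$ and nowhere else, $Z\cdot\chi_{10}$ would be a holomorphic weight-$0$ Siegel form, hence constant. But $Z$ is a priori only a formal series whose coefficients involve the quasimodular generator $C_2$, so genuine modularity literally fails; the anomaly equation is therefore not a technical patch but the actual missing theorem, and its derivation ``from the boundary behaviour of the reduced virtual class'' is asserted, not argued. Second, your step-one gluing formula needs the $\Fz(\eta)$ weights (compare \eqref{302}), and if you push the interior insertion off to the relative condition $D(F)$ as the paper does, a correction term $\chi(\Hilb^d(S))\,\mathbf{G}(u,q)^d/(\Theta(u,q)^2\Delta(q))$ appears which is \emph{not} part of the Hilbert-scheme trace --- see \eqref{500}--\eqref{501}; ignoring it would already falsify the identification of $Z$ with a pure Fock-space trace. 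To your credit, the route you sketch --- an intrinsic holomorphic anomaly equation, rigidity, and the paper's $d\le 2$ evaluations as initial data --- is a coherent strategy in the spirit of \cite{RES} and correctly diagnoses where the difficulty lies; but as written it proves nothing beyond what Theorem~\ref{Theorem_K3xE} already gives.
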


Conjecture \ref{fjgdf} contains several known cases. In curve class $(\beta, 0)$ the
invariant $\mathsf{N}^X_{g, h,d}$ reduces to the Katz-Klemm-Vafa formula proven in \cite{MPT}.
The case $(\beta_{0}, d)$ for $d \geq 0$ reduces to the product
of a $\mathcal{A}_1$-resolution times an elliptic curve computed in \cite{M}.
The cases $(\beta_0, d)$ and $(\beta_1, d)$ have been recently obtained
by J.~Bryan \cite{Bryan-K3xE}.
Here we show the cases $(\beta_h, 1)$ and $(\beta_h,2)$ of Conjecture \ref{fjgdf}:

\begin{thm} \label{Theorem_K3xE}
For $d=1$ and $d=2$ we have
\[ \sum_{h \geq 0} \sum_{g \in \BZ} \mathsf{N}_{g,h,d}^X u^{2g-2} q^{h-1} = \left[ \frac{1}{\chi_{10}(u,q,\tilde{q})} \right]_{\tilde{q}^{d-1}} \]
where $\left[ \ \cdot \ \right]_{\tilde{q}^k}$ denotes the coefficient of $\tilde{q}^k$.
\end{thm}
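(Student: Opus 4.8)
The plan is to reduce Theorem~\ref{Theorem_K3xE} to the relative Gromov--Witten theory of $S \times \p^1$ computed in Theorem~\ref{mainthm_1b} by degenerating the elliptic curve $E$. Concretely, I would degenerate $E$ to a nodal rational curve $E_0 = \p^1/(0 \sim \infty)$, so that $X = S \times E$ degenerates to the self-gluing of the relative geometry $(S \times \p^1)/\{S_0, S_\infty\}$ along the identification $S_0 \cong S_\infty$. The key geometric point is that the holomorphic symplectic form — and hence the reduction of the obstruction theory to the reduced virtual class — is pulled back from the K3 factor $S$, which is untouched by a degeneration of the $E$-factor. Consequently the reduced class is uniform across the degenerating family and degenerates to the reduced relative classes, so that the reduced degeneration formula applies without the leading-term subtleties that occur when one degenerates the surface itself (as in the Katz--Klemm--Vafa setting of \cite{MPT}).

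Next I would position the incidence condition of \eqref{15343}. The point class $\omega \in H^2(E,\BZ)$ rigidifies the translation action and may be placed at a smooth point $1 \in E_0$ away from the node, forcing the marked point onto the interior fiber $S_1$; since for the elliptic K3 one has $\langle \beta_h, F \rangle = B\cdot F + h\,F\cdot F = 1$, the fiber class $F$ is a valid choice of $\beta^{\vee}$, and the insertion $\ev_1^{\ast}(\pi_1^{\ast} F \cup \pi_2^{\ast}\omega)$ becomes exactly the relative condition $D(F) = \{(1,F)(1,\e)^{d-1}\}$ of \eqref{Rel_Conditions} over $S_1$. After relabeling the three special fibers as $0,1,\infty$, the degeneration formula then yields
\[ \mathsf{N}^X_{g,h,d} = \sum_{\lambda} \blangle \lambda,\, \lambda^{\vee},\, D(F) \brangle^{(S \times \p^1)/\{0,1,\infty\},\, \bullet}_{g,(\beta_h,d)}, \]
where the sum is over cohomology weighted partitions $\lambda$ of size $d$ at the two glued divisors, the partition $\lambda^{\vee}$ is obtained by replacing each weight by its Poincar\'e dual and summing over a basis of $H^{\ast}(S,\BQ)$, and the standard gluing and automorphism factors are understood.

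For $d = 1$ and $d = 2$ the partitions $\lambda$ of size $d$ are few — only $(1)$ for $d=1$, and $(2)$, $(1,1)$ for $d=2$ — so the right-hand side is a \emph{finite} sum. Each relative invariant appearing is covered by the full evaluation of the relative theory of $S \times \p^1$ in classes $(\beta_h,1)$ and $(\beta_h,2)$: the specific invariants $\mathsf{N}_{g,h}(m,n)$ of Theorem~\ref{mainthm_1b} are the case $\lambda = \mu_{m,n}$, $\lambda^{\vee} = \nu_{m,n}$, and the remaining cohomology weights are supplied by the degree-$\le 2$ evaluation established in the body of the paper, or equivalently through the Hilb/GW correspondence of Theorem~\ref{GWHilb_thm} together with the $\Hilb^2(S)$ invariants of \cite{HilbK3}. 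Assembling the finite sum produces, for each $d$, an explicit generating series in $u$ and $q$ which is a quasi-Jacobi form divided by $\Delta(q)$.

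The final step is to identify this series with the $\tilde{q}^{d-1}$ coefficient of $1/\chi_{10}$. I would use the Fourier--Jacobi expansion of the Igusa cusp form in the normalization of \cite{K3xE}, whose low-order coefficients are explicit quasi-Jacobi forms over $\Delta(q)$, and verify agreement in the two cases $d-1 = 0,1$ after the variable change $y = -e^{iu}$. I expect the main obstacle to be twofold. The geometric crux is the compatibility of the reduced virtual class with the $E$-degeneration described in the first paragraph; once this is granted the argument is formal. The remaining labor is the bookkeeping in the matching: one must track the signs and the genus/$u$-power shift governed by the number of parts $l(\lambda)$ — reflecting the gap between the exponent $u^{2(g+d-1)}$ of Theorem~\ref{mainthm_1b} and the exponent $u^{2g-2}$ of \eqref{15343}, exactly the shift $l(\mu)+l(\nu)+l(\rho)-d$ visible in Conjecture~\ref{GW/Hilb_correspondence} — and then confirm that the self-gluing sum of the Theorem~\ref{mainthm_1b} series reproduces the known Fourier--Jacobi coefficients of the reciprocal Igusa cusp form.
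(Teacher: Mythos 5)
Your overall route is the paper's route (Section~\ref{Section_Proof_of_SxE_Theorem}): degenerate $E$ to a nodal curve, pass to the relative theory of $(S\times\p^1)/\{S_0,S_1,S_\infty\}$ with $\beta^\vee=F$ supplying the condition $D(F)$, and evaluate using the solved degree $\le 2$ theory. But your key displayed formula is wrong, and this is a genuine gap rather than bookkeeping. Trading the interior insertion $\tau_0(F\boxtimes\omega)$ for the relative condition $D(F)$ over a third fiber is not automatic: it is itself a degeneration (one bubbles off a copy of $S\times\p^1$ carrying the marked point), and in that degeneration the class $(\beta_h,d)$ splits with $\beta_h$ going entirely to one side. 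You kept only the splitting in which $\beta_h$ stays on the component carrying the two glued divisors. The other splitting --- $\beta_h$ on the bubble carrying the marked point, with gluing condition $(1,\pt)^d$, while the component with the two identified divisors carries only curves of class $(0,d)$ --- survives in the reduced disconnected theory and contributes
\[
\chi(\Hilb^d(S))\, d!\, \blangle (1,\pt)^d \,\big|\, \tau_0(F\boxtimes\omega)\brangle^{S\times\p^1}
\;=\;
\chi(\Hilb^d(S))\,\frac{\mathbf{G}(u,q)^d}{\Theta(u,q)^2\,\Delta(q)}\,,
\]
which is exactly the second term of the paper's \eqref{500} and \eqref{501}; for $d=1$ it is the term $24\,\blangle \pt \,\big|\, \tau_0(F\boxtimes\omega)\brangle^{S\times\p^1,\bullet}$ already visible in \eqref{300}.

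This omission is fatal to your final step, because the missing term is precisely the polar part of $\left[1/\chi_{10}\right]_{\tilde q^{d-1}}$. The gluing sum you wrote down equals, under the GW/Hilb correspondence proven here for $d\le 2$, the Hilbert scheme series $\mathcal{H}_d$ of \eqref{123999}, which for each power of $q$ is a \emph{finite} Laurent polynomial in $y$ (the holomorphic part $\mathcal{F}_d$ in the Corollary closing Section~\ref{Section_Proof_of_SxE_Theorem}); by contrast the $\tilde q^{d-1}$ coefficient of $1/\chi_{10}$ contains $\chi(\Hilb^d(S))\,\mathbf{G}^d/(\Theta^2\Delta)$, whose factor $1/\Theta^2$ has a pole at $u=0$ and an infinite expansion in $y$. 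So the identity $\mathsf{N}^X_{g,h,d}=\sum_\lambda\blangle\lambda,\lambda^\vee,D(F)\brangle^{S\times\p^1/\{0,1,\infty\},\bullet}_{g,(\beta_h,d)}$ is false as stated, and your concluding comparison with the Fourier--Jacobi expansion of $1/\chi_{10}$ could not close. Once the correction term is added, the rest of your plan --- evaluating the gluing sum via Theorem~\ref{GWHilb_thm} and the $\Hilb^2(S)$ results of \cite{HilbK3}, computing the correction by localization and Theorem~\ref{mainthm_1}, and matching with the Igusa expansion via the Katz-Klemm-Vafa formula for $d=1$ and \cite[Prop.~2]{HilbK3} for $d=2$ --- is exactly the paper's proof.
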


\subsection{Abelian threefolds}
Consider a complex abelian variety $A$ of dimension $3$,
and let $\beta \in H_2(A, \BZ)$ be a curve class of type
\[ (d_1, d_2, d_3), \quad d_1, d_2 > 0,\ d_3 \geq 0 \,, \]
where the type is obtained from the standard divisor theory of the dual abelian variety $A^{\vee}$.
Since $d_1, d_2 > 0$, the action of $A$ on the moduli space $\Mbar_{g}(A,\beta)$
by translation has finite stabilizers and the stack quotient
\[ \Mbar_{g}(A,\beta) / A \]
is Deligne-Mumford.
A $3$-reduced virtual class
$[ \Mbar_{g}(A, \beta) / A ]^{\text{3-red}}$
of dimension $0$
has been defined in \cite{BOPY} and gives rise to Gromov-Witten invariants
\begin{equation} \mathsf{N}^A_{g,(d_1, d_2, d_3)} = \int_{[ \Mbar_{g}(A, \beta)/A ]^{\text{3-red}} } 1 \label{aaa} \end{equation}
counting genus $g$ curves in $A$ of class $\beta$ \emph{up to translation}.

In genus $3$, the counts $\mathsf{N}^A_{3,(d_1, d_2, d_3)}$ reduce
to a lattice count in abelian groups \cite{Debarre, Gottsche, LS}
A full formula for $\mathsf{N}^A_{g,(d_1, d_2, d_3)}$ in case $d_1 = 1$
was recently conjectured in \cite{BOPY} based on new calculations
of the Euler characteristic of the Hilbert scheme of curves in $A$.
The following result verifies this conjecture in case $d_1 = d_2 = 1$.

\begin{thm} \label{abelianthm}
\[ \sum_{d = 0}^{\infty} \sum_{g=2}^{\infty} \mathsf{N}^A_{g,(1,1,d)} u^{2g-2} q^d \, = \, \Theta(u,q)^2 \]
\end{thm}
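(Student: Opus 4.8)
The plan is to deduce the theorem from the relative theory developed in this paper by specializing the general abelian threefold to a product and then degenerating the elliptic direction. First I would use deformation invariance of the $3$-reduced virtual class \cite{BOPY} to replace an arbitrary abelian threefold $A$ of type $(1,1,d)$ by the product $A = A_2 \times E$, where $A_2$ is a principally polarized abelian surface with genus~$2$ theta curve $C$, and $E$ is an elliptic curve, carrying the curve class $[C] + d[E]$. One checks this class is of type $(1,1,d)$, so that $\mathsf{N}^A_{g,(1,1,d)}$ is unchanged. This mirrors the reduction of $S \times E$ invariants underlying Theorem~\ref{Theorem_K3xE}, with the roles of the surface class and the $E$-degree interchanged: here $A_2$ is fixed at the principal polarization while $d$ varies.

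Next I would degenerate $E$ to a nodal rational curve $\p^1/(0 \sim \infty)$ and apply the degeneration formula for the reduced theory, set up exactly as for $S \times E$ in \cite{K3xE}. This expresses $\mathsf{N}^A_{g,(1,1,d)}$ through the relative Gromov--Witten invariants of $A_2 \times \p^1$ relative to the two fibers $A_2^0, A_2^\infty$ in class $([C],d)$, glued along the diagonal, with the $E$-degree $d$ becoming the degree over $\p^1$. The generating series in $d$ and the genus then assumes the form of a trace over relative boundary conditions, which is the natural mechanism producing the theta-function dependence.

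The core computation is the relative theory of $A_2 \times \p^1$. Here I would exploit the translation action of $A_2$ together with the rigidity of the genus~$2$ theta curve, so that only maps whose $A_2$-image is a translate of $C$ contribute. Degenerating $A_2$ further to the product $E_1 \times E_2$, the theta curve degenerates to the coordinate cross $E_1 \cup E_2$ (arithmetic genus~$2$), and the problem reduces to a configuration on $E_1 \times E_2 \times \p^1$ assembled from two elliptic factors. Each factor contributes one copy of $\Theta(u,q)$, and the square in the statement reflects precisely the two branches $E_1, E_2$ of the degenerate theta curve. As a consistency check, this predicts $\mathsf{N}^A_{2,(1,1,d)} = \delta_{d,0}$ (the unique theta curve up to translation at $d=0$), matching the $u^2$-coefficient $1$ of $\Theta(u,q)^2$.

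Finally I would assemble the contributions into $\sum_{g,d}\mathsf{N}^A_{g,(1,1,d)}u^{2g-2}q^d$ and identify it with $\Theta(u,q)^2$. If the explicit evaluation of the triple-elliptic-product configuration is available this is immediate; otherwise I would argue structurally that the series is a quasi-Jacobi form of index~$1$ and weight~$-2$, hence lies in the finite-dimensional space $\QJac_{-2,1}$, and then pin it down using the base value $\mathsf{N}^A_{2,(1,1,0)}=1$ together with the genus~$3$ lattice counts of \cite{Debarre, Gottsche, LS}, which must reproduce the first two Taylor coefficients $1$ and $2C_2(q)$ of $\Theta(u,q)^2$ in $u$. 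I expect the main obstacle to be the relative $A_2 \times \p^1$ evaluation combined with the compatibility of the $3$-reduced class with the degeneration of $E$: one must verify that the reduction survives the degeneration, so that the gluing formula is applied to the reduced rather than the standard virtual classes, and that no extra reduced summand is lost in the nodal limit.
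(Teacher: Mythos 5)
Your proposal does not reach a proof, and the gap is concentrated in one structural choice: you degenerate the \emph{degree-$d$} factor $E$ of $A_2 \times E$. After that step the degeneration formula expresses the answer as a trace $\sum_{\eta} \Fz(\eta)\,\langle \eta, \ldots, \eta^{\vee}\rangle$ over \emph{all} cohomology-weighted partitions $\eta$ of size $d$ of the relative theory of $A_2 \times \p^1$ in degree $d$ over $\p^1$, for every $d$. This is not a reduction to something known: the full degree-$d$ relative matrix is unavailable (this paper itself can only solve degree $\leq 2$, and only for K3), and it contains far more information than the single series you want. Neither of your two ways of finishing closes this. The claim that after degenerating $A_2$ to $E_1 \times E_2$ ``each factor contributes one copy of $\Theta(u,q)$'' is a heuristic, not a computation — nothing in this paper or in \cite{BOPY} evaluates that configuration. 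And the fallback assertion that the generating series lies in $\QJac_{-2,1}$ is precisely the conjectural Jacobi-form structure that needs proof; the paper's formal machinery (Proposition \ref{formel_prop} and Lemma \ref{formel_prop_strenthening}) applies only because, for K3 geometries, properties (a)--(c) are verified through \cite{BOPY}-quasimodularity, the GW/Pairs correspondence, and the Ciliberto--Knutsen bound. No analogue of any of these inputs is available for abelian threefolds, so membership in $\QJac_{-2,1}$ is an assumption equivalent in strength to the theorem.

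There is also a missing step before any degeneration formula can be invoked: $\mathsf{N}^A_{g,(1,1,d)}$ is an integral over the $3$-reduced class of the translation quotient $\Mbar_g(A,\beta)/A$, and one must first trade the quotient for insertions via \cite[Lemma 18]{BOPY}, $\mathsf{N}^A_{g,(1,1,d)} = \frac{1}{2}\blangle \tau_0(\pt)\,\tau_0(H_1H_2) \brangle^{A,\,3\text{-red}}_{g,(1,1,d)}$, so that one is degenerating honest invariants of $A$; you never perform this conversion, and you only flag (rather than resolve) the compatibility of the reduced class with the nodal limit. The paper's proof avoids all of these difficulties by degenerating a \emph{degree-one} factor $E_1$ of $E_1 \times E_2 \times E_3$: the resulting class has degree $1$ over $\p^1$, so the relative/gluing conditions are single diagonal-splitting classes with no partition structure and no trace, and after a base degeneration and localization everything collapses onto the two $\p^1 \times E$ evaluations of Lemma \ref{riogkegfr} and \eqref{assd} — which were established earlier from the K3 side (Yau--Zaslow via Proposition \ref{vanishing1}, together with the Katz--Klemm--Vafa formula). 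That is where $\Theta(u,q)^2$ actually comes from. Your genus-$2$ and genus-$3$ consistency checks are correct but are checks, not the evaluation.
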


  An interesting question is to explore the enumerative significance of Theorem \ref{abelianthm}.
  Define BPS numbers $\mathsf{n}_{g, (1, d, d')}$ by the expansion
  \[ \sum_{g} \mathsf{n}_{g, (1,d,d')} (2 \sin(u/2))^{2g-2} = \sum_{g \geq 0} \mathsf{N}^A_{g,(1,d,d')} u^{2g-2} \,. \]
  Then it is natural to ask: If $A$ is a generic abelian threefold carrying a curve class $\beta$ of type $(1,d,d')$,
  do there exist only finitely many isolated curves of genus $g$ in class $\beta$ up to translation?
  Is every such curve non-singular? If both questions can be answered affirmative,
  the BPS numbers $\mathsf{n}_{g, (1,d,d')}$ are enumerative.

\subsection{Plan of the paper}
In Section~\ref{Section_The_bracket_notation} we review a bracket notation for Gromov-Witten invariants.
In Section~\ref{Section:First_vanishing} we exploit a basic evaluation
of Gromov-Witten invariants of $S \times \p^1$ in classes $(\beta_h,1)$,
leading to a proof of Theorem~\ref{abelianthm} on abelian threefolds.
In Section~\ref{Section_Formal_series_of_quasimodular_forms} we prove a uniqueness
statement for formal series of quasi-modular forms.
Section~\ref{Section:Genus_induction} is the heart of the paper:
here we apply the result of Ciliberto and Knutsen on hyperelliptic curves in K3 surface
to calculate a key generating series of Gromov-Witten invariants of $S$.
In Section~\ref{Section:Relative_Invariants_of_P1K3}
we apply standard techniques to solve for the relative Gromov-Witten theory
of $S \times \p^1$ in degrees $d=1$ and $d=2$.
As a result we obtain the GW/Hilb correspondence (Theorem \ref{GWHilb_thm})
and Theorem \ref{Theorem_K3xE} on the Gromov-Witten theory of the $S \times E$.

\subsection{Acknowledgements}
I would like to thank Davesh Maulik for interesting discussions and technical assistence,
and Jim Bryan, Tudor Padurariu, Rahul Pandharipande,
Aaron Pixton, Martin Raum, Junliang Shen, and Qizheng Yin
for discussions about counting curves in K3 geometries.
I am also very grateful to the referees for a careful reading of the manuscript and their comments.
A great intellectual debt is owed to the paper \cite{MPT} by Maulik, Pandharipande and Thomas,
where many of the techniques used here were developed.

\section{The bracket notation} \label{Section_The_bracket_notation}
Let $X$ be a smooth projective variety and let $\beta \in H_2(X,\BZ)$ be a curve class.
We will denote connected Gromov-Witten invariants of $X$ by the
bracket notation
\begin{equation} \label{def_gw}
\Big\langle \, \alpha \, ; \, \tau_{k_1}(\gamma_1) \cdots \tau_{k_n}(\gamma_n) \Big\rangle^X_{g, \beta}
\ = \
\int_{[ \Mbar_{g,n}(X ,\beta) ]^{\text{vir}}} \alpha \cup \prod_{i=1}^{n} \ev_i^{\ast}(\gamma_i) \psi_i^{k_i},
\end{equation}
where
\begin{itemize}
 \item $\Mbar_{g,n}(X,\beta)$ is the moduli space of connected $n$-marked stable maps of genus $g$ and class $\beta$,
 \item $\gamma_1, \ldots, \gamma_n \in H^{\ast}(X)$ are cohomology classes,
 \item $\alpha$ is a cohomology class on $\Mbar_{g,n}(X,\beta)$, usually taken to be the pullback
 of a \emph{tautological} class \cite{FP13} under the forgetful map $\Mbar_{g,n}(X,\beta) \to \Mbar_{g,n}$
 to the moduli space of curves.
\end{itemize}
If the obstruction sheaf on $\Mbar_{g,n}(X, \beta)$ admits a trivial
quotient obtained from a holomorphic $2$-form on $X$,
the integral in \eqref{def_gw} is assumed to be over the \emph{reduced} virtual class.
For abelian threefolds we will use the $3$-reduced virtual class \cite{BOPY}.
The parallel definition of \eqref{def_gw} for disconnected invariants is denoted by attaching the superscript $\bullet$
to the bracket and the moduli spaces.

Let $\BE \to \Mbar_{g,n}(X, \beta)$ (resp. $\BE \to \Mbar_{g,n}^{\bullet}(X, \beta)$) be the Hodge bundles with fiber $H^0(C, \omega_C)$ over the moduli point $[f : C \to X]$.
The total Chern class of the dual of $\BE$,
\[ \BE^{\vee}(1) = c(\BE^{\vee}) = 1 - \lambda_1 + \ldots + (-1)^g \lambda_g, \]
is often used for the insertion $\alpha$. 

We extend the bracket \eqref{def_gw} by multilinearity in the insertions.
Since for dimension reasons only finitely many terms contribute, the formal expansion
\[ \frac{\gamma}{1 - \psi_i} = \sum_{k=0}^{\infty} \tau_k(\gamma), \quad \gamma \in H^{\ast}(X) \,. \]
is well-defined.

Assume that $X$ admits a fibration
\[ \pi : X \to \p^1 \]
and let $X_0, X_\infty$ be the fibers of $\pi$ over the points $0, \infty \in \p^1$.
We will use the standard bracket notation
\[
\Big\langle \ \mu\ \Big| \ \alpha\, \prod_{i} \tau_{k_i}(\gamma_i) \ \Big| \ \nu \ \Big\rangle^{X}_{g, \beta}
 = \int_{ [\overline{M}_{g,n}( X /\{ X_0, X_\infty\}, \beta)_{\mu, \nu}]^{\text{vir}} } \alpha\, \cup\, \prod_{i}  \psi_i^{k_i} \ev_i^{\ast}(\gamma_i)
\]
for the Gromov-Witten invariants of $X$ relative to the fibers $X_0$ and $X_{\infty}$.
The integral is over the moduli space of stable maps
 \[ \overline{M}_{g,n}( X /\{ X_0, X_{\infty} \},\, \beta) \]
relative to the fibers over $0,\infty \in \p^1$ in class $\beta$.
Here, $\mu$ and $\nu$ are unordered cohomology weighted partitions,
weighted by cohomology classes on $X_0$ and $X_{\infty}$ respectively\footnote{We follow the convention of Section~\ref{Section:Relative_Gromov_Witten_theory_of_P1K3}
or equivalently of \cite{MP}.}.
The integrand contains the cohomology class $\alpha$ and the descendents.
Again, we use a \emph{reduced} virtual class whenever possible.

We will form generating series of the absolute and relative invariants
above. Throughout we will use the following conventions:

In K3 geometries we assign to a primitive class $\beta_h$ of norm $\langle \beta_h , \beta_h \rangle = 2h-2$
the variable $q^{h-1}$.
The $d$-times multiple of the fundamental class of an elliptic curve (in a trivial elliptic fibration) will correspond to $q^d$.
For absolute invariants the genus $g$ Gromov-Witten invariant in class $\beta$
will be weighted by the variable
\[ u^{2g - 2 + \int_\beta c_1(X)}. \]
For relative invariants with relative conditions specified by cohomology weighted partitions $\mu_1, \dots, \mu_k$
we will use
\[ u^{2g-2+\int_\beta c_1(X) + \sum_{i=1}^{k} l(\mu_i) - |\mu_i|} \,. \]
For example, in case of the elliptically fibered K3 surface $S$ with curve classes $\beta_h = B + hF$ we will use
\begin{multline} \label{Generating_Series_K3_Surfaces}
\Big\langle \alpha ; \tau_{k_1}(\gamma_1) \cdots \tau_{k_n}(\gamma_n) \Big\rangle^{S}
= \sum_{g \geq 0} \sum_{h \geq 0} \big\langle \alpha ; \tau_{k_1}(\gamma_1) \cdots \tau_{k_n}(\gamma_n) \big\rangle^S_{g, \beta_h} u^{2g-2} q^{h-1} \,.
\end{multline}

\section{Calculations in degree $1$} \label{Section:First_vanishing}
\subsection{Overview}
We evaluate a special Gromov-Witten invariant on $\text{K3} \times \p^1$
in class $(\beta_h,1)$.
By the Katz-Klemm-Vafa formula this leads to a proof of Theorem \ref{abelianthm}.

\subsection{Evaluation} \label{Subsection_Proof_of_Theorem_1}
Let $S$ be a K3 surface, let $\beta_h \in H_2(S, \BZ)$ be a primitive curve
class satisfying $\langle \beta_h, \beta_h \rangle =2h-2$ and let
\[ F \in H^2(S,\BZ) \]
be a class satisfying $F \cdot \beta_h = 1$ and $F \cdot F = 0$.

Let $\omega \in H^2(\p^1)$ be the class of a point, and let
\[ F \boxtimes \omega = \pi_1^{\ast}(F) \cup \pi_2^{\ast}(\omega) \in H^4(S \times \p^1) \]
where $\pi_i$ is the projection from $S \times \p^1$ to the $i$th factor.
Consider the connected Gromov-Witten invariant
\begin{equation} \blangle \tau_0(F \boxtimes \omega)^3 \brangle_{g,(\beta_h, 1)}^{S \times \p^1}
 = \int_{[ \Mbar_{g,3}(S \times \p^1, (\beta_h, 1)) ]^{\text{red}}}
 \prod_{i=1}^{3} \ev_i^{\ast}(F \boxtimes \omega) \,.
\label{rwerw} \end{equation}

\begin{prop} \label{vanishing1} For every $h \geq 0$, we have
\[
\blangle \tau_0(F \boxtimes \omega)^3 \brangle_{g,(\beta_h, 1)}^{S \times \p^1}
=
\begin{cases}
\left[ \frac{1}{\Delta(q)} \right]_{q^{h-1}} & \text{ if } g = 0 \\
0 & \text{ if } g > 0 \,,
\end{cases}
\]
where $[\ \cdot\ ]_{q^{n}}$ denotes extracting the $n$-th coefficient.
\end{prop}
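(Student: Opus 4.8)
The plan is to compute the genus-$0$ invariant directly and then establish the vanishing in positive genus via a dimension/degeneration argument combined with the known K3 reduced theory. The key structural fact is that the class $(\beta_h,1)$ is degree $1$ over $\p^1$, so a stable map in this class is essentially a section of the projection $S \times \p^1 \to \p^1$ glued to vertical components mapping into fibers. This rigidity is what makes the invariant tractable.

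\textbf{Step 1: Reduce to the K3 surface.} First I would localize or degenerate with respect to the projection $\pi_2 : S \times \p^1 \to \p^1$. Since the class is $(\beta_h,1)$, the map has degree $1$ onto $\p^1$, so the domain has a unique rational ``horizontal'' component isomorphic to $\p^1$ via $\pi_2$, with the remaining genus carried by contracted or vertical components supported in fibers $S_z$. The three point insertions $F \boxtimes \omega$ each pull back the point class $\omega$ on $\p^1$, which pins the three marked points to three specified fibers $S_{z_1}, S_{z_2}, S_{z_3}$ and simultaneously pulls back $F$ on the K3 factor. The effect of integrating the three $\omega$-factors is to rigidify the $\p^1$-direction completely, reducing \eqref{rwerw} to an integral over a moduli space of maps to $S$ in class $\beta_h$ with three $F$-insertions. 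Concretely I expect
\[
\blangle \tau_0(F \boxtimes \omega)^3 \brangle_{g,(\beta_h, 1)}^{S \times \p^1}
= \big\langle \tau_0(F)^3 \big\rangle^{S}_{g,\beta_h}
\]
up to a combinatorial constant, where the right side is a \emph{reduced} K3 invariant with three $F$-insertions.

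\textbf{Step 2: Evaluate the resulting K3 invariant.} The reduced genus-$g$ invariant $\langle \tau_0(F)^3\rangle^S_{g,\beta_h}$ is governed by the Katz–Klemm–Vafa formula. The reduced virtual dimension of $\Mbar_{g,3}(S,\beta_h)$ is $1 + 3 = 4$ (real dimension counting: reduced dimension $g+1$ plus marked points contribute), and the three divisor insertions $F$ each cut down by $1$; I would verify the dimension matches so that the integral is a number. The generating series $\sum_h \langle \tau_0(F)^3\rangle^S_{g,\beta_h} q^{h-1}$ is, by the full KKV evaluation (as in \cite{MPT}), concentrated in genus $0$ with value $1/\Delta(q)$. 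More precisely, the three $F$-classes together with the divisor equation reduce the count to the basic reduced invariant whose generating function is the inverse discriminant, and the higher-genus contributions vanish because the reduced Gromov–Witten theory of a K3 surface with purely divisorial/point insertions localizes onto genus $0$ after the KKV change of variables. This simultaneously gives the $g=0$ value $[1/\Delta(q)]_{q^{h-1}}$ and the vanishing for $g>0$.

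\textbf{The hard part} will be justifying the reduction in Step 1 rigorously: one must control the reduced virtual class under the degeneration of the $\p^1$-factor and verify that the vertical/contracted components do not contribute spuriously, which requires a careful analysis of how the reduced obstruction theory (coming from the holomorphic symplectic form on $S$, pulled back to $S\times\p^1$) restricts to the boundary strata. The subtlety is that the reduced class is not multiplicative under the product, so one cannot naively invoke a product formula; instead I expect to use the degeneration formula relative to a fiber $S_z$ together with the fact that the $\omega$-insertions force all three markings into fibers, effectively trivializing the $\p^1$-geometry and identifying the contribution with a single reduced K3 vertex. Once this identification is secured, Step 2 is a direct appeal to the established KKV-type evaluation and the positive-genus vanishing follows from the genus-$0$ concentration of that series.
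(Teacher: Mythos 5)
There is a genuine gap, and it sits in both steps. Your Step 1 reduction cannot be right as stated: $\BC^{\ast}$-localization on the $\p^1$-factor does \emph{not} produce the plain reduced invariant $\langle \tau_0(F)^3\rangle^S_{g,\beta_h}$. The genus of the domain has to go somewhere, and in the localization formula it is carried by components contracted into the fixed fibers; these contribute Hodge classes (from the obstruction bundle) and $\psi$-classes (from smoothing the node with the horizontal line), so the correct output is the Hodge integral
\[
\blangle \tau_0(F \boxtimes \omega)^3 \brangle_{g,(\beta_h, 1)}^{S \times \p^1}
= \Big\langle\, \BE^{\vee}(1) \, \frac{F}{1 - \psi_1} \Big\rangle^S_{g, \beta_h}\,,
\]
exactly as in the proof of Proposition \ref{Proposition_Thm1m=0}. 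A quick sanity check shows your version must be wrong for $g>0$: the reduced virtual dimension of $\Mbar_{g,3}(S,\beta_h)$ is $g+3$, so $\langle\tau_0(F)^3\rangle^S_{g,\beta_h}$ vanishes for $g>0$ for trivial dimension reasons, whereas the statement being proved is genuinely nontrivial. Step 2 then compounds the problem: the Katz--Klemm--Vafa formula gives $\langle \BE^{\vee}(1)\rangle^S = 1/(\Theta(u,q)^2\Delta(q))$, which has nonzero coefficients in \emph{every} genus --- there is no ``genus-$0$ concentration'' in the reduced theory of $S$ with divisorial insertions and Hodge classes. In fact the vanishing of $\langle \BE^{\vee}(1)\,\tfrac{F}{1-\psi_1}\rangle^S_{g,\beta_h}$ for $g>0$ is, via the localization above, \emph{equivalent} to Proposition \ref{vanishing1}; invoking it (or KKV) to prove the proposition is circular.

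The paper's proof is instead a direct geometric one, and its essential content is a multiplicity computation that your proposal skips entirely. One takes $S$ generic, $\beta_h$ irreducible, and cycles $F_i \times x_i$ with $F_1,F_2,F_3$ disjoint; any curve in class $(\beta_h,1)$ meeting all three is then irreducible, reduced, nonsingular of degree $1$ over $\p^1$, hence the graph of a map $\p^1 \to S$ onto a rational curve in class $\beta_h$, and the incidence conditions kill the $\Aut(\p^1)$-freedom. So the relevant curves are isolated and in bijection with rational curves on $S$ in class $\beta_h$, counted by Yau--Zaslow as $[1/\Delta(q)]_{q^{h-1}}$. The remaining issue --- the one your dimension/KKV reasoning cannot see --- is that an isolated rigid curve still contributes to higher-genus Gromov--Witten invariants through degenerate maps with contracted components, and these contributions are generically nonzero. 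The paper rules them out by passing to reduced stable pairs (arguments parallel to \cite[Prop.~5]{K3xE}, the contribution calculation of \cite[Sec.~4.2]{PT1} adapted to the reduced setting of \cite{MPT}), concluding each isolated curve contributes $1$ in genus $0$ and $0$ in all higher genera. Without this (or some substitute) your argument establishes only the genus-$0$ value, not the vanishing for $g>0$.
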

\begin{proof}
We may take $S$ to be generic and $\beta_h$ to be irreducible.
Let $F_i$, $i=1,2,3$ be generic distinct smooth submanifolds of class $F$ which intersect all rational curves in
class $\beta_h$ transversely in a single point. Let also $x_1, x_2, x_3$ be distinct points in $\p^1$.
The products
\[ F_i \times x_i \, \subset \, S \times \p^1, \quad i=1,2,3 \]
have class $F \boxtimes \omega$.

Consider an algebraic curve $C \subset S \times \p^1$ in class $(\beta_h, 1)$ incident to $F_i \times x_i$ for all $i$.
Since $F_i \cap F_j = \varnothing$ for $i \neq j$,
the curve $C$ is irreducible and reduced.
Because the projection $C \to \p^1$ is of degree~$1$ the curve
$C$ is non-singular. Since irreducible rational curves on K3 surfaces
are rigid,
the only deformations of $C$ in $S \times \p^1$ are
by translations by automorphisms of $\p^1$. The incidence
conditions $F_i \times x_i$ then select precisely one member of each translation class.

We find curves in class $(\beta_h,1)$ incident to all $F_i \times x_i$ are in $1$-to-$1$ correspondence
with rational curves on $S$ in class $\beta_h$.
By the Yau-Zaslow formula proven in \cite{BL, Bea99, Chen} there are precisely
\[ \left[ \frac{1}{\Delta(q)} \right]_{q^{h-1}} \]
such curves. It remains to calculate their contribution to \eqref{rwerw}.

By arguments parallel to the proof of \cite[Proposition 5]{K3xE}
the generating series of \eqref{rwerw} over all $g$ is related to
the generating series of reduced stable pair invariants of $S \times \p^1$
in class $(\beta_h, 1)$ with incidence conditions $F_i \times x_i, i=1,2,3$.
The contribution of the isolated curve $C \subset S \times \p^1$ to the stable pair invariant is
obtained from a direct modification of the calcation in \cite[Section 4.2]{PT1} to the reduced setting \cite{MPT}.
Translating back to Gromov-Witten theory we find each curve $C$ contributes $1$ in genus $0$ and $0$ otherwise.
This concludes the proof.
\end{proof}

\subsection{Relative theory of $\p^1 \times E$} \label{Section_Relative_Theory_of_P1E}
Let $E$ be an elliptic curve and consider the curve class
\[ (1,d) = \iota_{\p^1 \ast}( [ \p^1 ] ) + \iota_{E \ast}( d[E] ) \in H_2(\p^1 \times E, \BZ) \]
where $\iota_{\p^1}, \iota_{E}$ are the inclusion of fibers
of the projections to the second and first factor respectively.
We will use the generating series of relative invariants of $\p^1 \times E$,
\begin{multline} \label{gen_series_p1e}
\Big\langle \ \mu \ \Big| \ \alpha\, \prod_{i} \tau_{a_i}(\gamma_i) \ \Big| \ \nu \ \Big\rangle^{\p^1 \times E}
=\ \sum_{g \geq 0} \sum_{d \geq 0}
\Big\langle \ \mu \ \Big| \ \alpha\, \prod_{i} \tau_{a_i}(\gamma_i) \ \Big| \ \nu \ \Big\rangle^{\p^1 \times E}_{g, (1,d)}
\, u^{2g}q^d \,.
\end{multline}
Since the class $(1,d)$ is of degree $1$ over $\p^1$, the
relative insertions $\mu$ and $\nu$ are cohomology classes on the fibers:
\[ \mu \in H^{\ast}(0\times E) \quad \text{ and } \quad \nu \in H^{\ast} (\infty \times E) \,. \]
Similar definitions apply also to the case of a single relative divisor.

\begin{lemma} \label{P1xE_Lemma} \hfill
\begin{enumerate}
 \item[(a)] The series \eqref{gen_series_p1e} vanishes unless 
 \[ \deg_{\BR}(\mu) + \deg_{\BR}(\nu) \leq 2 \,, \]
 where $\deg_{\BR}(\gamma)$ denotes the real degree of $\gamma$. 
 \item[(b)] We have $\blangle\, \omega\, |\, \hodge \brangle^{\p^1 \times E}
 = \blangle\, \e\, |\, \hodge \tau_0(\pt) \brangle^{\p^1 \times E}
 = 1$.
 \item[(c)] Let $D = q \frac{d}{dq}$. Then,
 \[ \blangle\, \omega\, |\, \hodge \tau_0(\pt) \, \brangle^{\p^1 \times E} = \frac{D \Theta(u,q)}{\Theta(u,q)} \,. \]
\end{enumerate}
\end{lemma}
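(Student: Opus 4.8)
The plan is to exploit that the class $(1,d)$ has degree one over $\p^1$, which strongly rigidifies the relevant stable maps. First I would observe that any stable map of class $(1,d)$ relative to $X_0$ and $X_\infty$ has a unique \emph{horizontal} component $C_0\cong\p^1$ mapping isomorphically onto $\p^1$; since every map $\p^1\to E$ is constant, $C_0$ maps into a single fiber $\p^1\times\{e_0\}$ of $\pi_E$, while the remaining components are contracted by $\pi_{\p^1}$ and cover the fibers $\{p\}\times E\cong E$, carrying the class $d[E]$. The crucial consequence is that both relative points lie on $C_0$ at the same $E$-coordinate, so the two relative evaluation maps agree: $\ev^0=\ev^\infty=:\ev_E\colon \overline M\to E$. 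This identity persists over all boundary and expanded-target strata, because the normal bundle of each relative fiber is trivial and the horizontal section extends through the bubbles at constant $E$-coordinate.

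Part (a) then follows immediately: the relative insertions contribute $\ev^{0\ast}(\mu)\cup\ev^{\infty\ast}(\nu)=\ev_E^\ast(\mu\cup\nu)$, and since $H^k(E)=0$ for $k>2$ the product $\mu\cup\nu$ vanishes whenever $\deg_\BR(\mu)+\deg_\BR(\nu)>2$.

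For part (b) I would first match degrees: in both brackets the only term of $\hodge$ of the correct codimension is $(-1)^g\lambda_g$. When $d>0$ the pullback of the holomorphic one-form $\pi_E^\ast(dz)$ is a nowhere-vanishing section of the Hodge bundle $\BE$ (nonzero because some component covers $E$), forcing $\lambda_g=0$ and killing all $d>0$ contributions. When $d=0$ the map lands in a single fiber $\p^1\times\{e_0\}$, and the product structure identifies the $E$-normal obstruction with $\BE^\vee\otimes T_{e_0}E$, contributing the Euler class $(-1)^g\lambda_g$; together with the $\lambda_g$ coming from $\hodge$ this gives $\lambda_g^2=0$ for $g>0$ by Mumford's relation. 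Only the $(g,d)=(0,0)$ term survives, where a single rigid section meeting the prescribed point condition over $E$ contributes $1$; this yields both evaluations in (b).

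Part (c) is the substantive one. Here the matching term of $\hodge$ is $(-1)^{g-1}\lambda_{g-1}$, the condition $\omega$ fixes $e_0$, and $\tau_0(\pt)$ forces the interior point onto a vertical cover through a fixed point of $E$. The $d=0$ sector vanishes (two incompatible point conditions on $e_0$), consistent with $D\Theta/\Theta$ having no constant term in $q$. For $d>0$ the computation reduces, after discarding the rigid section, to a $\lambda_{g-1}$-twisted stationary one-point series of the Gromov-Witten theory of the elliptic curve $E$. The hard part will be evaluating this series and identifying it with $D\Theta(u,q)/\Theta(u,q)=\sum_{k\ge1}(-1)^{k-1}(DC_{2k})u^{2k}$; I would carry this out by using the nowhere-vanishing section $\pi_E^\ast(dz)$ to split off a trivial line from $\BE$, so that $\lambda_{g-1}$ becomes the top Chern class of the rank $g-1$ quotient, and then invoking the known evaluation of the Hodge-twisted stationary invariants of $E$ as quasimodular forms in the style of Okounkov--Pandharipande, whose one-point generating function is precisely the logarithmic $q$-derivative of the theta function.
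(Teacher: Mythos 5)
Parts (a) and (b) of your proposal are correct. For (a) your argument is the paper's own: a curve of class $(1,d)$ is a horizontal section plus vertical fibers, so the intersection points with the two relative divisors coincide, and your check that this persists on expanded targets is right (vertical components cannot map into the relative divisors, so the horizontal chain glues at a constant $E$-coordinate). For (b) the paper simply cites \cite[Lemma 24]{MPT}, whereas you give the direct argument: for $d>0$ the pullback of the holomorphic one-form on $E$ is a nowhere-vanishing section of $\BE$, killing $\lambda_g$; for $d=0$ the moduli space is a product, the $E$-direction contributes the obstruction bundle $\BE^{\vee}\otimes T_{e_0}E$ with Euler class $(-1)^g\lambda_g$, and Mumford's relation $\lambda_g^2=0$ kills all $g>0$, leaving the count $1$ from $(g,d)=(0,0)$. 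This is sound, and is essentially a self-contained derivation of the cited lemma.

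The genuine gap is in (c), exactly where you write "the hard part will be evaluating this series." The paper's proof of (c) is the citation \cite[Lemma 26]{MPT}: that lemma \emph{is} the content. Your plan defers this content to "the known evaluation of the Hodge-twisted stationary invariants of $E$ \dots whose one-point generating function is precisely the logarithmic $q$-derivative of the theta function," but no such result exists in the Okounkov--Pandharipande literature you invoke: the results of \cite{OP3} determine the stationary descendent theory of $E$ \emph{without} Hodge insertions, and $\Theta$ never appears there. Moreover the reduction is not to a one-point series: dimension counting shows $\langle\lambda_{g-1}\tau_0(\omega)\rangle^{E}_{g,d}$ is dimensionally inconsistent for $g\geq 2$ (one needs $\tau_{g-1}(\omega)$), and the honest localization output --- carried out in Appendix~\ref{Appendix_K3} of the paper --- is a sum, weighted by rubber factors $P(i,\ell)$, of two-point integrals $\langle\tau_a(\omega)\tau_0(\omega)\lambda_j\lambda_k\rangle^{E}$. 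Removing Hodge classes via Mumford's relations and then applying \cite{OP3} gives an effective genus-by-genus algorithm, but, as the author remarks in that appendix, closed formulas seem difficult to obtain this way; your idea of splitting a trivial line off $\BE$ (so $\lambda_{g-1}$ becomes the Euler class of the quotient) is a genuine simplification but evaluates nothing by itself. What (c) asserts is the closed form
$\langle\,\omega\,|\,\lambda_{g-1}\tau_0(\pt)\,\rangle^{\p^1\times E}_{g,(1,d)}=\frac{2}{(2g)!}\,d\,\sigma_{2g-1}(d)$ for all $g\geq 1$, $d\geq 1$, where $\sigma_{2g-1}(d)=\sum_{k\mid d}k^{2g-1}$; proving this for all genera is the actual mathematical content, and your proposal assumes it. To close the gap you would either have to cite \cite[Lemma 26]{MPT} directly (as the paper does) or supply an independent evaluation of the $\lambda_{g-1}$-twisted series.
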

\begin{proof} (a) follows since a curve $C \subset \p^1 \times E$ in class $(1,d)$ is of the form
\[ (\p^1 \times e)\, +\, D \]
where $e \in E$ is a fixed point and $D$ is a fiber
of the projection $\p^1 \times E \to \p^1$.
Hence for every relative stable map $f$ to $\p^1 \times E / \{ 0, \infty \}$
the intersection point over $0$ and over $\infty$ agree,
which implies the claim (for example choose cycles representing $\mu$ and $\nu$).
Part (b) is \cite[Lemma 24]{MPT} and
part (c) follows from \cite[Lemma 26]{MPT}.
\end{proof}

\subsection{Fiber integrals}
Let $S$ be the elliptically fibered K3 surface with curve class $\beta_h = B+  hF$
where $B,F$ are the section and fiber class respectively.
Recall also the notation \eqref{Generating_Series_K3_Surfaces}.
\begin{prop} \label{Proposition_Thm1m=0}
$\displaystyle \Big\langle\, \BE^{\vee}(1) \ \prod_{i=1}^{n} \frac{F}{1-\psi_i} \Big\rangle^{S}
=
\frac{1}{u^{2n}} \frac{\Theta(u,q)^{2n}}{\Theta(u,q)^2 \Delta(q)}
$
\end{prop}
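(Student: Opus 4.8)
The plan is to evaluate the fiber integral by degenerating the elliptic K3 surface and peeling the descendent insertions $\frac{F}{1-\psi_i}$ one at a time onto $\p^1\times E$ bubbles, taking the Katz--Klemm--Vafa formula of \cite{MPT} as the base case and using Lemma \ref{P1xE_Lemma} to supply the theta factors. Since the target is a product, I read it as a Katz--Klemm--Vafa core times $n$ identical factors $\Theta(u,q)^2/u^2$, because
\[
\frac{1}{u^{2n}}\frac{\Theta^{2n}}{\Theta^2\Delta}=\Big(\frac{\Theta^2}{u^2}\Big)^{n}\cdot\frac{1}{\Theta^2\Delta}.
\]
For $n=0$ the statement is exactly the primitive Katz--Klemm--Vafa evaluation $\langle\BE^{\vee}(1)\rangle^{S}=\frac{1}{\Theta^2\Delta}$ proved in \cite{MPT} (one checks the leading coefficients against the single rigid section $B$: the $q^{-1}u^{-2}$ term is $1$ and the $q^{-1}u^{0}$ term is $\tfrac{1}{12}$, matching the Gopakumar--Vafa expansion of one rational curve). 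It therefore suffices to show that each additional factor $\frac{F}{1-\psi}$ multiplies the generating series by $\Theta^2/u^2$.

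For the inductive step I would realize $F$ by a smooth fiber $E\subset S$ of the elliptic fibration and apply deformation to the normal cone of $E$. Since $N_{E/S}\cong\mathcal{O}_E$, the special fiber is $S\cup_E(\p^1\times E)$, with the components glued along $E=\{\infty\}\times E$ and general fiber $S$; the holomorphic symplectic form of $S$ restricts to zero on $E$, so it survives only on the $S$-component. By deformation invariance of the reduced class and the reduced degeneration formula of \cite{MPT}, the left hand side becomes a sum over splittings of the class $\beta_h$, the genus, and the matching profile over $E$, of products of a reduced relative invariant of $S/E$ with an ordinary relative invariant of $(\p^1\times E)/E$. The reducedness thus stays on the $S$-side, where the induction hypothesis applies, while the $\p^1\times E$-bubble carries the ordinary theory governed by Lemma \ref{P1xE_Lemma}. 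The Hodge class $\BE^{\vee}(1)$ splits multiplicatively across the normalization by the standard product formula for the Hodge bundle.

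The mechanism that moves the new marked point into the bubble is the resummed descendent itself: writing $\frac{F}{1-\psi}=\sum_{k\geq 0}\tau_k(F)$ and using the descendent--rubber correspondence, the $\psi$-powers convert into rubber integrals over $\p^1\times E$ relative to $E$. Because $\beta_h\cdot F=1$, the curve meets $E$ in a single point, so the matching profile is the one-part partition; the $S$-side then reassembles into the $(n-1)$-insertion series of the induction hypothesis (the single relative point of profile $(1)$ being absorbed), and all the $\psi$-dependence is pushed into the bubble. It remains to sum the rubber contributions over bubble genus and fiber degree, weighted by $u^{2g}q^{d}$, and to check that the total equals $\Theta^2/u^2$. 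Here the building block is the one-point series $\langle\,\omega\,|\,\BE^{\vee}(1)\tau_0(\pt)\,\rangle^{\p^1\times E}=\frac{D\Theta}{\Theta}=D\log\Theta$ of Lemma \ref{P1xE_Lemma}(c), together with $\langle\,\omega\,|\,\BE^{\vee}(1)\,\rangle^{\p^1\times E}=1$ from part (b); the logarithmic-derivative shape is exactly what makes the descendent resummation exponentiate into powers of $\Theta$, and the defining product \eqref{Thetafunction} for $\Theta$ then identifies the closed form.

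I expect the main obstacle to be precisely this final resummation. The deformation invariance of the reduced class, the localization of the reduced structure on the $S$-component, and the multiplicativity of $\BE^{\vee}(1)$ are standard inputs from \cite{MPT}, and the class- and genus-bookkeeping in the degeneration formula is routine if tedious. The genuinely delicate point is to show that the infinite sum of rubber/$\p^1\times E$ corrections assembles \emph{exactly} into the factor $\Theta^2/u^2$---in particular that the power of $\Theta$ is precisely two---rather than merely agreeing with it to low order; this is where the explicit logarithmic-derivative evaluation of Lemma \ref{P1xE_Lemma} and the product expansion of $\Theta$ must be combined with care.
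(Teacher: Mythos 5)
There is a genuine gap, and it sits exactly where you flag ``the main obstacle.'' Your induction has the right multiplicative structure (degenerate to the normal cone of a fiber, push insertions into $\p^1\times E$ bubbles, keep the reduced class on the $S$-side), and this is essentially how the paper organizes the general-$n$ case. But the induction has no anchor: everything hinges on the closed-form evaluation of the per-insertion bubble factor
\[
\Big\langle\ \omega\ \Big|\ \BE^{\vee}(1)\,\frac{F}{1-\psi}\ \Big|\ 1\ \Big\rangle^{\p^1\times E}
\;=\;\frac{\Theta(u,q)^2}{u^2}\,,
\]
and you propose to extract it from Lemma \ref{P1xE_Lemma} by a ``descendent--rubber resummation.'' Lemma \ref{P1xE_Lemma} cannot supply this: part (a) is a degree vanishing, and parts (b),(c) evaluate series with a $\tau_0(\pt)$ insertion and \emph{no} $\psi$-classes, giving $1$ and $D\log\Theta$. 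There is no mechanism in the paper (or in your sketch) converting the full descendent series $F/(1-\psi)$ into those point-class integrals, and no reason the answer should exponentiate out of $D\log\Theta$; indeed the Appendix computes exactly such integrals $\langle \omega\,|\,\lambda_{g-k}\tau_k(F)\,|\,1\rangle^{\p^1\times E}$ by localization down to elliptic-curve invariants and states explicitly that closed formulas are difficult to obtain this way.

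The paper closes this hole with a geometric input you never invoke: Proposition \ref{vanishing1}. One first proves the $n=1$ case of the Proposition directly, by showing via Yau--Zaslow and a stable-pairs contribution analysis that
$\sum_{g,h}\langle \tau_0(F\boxtimes\omega)^3\rangle^{S\times\p^1}_{g,(\beta_h,1)}u^{2g}q^{h-1}=1/\Delta(q)$,
and then identifying this, by $\BC^{\ast}$-localization on the $\p^1$ factor and the divisor axiom, with $\langle\BE^{\vee}(1)\,\tfrac{F}{1-\psi_1}\rangle^S$. Only then does the degeneration argument run: comparing the known $n=1$ answer with the factorized form determines the unknown bubble factor (equation \eqref{assd}), and the chain degeneration of the base $\p^1$ gives the $n$-th power for general $n$. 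In other words, the bubble factor is \emph{solved for}, not computed; without the independent $n=1$ evaluation your inductive step is a tautology (the unknown factor is defined by the very equality you are trying to prove). To repair your proof, insert Proposition \ref{vanishing1} and the localization identity as the $n=1$ base case (your $n=0$ KKV base case is not enough, since it contains no information about the insertion $F/(1-\psi)$), and then your peeling argument becomes the paper's argument.
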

\begin{proof}
By Proposition \ref{vanishing1} we have
\begin{equation*}
\sum_{g \geq 0} \sum_{h \geq 0} \blangle \tau_0(F \boxtimes \omega)^3 \brangle_{g,(\beta_h,1)}^{S \times \p^1} u^{2g} q^{h-1} = \frac{1}{\Delta(q)} \,.
\label{gen1}
\end{equation*}
The factor $\p^1$ admits an action of $\BC^{\ast}$ which lifts to the moduli space $\Mbar_{g,n}(S \times \p^1, (\beta_h,1))$.
Applying the virtual localization formula \cite{GP} and using the divisor axiom yields
\begin{equation*} \label{gen2}
\blangle \tau_0(F \boxtimes \omega)^3 \brangle_{g,(\beta_h,1)}^{S \times \p^1}
= \Big\langle\, \BE^{\vee}(1) \ \frac{F}{1 - \psi_1} \Big\rangle^S_{g, \beta_h} \,.
\end{equation*}
This proves the claim for $n=1$.

For the general case, we degenerate $S$ to the normal cone of a fiber $E$ of the elliptic fibration $S \to \p^1$,
\begin{equation} \label{degeneration} S\ \leadsto\ S \, \cup  \, ( \p^1 \times E ) \end{equation}
specializing the fiber class $F$ to the $\p^1 \times E$ component.
The degeneration formula \cite{Junli1, Junli2},
see also \cite[Section 6]{MPT} and \cite[Section 3.4]{BOPY} for the modifications in the reduced case,
yields
\begin{equation} \label{4242}
\Big\langle \BE^{\vee}(1) \, \prod_{i=1}^{n} \frac{F}{1-\psi_i} \Big\rangle^S =
\Big\langle\,  \BE^{\vee}(1)\ \Big| \ 1 \ \Big\rangle^S
\Big\langle\ \omega\ \Big|\ \BE^{\vee}(1) \prod_{i=1}^{n} \frac{F}{1 - \psi_i}\, \Big\rangle^{\p^1 \times E}
\end{equation}
We analyze both terms on the right hand side.
By a further degeneration of $S$ (using Lemma \ref{P1xE_Lemma}) and then using the Katz-Klemm-Vafa formula \cite{MPT} we get
\begin{equation} \label{KKV}
\Big\langle\  \BE^{\vee}(1)\ \Big| \ 1 \ \Big\rangle^S = \Big\langle\,  \BE^{\vee}(1) \, \Big\rangle^S = \frac{1}{\Theta(u,q)^2 \Delta(q)} \,.
\end{equation}

For the second term,
we degenerate the base $\p^1$ to obtain a chain of $n+1$ surfaces
isomorphic to $\p^1 \times E$. The first $n$ of these each receive a single insertion $F$
weighted by psi classes.
Using Lemma \ref{P1xE_Lemma} we obtain
\begin{equation} \label{4343}
\Big\langle\ \omega\ \Big|\ \prod_{i=1}^{n} \frac{F}{1 - \psi}\, \Big\rangle^{\p^1 \times E}
=
\left( \Big\langle\ \omega\ \Big|\ \frac{F}{1 - \psi} \Big|\ 1\ \Big\rangle^{\p^1 \times E} \right)^n \,.
\end{equation}

In case $n=1$ the left hand side of \eqref{4242} is known and we can solve for \eqref{4343}. The result is
\begin{equation} \label{assd}
\Big\langle\ \omega\ \Big|\ \BE^{\vee}(1) \, \frac{F}{1 - \psi}\ \Big|\ 1\ \Big\rangle^{\p^1 \times E}
=
\frac{\Theta(u,q)^2}{u^2} \,. \end{equation}
Inserting \eqref{KKV} and \eqref{assd} back into \eqref{4242}, the proof is complete.
\end{proof}

\subsection{The abelian threefold}
Recall the bracket notation \eqref{gen_series_p1e} for the generating series
of relative invariants of $\p^1 \times E$ in class $(1,d)$.
We will need the following result.
\begin{lemma} \label{riogkegfr} For $\pt \in H^4(\p^1 \times E, \BZ)$ the point class,
\[
\Big\langle \ 1\  \Big| \ \BE^{\vee}(1) \ \frac{\pt}{1-\psi} \ \Big\rangle^{\p^1 \times E} = \frac{\Theta(u,q)^2}{u^2}
\]
\end{lemma}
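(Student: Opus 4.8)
The plan is to reduce the statement to the two-sided evaluation \eqref{assd}, which is already in hand. By the Künneth decomposition of the point class of $\p^1\times E$ one has
\[ \pt \ =\ F\cup \pi_2^{\ast}(\omega), \]
where $F=\pi_1^{\ast}(\mathrm{pt})$ is the fiber class appearing in \eqref{assd} and $\omega\in H^2(E)$ is the point class; geometrically $F$ is dual to a fiber $\{z\}\times E$ and $\pi_2^{\ast}(\omega)$ is dual to a section $\p^1\times\{e_0\}$. Hence
\[ \Big\langle\ 1\ \Big|\ \BE^{\vee}(1)\ \frac{\pt}{1-\psi}\ \Big\rangle^{\p^1\times E} \ =\ \Big\langle\ 1\ \Big|\ \BE^{\vee}(1)\ \frac{F\cup \pi_2^{\ast}(\omega)}{1-\psi}\ \Big\rangle^{\p^1\times E}, \]
and the idea is to convert the factor $\pi_2^{\ast}(\omega)$ at the interior marked point into a relative point condition, leaving precisely the integrand $\BE^{\vee}(1)\,F/(1-\psi)$ of \eqref{assd}. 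The leading term of the series is already consistent with this, since the $\tau_0(\pt)$ piece is computed by Lemma \ref{P1xE_Lemma}(b) to be the constant $1$, matching $\Theta(u,q)^2/u^2 = 1 + O(u^2)$.

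The geometric input is the description of class $(1,d)$ curves recalled in the proof of Lemma \ref{P1xE_Lemma}(a): such a curve is $(\p^1\times e)+D$ with $D$ a union of $\pi_1$-fibers, so the distinguished $\p^1$-component has a constant $E$-coordinate $e$, and this coordinate agrees with that of the intersection point over any $z\in\p^1$. Imposing $\pi_2^{\ast}(\omega)$ at the interior point forces $e=e_0$, but this is exactly the constraint imposed by the relative point class $\omega$ along a fiber divisor. To make the exchange rigorous I would degenerate the base $\p^1$, as in the proof of Proposition \ref{Proposition_Thm1m=0}, into a chain with a new relative divisor at the node. Since $\pi_2^{\ast}(\omega)$ is pulled back from $E$ it restricts to the point class on the node divisor, which is the mechanism that turns it into a relative condition, whereas $F$, being pulled back from the base, localizes on a single component. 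The resulting pieces are evaluated by Lemma \ref{P1xE_Lemma}(b),(c) together with \eqref{assd}, and reassemble to $\Theta(u,q)^2/u^2$.

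The main obstacle is the virtual-class bookkeeping in the degeneration: one must split the Hodge insertion $\BE^{\vee}(1)$ and the descendent series $1/(1-\psi)$ correctly over the two components of the chain, use the reduced degeneration formula of \cite[Sec.~6]{MPT} and \cite[Sec.~3.4]{BOPY}, and verify that passing between a single and a doubly-relative geometry carrying a unit condition produces no extra boundary terms. Once this comparison of obstruction theories is settled, the identification with \eqref{assd}, and hence the value $\Theta(u,q)^2/u^2$, is immediate.
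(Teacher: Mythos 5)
Your geometric intuition is correct and your reduction target \eqref{assd} is the same as the paper's, but the mechanism you propose for making the reduction rigorous does not exist. The degeneration formula never converts an interior insertion into a relative condition: when you degenerate the base $\p^1$ into a chain, the marked point carrying $\ev^{\ast}(\pt) = \ev^{\ast}(F \cup \pi_2^{\ast}\omega)$ specializes to one of the two components and remains an interior insertion there; what appears at the node is a sum over a dual basis $\eta$ of $H^{\ast}(E)$ of relative gluing conditions $\eta, \eta^{\vee}$, and the restriction of $\pi_2^{\ast}\omega$ to the node divisor plays no role whatsoever. If you carry the computation out, the identity you get is vacuous. Write $A = \blangle\, 1\, |\, \BE^{\vee}(1)\, \pt/(1-\psi)\, \brangle^{\p^1 \times E}$ and place the point insertion on the component away from the original relative divisor: the term $\eta = (1,\e)$ drops out because $\blangle\, 1\, |\, \BE^{\vee}(1)\, |\, 1\, \brangle^{\p^1 \times E} = 0$ for dimension reasons (the integrand has degree at most $g$ while the virtual dimension is $g+1$), and the term $\eta = (1,\omega)$ contributes $\blangle\, 1\, |\, \BE^{\vee}(1)\, |\, \omega\, \brangle \cdot A = 1 \cdot A$, the first factor being $1$ by Lemma \ref{P1xE_Lemma}(b) together with exactly the singly-versus-doubly-relative comparison you invoke. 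So the degeneration formula returns $A = A$. Placing the point insertion on the other component instead only yields the statement that the singly relative invariant equals the doubly relative one with a unit condition added --- again no progress toward \eqref{assd}.

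The missing ingredient --- and the one the paper actually uses --- is the translation action of $E$ on $\p^1 \times E$ and on its moduli of stable maps. The vanishing/exchange relations coming from this action (see \cite[Section 3.3]{BOPY} and \cite{OP3}) are precisely the rigorous form of your observation that the interior point condition and the relative condition $\omega$ both pin down the $E$-coordinate of the horizontal component: they yield directly
\[
\Big\langle\, 1\, \Big|\, \BE^{\vee}(1)\, \frac{\pt}{1-\psi}\, \Big\rangle^{\p^1 \times E}
=
\Big\langle\, \omega\, \Big|\, \BE^{\vee}(1)\, \frac{F}{1-\psi}\, \Big\rangle^{\p^1 \times E},
\]
after which \eqref{assd} finishes the proof. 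No degeneration of the base can substitute for this, because degenerating $\p^1$ leaves the translation direction of the target untouched. A minor further point: your appeal to the reduced degeneration formula of \cite{MPT}, \cite{BOPY} is out of place here, since $\p^1 \times E$ carries no holomorphic symplectic form and its Gromov--Witten invariants are ordinary rather than reduced.
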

\begin{proof}
The translation action of the elliptic curve on $\p^1 \times E$
yield basic vanishing relations on the Gromov-Witten theory of $\p^1 \times E$,
see \cite[Section 3.3]{BOPY} for the parallel case of abelian surfaces and also \cite{OP3}.
A straightforward application here yields
\[
\Big\langle \ 1\  \Big| \ \BE^{\vee}(1) \ \frac{\pt}{1-\psi} \Big\rangle^{\p^1 \times E}
=
\Big\langle\ \omega\ \Big|\ \BE^{\vee}(1) \ \frac{F}{1 - \psi_1} \, \Big\rangle^{\p^1 \times E} \,,
\]
where $F$ is the fiber over a point in $\p^1$. The claim follows from \eqref{assd}.
\end{proof}
%
%
%
%

\begin{proof}[Proof of Theorem \ref{abelianthm}]
By deformation invariance we may consider the special geometry
\[ A = E_1 \times E_2 \times E_3 \,, \]
where $E_i$ are elliptic curves, and the curve classes
\[ (1,1,d) = \iota_{E_1, \ast}([E_1]) + \iota_{E_2 \ast}([E_2]) + \iota_{E_3 \ast}( d [E_3] )\ \in H_2(A, \BZ) \]
where $\iota_{E_i} : E_i \hookrightarrow A$ is the inclusion of a fiber of the map forgetting the $i$th factor.
For $i \in \{ 1, 2, 3 \}$ let
\[ H_i \in H^2(A) \]
be the pullback of the point class from the $i$-th factor of $A$.
By \cite[Lemma 18]{BOPY},
\[ \mathsf{N}^{A}_{g, (1,1,d)} = \frac{1}{2} \Big\langle \tau_0( \pt ) \tau_0(H_1 H_2) \Big\rangle^{A,\, 3\text{-red}}_{g, (1,1,d)} \]
where the right hand side are absolute 3-reduced Gromov-Witten invariants of $A$
with insertions the point class $\pt \in H^6(A, \BZ)$ and $H_1 H_2$.

We degenerate the factor $E_1$ to a nodal rational curve and resolve.
Applying the degeneration formula modified to the reduced
case\footnote{This is parallel to the breaking
of the reduced virtual class in the K3 case when degenerating
to two rational elliptic surfaces, see \cite[Section 4]{MPT}.}
we obtain
\[
\big\langle \tau_0( \pt ) \tau_0(H_1 H_2) \big\rangle^{A,\text{3-red}}_{g, \beta}
=
\Big\langle\ 1\ \Big| \ \tau_0( \pt ) \tau_0(H_1 H_2)\ \Big|\ 1\ \Big\rangle^{\p^1 \times E_2 \times E_3, \text{red}}_{g-1, (1,1,d)} \,,
\]
where the right hand side are $1$-reduced invariants of $\p^1 \times E_2 \times E_3$
relative to the fibers over $0$ and $\infty$, and $H_i$ is
the pullback of the point class from the $i$-th factor.

By a degeneration of the base $\p^1$ to a chain of three $\p^1$'s and specializing all
insertions to the middle factor, we obtain
\begin{equation*}
\Big\langle\ 1\ \Big| \ \tau_0( \pt ) \tau_0(H_1 H_2)\ \Big|\ 1\ \Big\rangle^{\p^1 \times E_2 \times E_3, \text{red}}_{g-1, (1,1,d)}
\ = \
\Big\langle \tau_0( \pt ) \tau_0(H_1 H_2) \Big\rangle^{\p^1 \times E_2 \times E_3, \text{red}}_{g-1, (1,1,d)} \,,
\label{12345}
\end{equation*}
to which we apply the localization formula to get
\begin{equation} \label{midd}
\Big\langle\ \BE^{\vee}(1)\, \frac{\pt}{1-\psi_1}\, \Big\rangle^{E_2 \times E_3, \text{red}}_{g-1, (1,d)}
+ \Big\langle\ \BE^{\vee}(1)\, \frac{H_2}{1-\psi_1} \, \tau_0(\pt) \Big\rangle_{g-1,(1,d)}^{E_2 \times E_3, \text{red}} \,.
\end{equation}

Let $E = E_3$. We calculate both terms of \eqref{midd} by the degeneration
formula for
\[ E_2 \times E \ \leadsto \ (E_2 \times E) \cup (\p^1 \times E) \,. \]
where the point and $H_2$ class are specialized to the $\p^1 \times E$ component.
In both cases we will use the evaluation
$\langle\, \BE^{\vee}(1)\, | \, \omega \, \rangle^{E_2 \times E_3}_{g, (1,d)} = \delta_{g,1} \delta_{d,0}$ 
proven in \cite[Lemma 8]{BOPY}. The result for the first term is
\[
\Big\langle\ \BE^{\vee}(1)\, \frac{\pt}{1-\psi_1}\, \Big\rangle^{E_2 \times E_3, \text{red}}_{g-1, (1,d)}
=
\Big\langle\ 1 \ \Big|\ \BE^{\vee}(1) \frac{\pt}{1-\psi_1} \Big\rangle^{\p^1 \times E}_{g-2, (1,d)} \,,
\]
and similarly the second term yields
\begin{align*}
\Big\langle\ \BE^{\vee}(1)\, \frac{H_2}{1-\psi_1} \, \tau_0(\pt) \Big\rangle_{g-1,(1,d)}^{E_2 \times E_3, \text{red}} \,.
& =
\Big\langle\ 1 \ \Big|\ \BE^{\vee}(1) \frac{F}{1-\psi_1} \tau_0(\pt)\, \Big\rangle^{\p^1 \times E}_{g-2, (1,d)} \\
& =
\Big\langle\ 1 \ \Big|\ \BE^{\vee}(1) \frac{F}{1-\psi_1} \ \Big|\ \omega \ \Big\rangle^{\p^1 \times E}_{g-2, (1,d)}
\end{align*}
where $F$ is the class of a fiber over a point in $\p^1$,
and in the second step we used a further degeneration of the base $\p^1$ and Lemma~\ref{P1xE_Lemma}.
Using Lemma \ref{riogkegfr} and \eqref{assd} the claim follows now by summing up.
\end{proof}

\section{Formal series of quasi-modular forms} \label{Section_Formal_series_of_quasimodular_forms}
\subsection{Quasi-modular forms}
The ring of \emph{quasi-modular forms} is the free polynomial algebra
\[ \QMod = \BC[ C_2, C_4, C_6 ] \,, \]
where $C_{2k}$ are the Eisenstein series. The natural weight grading
\[ \QMod = \bigoplus_{m \geq 0} \QMod_m \]
is defined by assigning $C_{2k}$ weight $2k$.

For a quasi-modular form $f(q) = \sum_n a_n q^n$, let
\[ \nu(f) = \mathrm{inf} \{ \, n \, |\, a_n \neq 0\, \} \]
be the order of vanishing of $f$ at $q=0$.
If $f$ is a modular form, i.e. $f \in \BC[C_4, C_6]$, and $f$ is non-zero of weight $m$, then
\[ \nu(f) < \dim \Mod_m, \quad \text{ hence} \quad  \nu(f) \leq \frac{1}{12} m \,, \]
where $\Mod_m$ is the space of weight $m$ modular forms.
Similarly, one may ask if $\nu(f) < \dim \QMod_m$ also holds for
every non-zero quasi-modular form of weight $m$, see \cite{KK} for a discussion.
For us the following weaker bound proven by Saradha suffices:

\begin{lemma}[\cite{Saradha}, \cite{BP}] \label{Lemma:QMod_vanishing}
Let $f$ be a non-zero quasi-modular form of weight $2k$. Then 
\[ \nu(f) \leq \frac{1}{6} k (k+1) \,. \]
\end{lemma}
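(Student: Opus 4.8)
The plan is to convert the purely algebraic information that $f$ lies in the finite-dimensional space $\QMod_{2k}$ into an analytic bound on its $q$-order at the cusp, by exploiting the differential structure of quasi-modular forms. Write $f = \sum_{j=0}^{p} g_j\, C_2^{\,j}$ with $g_j \in \Mod_{2(k-j)}$ modular and $g_p \neq 0$; the top index $p$ is the depth of $f$. One cannot argue by a dimension count alone: the assertion $\nu(f) < \dim \QMod_{2k}$ is exactly the open problem mentioned just after the statement, so Saradha's weaker bound must come from the modularity of $f$ and not merely from its membership in a finite-dimensional space. A first instinct is to induct on the depth through the operator $\partial_{C_2}$, which sends $f$ to a quasi-modular form of weight $2k-2$ and depth $p-1$ and, after $p$ steps, to the honest modular form $g_p$ of weight $2(k-p)$, whose order of vanishing is controlled by the classical valence bound $\nu(g_p)\le (k-p)/6$. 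The difficulty — and the reason this naive induction fails — is that $\partial_{C_2}$ bears no transparent relation to the valuation $\nu$, so knowing $\nu(\partial_{C_2}f)$ tells us nothing directly about $\nu(f)$.

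To repair this I would package the $C_2$-derivatives $f_r \propto \partial_{C_2}^{\,r} f$, $0\le r\le p$, into a single vector-valued modular form of weight $2k$ for a $(p+1)$-dimensional representation of $\mathrm{SL}_2(\BZ)$; this uses only that $C_2$ fails to be modular through an additive term proportional to $c/(c\tau+d)$. The associated Wronskian construction then shows that $f$ is a solution of a monic modular linear differential equation of order $p+1$,
\[ \vartheta^{(p+1)} f + \phi_2\,\vartheta^{(p-1)} f + \cdots + \phi_{p+1}\, f = 0, \]
where $\vartheta^{(n)}$ is the $n$-fold iterated Serre derivative built from $D=\dq$ and each $\phi_i$ is a modular form of weight $2i$ with poles only at the elliptic points $\tau=i,\rho$ (note that there is automatically no $\vartheta^{(p)}$-term, since $\Mod_2 = 0$). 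Consequently $\nu(f)$ is one of the finitely many indicial roots of this equation at $q=0$, and it suffices to bound the largest of them.

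The bound on the indicial roots I would extract from the valence formula applied to the Wronskian $\mathcal{W}$ of a fundamental system of solutions: $\mathcal{W}$ is a modular form of weight $(p+1)(2k+p)$, its cusp order equals the sum of the indicial roots, and hence $\nu_\infty(\mathcal{W})\le \tfrac{1}{12}(p+1)(2k+p)$. Combining this with lower bounds for the remaining exponents — which are nonnegative since all solutions arise as components of a holomorphic vector-valued modular form — yields a bound of the stated quadratic order $\tfrac16 k(k+1)$ after using $p\le k$. I expect the main obstacle to be precisely the local analysis at the elliptic points: one must verify that the coefficients $\phi_i$ have at worst the predicted poles at $\tau=i,\rho$ and bound the corresponding local exponents there. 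It is this crude, sign-only control of the elliptic data that loses a constant factor and produces Saradha's weaker bound rather than the conjectural $\nu(f)<\dim \QMod_{2k}$; sharpening the elliptic bookkeeping to obtain the optimal constant along these lines would resolve the open problem quoted above.
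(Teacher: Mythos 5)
You are not comparing against an internal argument here: the paper's ``proof'' of this lemma is a one-line attribution, saying that the argument of \cite[Lemma 3]{Saradha} yields the stated bound, as observed in \cite{BP}. So any self-contained proof is necessarily a different route; the question is whether yours closes. As written it does not, and the decisive gap is the weight bookkeeping, which changes the constant. Your components $f_r \propto \partial_{C_2}^{\,r} f$ are quasi-modular of weight $2k-2r$, not $2k$; the associated vector-valued form has weight $2k-p$ for $\Sym^p$ of the standard representation, and the Wronskian therefore has weight $(p+1)(2k-p)+p(p+1)=2k(p+1)$, which is what produces $\nu(f)\le \tfrac{1}{12}\,2k(p+1)=\tfrac16 k(p+1)\le \tfrac16 k(k+1)$. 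With your figure $(p+1)(2k+p)$ the same valence argument gives only $\nu(f)\le \tfrac1{12}(p+1)(2k+p)$, which at the extremal depth $p=k$ is $\tfrac14 k(k+1)$. That is not ``the stated quadratic order'': it is a strictly weaker inequality that does not prove the lemma, and the constant is precisely what the paper needs. In the proof of Proposition \ref{formel_prop} (with $\sigma=0$) the bound \eqref{use_of_vanishing_bound} is played off against $\tfrac14 b(b+2)\le \nu(f_{2b+2})$; with $\tfrac14$ in place of $\tfrac16$ on the quasi-modular side the required contradiction fails for \emph{every} $b$, no matter how large the threshold $B(\sigma)$ is taken, because the two quadratics then have the same leading coefficient.

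There is a second, structural gap. The vector $(f,\partial_{C_2}f,\dots,\partial_{C_2}^{\,p}f)$ does not transform under a constant $(p+1)$-dimensional representation: its cocycle involves powers of $c/(c\tau+d)$ (test this on $E_2$, whose partner is a constant; no constant-matrix law holds for the pair). To get an honest vector-valued modular form for $\Sym^p$ one must twist the components by powers of $\tau$ (in depth one, $(\tau E_2+\mathrm{const},\,E_2)$ works), and since $\Sym^p$ is unipotent on $T$, the Frobenius solutions of your MLDE at $q=0$ then acquire logarithms. Consequently your final step --- that the remaining exponents are nonnegative and sum to $\nu_\infty(\mathcal{W})$ --- is not automatic: it needs an argument valid in the presence of logarithmic solutions and of coefficients that are a priori only meromorphic at the cusp. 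Relatedly, the parenthetical claim that there is no $\vartheta^{(p)}$-term ``since $\Mod_2=0$'' is unjustified in your own framework: once poles at the elliptic points are permitted, nonzero meromorphic weight-$2$ forms exist (e.g.\ $E_4^2/E_6$). None of this is necessarily fatal to the strategy --- with the corrected weights and a careful treatment of the logarithmic exponents it does yield $\tfrac16 k(p+1)$ --- but these repairs are the actual content of the lemma, which the paper deliberately outsources to \cite{Saradha} and \cite{BP}.
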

\begin{proof} The proof in \cite[Lemma 3]{Saradha}
also yields the stronger result stated here, as has been observed in \cite{BP}.
\end{proof}

\subsection{Formal series}
Let $u$ be a formal variable, and consider a power series
\[ \mathsf{F}(u,q) = \sum_{m \geq 0} f_m(q) u^m \]
in $u$ with coefficients $f_m(q) \in \QMod$. Let
$\big[ f_m(q) \big]_{q^n}$
denote the coefficient of $q^n$ in $f_m(q)$, and let
\[ \mathsf{F}_n(u) = \big[ \mathsf{F}(u,q) \big]_{q^n} = \sum_{m \geq 0} \big[ f_m(q) \big]_{q^n} u^m \]
be the series of $n$-th coefficients.

\begin{prop} \label{formel_prop}
Let $\sigma$ be an even integer, and let 
\[ \mathsf{F}(u,q) = \sum_{m \geq 0} f_m(q) u^m \]
be a formal power series in $u$ satisfying the following conditions:
\begin{enumerate}
 \item[(a)] $f_m(q) \in \QMod_{m+\sigma}$ for every $m$,
 \item[(b)] $\mathsf{F}_n(u)$ is the Laurent expansion of a rational function in $y$
 under the variable change $y = -e^{iu}$,
 \[ \mathsf{F}_n(u) = \sum_{r} c(n,r) y^r \,, \]
 \item[(c)] $c(n,r) = 0$ unless $r^2 \leq 4 n + 1$,
\item[(d)] $f_m(q) = 0$ for all $m \leq B(\sigma)$ where
\[ B(\sigma) = 2 \floor{ \sigma + 1 + \sqrt{2 \sigma^2 + 3 \sigma + 4} } \,. \]
\end{enumerate}
Then $\mathsf{F}(u,q) = 0$.
\end{prop}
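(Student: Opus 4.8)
The plan is to argue by contradiction. Suppose $\mathsf{F} \neq 0$ and let $m_0$ be the smallest index with $f_{m_0} \neq 0$; by (d) we have $m_0 > B(\sigma)$, and since odd-weight quasi-modular forms vanish while $\sigma$ is even, $m_0$ is necessarily even. I will extract a \emph{lower} bound on the order of vanishing $\nu(f_{m_0})$ from the Jacobi-type constraints (b)--(c) and play it against the \emph{upper} bound supplied by Lemma~\ref{Lemma:QMod_vanishing}; for $m_0$ past the threshold $B(\sigma)$ the two are incompatible.

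For the lower bound, fix $n$ and set $R_n = \floor{\sqrt{4n+1}}$. By (b)--(c) the slice $\mathsf{F}_n(u) = \sum_{|r| \le R_n} c(n,r)\, y^r$ is a Laurent polynomial in $y = -e^{iu}$ with at most $2R_n+1$ terms. Writing $y^r = (-1)^r e^{iru}$ and expanding in $u$, the coefficient of $u^m$ in $\mathsf{F}_n(u)$ equals $\frac{i^m}{m!}\sum_r c(n,r)(-1)^r r^m$; by definition of $\mathsf{F}_n$ it also equals $[f_m(q)]_{q^n}$. Since $f_m = 0$ for $m < m_0$, the first $m_0$ Taylor coefficients of $\mathsf{F}_n$ vanish. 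If $2R_n < m_0$, then $\sum_r c(n,r)(-1)^r r^m = 0$ for $m = 0, \dots, 2R_n$, a square Vandermonde system in the distinct nodes $r \in \{-R_n, \dots, R_n\}$, forcing all $c(n,r) = 0$ and hence $\mathsf{F}_n = 0$. Thus $[f_{m_0}]_{q^n} = 0$ whenever $R_n < m_0/2$, i.e.\ whenever $n < (m_0^2-4)/16$, and therefore
\[ \nu(f_{m_0}) \ \ge\ \frac{m_0^2-4}{16}. \]

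On the other hand $f_{m_0}$ has weight $m_0+\sigma = 2k$ with $k = (m_0+\sigma)/2$, so Lemma~\ref{Lemma:QMod_vanishing} gives $\nu(f_{m_0}) \le \frac{1}{6}k(k+1) = \frac{(m_0+\sigma)(m_0+\sigma+2)}{24}$. The two bounds are contradictory as soon as
\[ \frac{m_0^2-4}{16} \ >\ \frac{(m_0+\sigma)(m_0+\sigma+2)}{24}, \]
which, after clearing denominators, is the quadratic inequality $m_0^2 - (4\sigma+4)m_0 - (2\sigma^2+4\sigma+12) > 0$, valid precisely when $m_0 > 2\sigma + 2 + \sqrt{6\sigma^2+12\sigma+16}$. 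Finally I check that (d) secures this: since $m_0$ is even and $m_0 > B(\sigma)$ with $B(\sigma)$ even, we have $m_0 \ge B(\sigma)+2 > 2\big(\sigma + 1 + \sqrt{2\sigma^2+3\sigma+4}\big) = 2\sigma+2+\sqrt{8\sigma^2+12\sigma+16}$, and $\sqrt{8\sigma^2+12\sigma+16} \ge \sqrt{6\sigma^2+12\sigma+16}$. This contradicts $f_{m_0} \neq 0$, so $\mathsf{F} = 0$.

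The conceptual heart is that the coefficient bound (c) — an upper bound on the Jacobi-index exponent $r$ for each fixed $q$-degree $n$ — turns, through the Vandermonde mechanism, into a bound on $\nu(f_{m_0})$ that grows \emph{quadratically} in $m_0$, whereas the Saradha bound grows only \emph{quadratically in the weight but linearly relative to the index shift}, so that the former eventually dominates. I expect the only real obstacle to be quantitative: verifying that the explicit choice of $B(\sigma)$ in (d), together with the floor and the parity of $m_0$, is exactly large enough to force the strict inequality above. The Vandermonde step and the identification of $[u^m]\mathsf{F}_n$ with the power sum $\sum_r c(n,r)(-1)^r r^m$ are routine once the finite Laurent-polynomial structure from (b)--(c) is established.
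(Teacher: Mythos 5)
Your proposal is correct and takes essentially the same route as the paper's proof: argue by contradiction, extract the lower bound $\nu(f_{m_0}) \geq (m_0^2-4)/16$ from conditions (b)--(c), play it against Saradha's bound (Lemma~\ref{Lemma:QMod_vanishing}), and verify that condition (d) together with the parity of $m_0$ makes the two bounds incompatible --- with identical numerics throughout. The only difference is cosmetic: you establish the key vanishing claim via an invertible Vandermonde system in the distinct nodes $r \in \{-R_n,\dots,R_n\}$, whereas the paper uses the symmetry $c(n,r) = c(n,-r)$ and a triangular change of basis to powers of $y^{1/2}+y^{-1/2} = -2\sin(u/2)$; both yield the same lower bound on $\nu(f_{m_0})$.
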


\begin{proof}
Assume $F$ is non-zero.
Since $\sigma$ is even and all quasi-modular forms have even weight,
we have $f_m = 0$ unless $m$ is even.
Hence there exists an integer $b$
such that $f_m(q) = 0$ for all $m \leq 2b$, but $f_{2b+2}(q) \neq 0$.
Necessarily, $2 b \geq B(\sigma)$.

\noindent \emph{Claim.} $\mathsf{F}_n(u) = 0$ for $n < \frac{1}{4} b (b+2)$.

\noindent \emph{Proof of Claim.} By property (b) and (c) above, we may write
\[ \mathsf{F}_n(u) = \sum_{m \geq 0} a_m u^{2m} = \sum_{\ell = - \ell_{\text{max}}}^{\ell_{\text{max}}} c_\ell y^{\ell} \]
for coefficients $a_{m}, c_{\ell} \in \BC$ where $\ell_{\text{max}} = \floor{\sqrt{4n+1}}$.

Since $f_m = 0$ for all odd $m$, we find $\mathsf{F}_n(-u) = \mathsf{F}_n(u)$,
which yields the symmetry
$c_{\ell} = c_{-\ell}$.
In particular, we may also write
\[ \mathsf{F}_n(u) = \sum_{\ell = 0}^{\ell_{\text{max}}} b_{\ell} r^{2 \ell} \]
where
\[ r = y^{\frac{1}{2}} + y^{-\frac{1}{2}} = - 2 \sin\left( \frac{u}{2} \right) = - u + \frac{1}{24} u^3 + \ldots \,. \]
Since $r = - u + O(u^3)$ we obtain an invertible and upper-triangular relation
between the coefficients $\{ a_{\ell} \}_{\ell \geq 0}$ and $\{ b_{\ell} \}_{\ell \geq 0}$.
In particular, $a_{\ell} = 0$ for $\ell = 0, \dots, b$ implies
$b_{\ell} = 0$ for $\ell = 0, \dots, b$.
Since moreover $n < \frac{1}{4} b (b+2)$ implies $\ell_{\text{max}} \leq b$ we find
$b_{\ell} = 0$ for all $\ell$ and hence $\mathsf{F}_n = 0$ as claimed. \qed

We conclude the proof of Proposition \ref{formel_prop}. By the claim
the order of vanishing of $f_{2b+2}(q)$ at $q=0$ is at least $\frac{1}{4} b (b+2)$,
\[ \frac{1}{4} b (b+2) \leq \nu(f_{2b+2}) \,. \]
But by Lemma \ref{Lemma:QMod_vanishing} and the non-vanishing of $f_{2b+2}$,
\begin{equation} \nu(f_{2b+2}) \leq \frac{1}{6} (b+\sigma/2+1) (b+\sigma/2+2) \,, \label{use_of_vanishing_bound} \end{equation}
which is impossible since $2 b \geq B(\sigma)$.
\end{proof}

A crucial ingredient in the proof of Proposition \ref{formel_prop}
was the vanishing Lemma \ref{Lemma:QMod_vanishing} employed in equation \eqref{use_of_vanishing_bound}.
If we could prove
\begin{equation} \label{strong_bound} \nu(f) < \dim \QMod_{m} \end{equation}
for all non-zero quasi-modular forms of weight $m$, we could sharpen the bound in (d).
While we can't prove \eqref{strong_bound} for all $m$,
we have verified it for all $m \leq 250$.
This leads to the following partial strengthening of Proposition \ref{formel_prop}.
\begin{lemma} \label{formel_prop_strenthening} Assume $\sigma \leq 42$. Then Proposition \ref{formel_prop}
holds with property (d) replaced by
\begin{enumerate}
 \item[(d')] $f_m(q) = 0$ for $m \leq B'(\sigma)$, where $B'(\sigma)$ is
\[ 2 \cdot \mathrm{min} \Big\{ \widetilde{b} \in \BZ \ \Big|\ \frac{1}{4} b (b+2) > \dim \QMod_{\sigma + 2 b + 2} - 1 \text{ for all } b \geq \widetilde{b} \Big\} \,. \]
\end{enumerate}
\end{lemma}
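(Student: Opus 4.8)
The plan is to rerun the proof of Proposition~\ref{formel_prop} essentially verbatim, changing only the single place where a vanishing bound for quasi-modular forms is invoked. In the original argument the contradiction is extracted in \eqref{use_of_vanishing_bound} from Saradha's estimate (Lemma~\ref{Lemma:QMod_vanishing}); here I would instead split into cases according to the weight, using the sharper verified bound \eqref{strong_bound} where it is available and falling back on Saradha's bound for the remaining (large) weights.

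Concretely, I would first assume $\mathsf{F}\neq 0$ and, exactly as before, produce an integer $b$ with $f_m(q)=0$ for $m\le 2b$ and $f_{2b+2}(q)\neq 0$. The internal Claim of Proposition~\ref{formel_prop} rests only on properties (b), (c), the parity vanishing $f_m=0$ for odd $m$, and the choice of $b$ — none of which is affected by the present hypotheses — so it carries over unchanged and gives $\nu(f_{2b+2})\ge \tfrac14 b(b+2)$. Hypothesis (d$'$) now says $2b\ge B'(\sigma)$, i.e. $b\ge \tilde{b}:=B'(\sigma)/2$. The form $f_{2b+2}$ is nonzero of weight $m=\sigma+2b+2$, and the heart of the argument is a dichotomy on this weight. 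If $m\le 250$, I invoke the verified strong bound $\nu(f_{2b+2})\le \dim\QMod_m-1$; since $b\ge\tilde{b}$, the defining inequality of $B'(\sigma)$ yields $\tfrac14 b(b+2)>\dim\QMod_{\sigma+2b+2}-1\ge\nu(f_{2b+2})$, contradicting the Claim. If instead $m>250$, then $b>124-\sigma/2$, and here I would simply reuse Saradha's bound exactly as in Proposition~\ref{formel_prop}, whose contradiction requires only $2b\ge B(\sigma)$.

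The main obstacle — indeed essentially the only nontrivial point — is the numerical compatibility of the two regimes: one must check that the threshold $B(\sigma)/2$ beyond which Saradha's bound alone closes the argument lies below the point $124-\sigma/2$ at which the verified range $m\le 250$ of the strong bound runs out. A short estimate of $B(\sigma)=2\floor{\sigma+1+\sqrt{2\sigma^2+3\sigma+4}}$ shows $B(\sigma)/2\le 124-\sigma/2$ precisely for $\sigma\le 42$, with equality at $\sigma=42$ where both sides equal $103$; this is exactly the standing hypothesis and explains the cutoff $250=\sigma+B(\sigma)+2$. Granting this inequality, in the case $m>250$ we get $b>124-\sigma/2\ge B(\sigma)/2$, hence $2b>B(\sigma)$, so Saradha's bound applies. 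The two cases thus exhaust all $b\ge\tilde{b}$, each produces a contradiction, and therefore $\mathsf{F}(u,q)=0$.

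I would keep the verification of \eqref{strong_bound} for $m\le 250$ as a finite computation taken as input (a dimension count of $\QMod_m$ against the maximal order of vanishing), and I do not expect any further difficulty: the only content is the reduction above plus the elementary inequality relating $B(\sigma)$ and the verified range.
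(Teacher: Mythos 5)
Your proof is correct and follows essentially the same strategy as the paper: split into two regimes, use the verified bound \eqref{strong_bound} when the weight is at most $250$ (contradiction via the definition of $B'(\sigma)$), and fall back on Saradha's bound when the weight is larger, with $\sigma \leq 42$ being exactly the numerical condition that makes the two regimes overlap. The only cosmetic difference is that the paper splits at $b \leq 103$ versus $b > 103$ and, in the large-$b$ case, simply invokes Proposition \ref{formel_prop} as a black box (noting that property (d) then holds since $2b \geq 208 \geq B(\sigma)$), whereas you re-run its internal contradiction; the content is identical.
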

\begin{proof} This follows by an argument identical to proof of Proposition \ref{formel_prop} except for the following steps:

If $b \leq 103$, then $2b+2+\sigma \leq 250$ and we use the bound \eqref{strong_bound} instead of \eqref{use_of_vanishing_bound}.
This leads to a contradiction by definition of $B'(\sigma)$.

If $b > 103$, then by assumption $f_m = 0$ for all $m \leq 208$. In particular, property (d)
of Proposition \ref{formel_prop} holds and we can apply Proposition \ref{formel_prop}.
\end{proof}

\noindent \textbf{Remarks.} (a) Since
\[ \dim \QMod_{2 \ell} = \frac{1}{12} \left( \ell^2 + 6 \ell + 12 \right) - c(\ell) \]
where $|c(\ell)| < 1$, the inequality
$ b (b+2) / 4 > \dim \QMod_{\sigma + 2 b + 2}$
holds for all $b$ sufficiently large. In particular, $B'(\sigma)$ defined above is well-defined and finite.
The first values are given in the following table:
\begin{center}
\begin{longtable}{|l | c | c | c | c | c | c | c | c | c | c | c | }
\hline
$\sigma$    & $<-2$ & $-2$ & $0$ & $2$ & $4$ & $6$ & $8$ & $10$ & $12$ & $14$ & $16$ \\
\hline
$B'(\sigma)$ & $-\infty$ & $2$ & $6$ & $10$ & $12$ & $14$ & $18$ & $20$ & $24$ & $26$ & $28$ \\
\hline
\end{longtable}
\end{center}
\vspace{-10pt}
In particular, for $\sigma < -2$ property (d') of Lemma~\ref{formel_prop_strenthening} is always
satisfied.

\vspace{3pt}
\noindent (b) We may obtain from Proposition \ref{formel_prop} a similar statement for odd $\sigma$ by integrating $\mathsf{F}$ formally with respect to $u$.

\vspace{3pt}
\noindent (c) The coefficient bound $r^2 \leq 4n + 1$ in Proposition \ref{formel_prop} (c) is the index~$1$ case
of the Fourier coefficient bound for weak Jacobi forms \cite{EZ}.
Surprisingly, the proof of Proposition \ref{formel_prop} fails
for higher index since these coefficient constraints become weaker,
while the growth of $\dim \QMod_{2 \ell}$ remains constant. The analog
of $B'(\sigma)$ is no longer well-defined.

\vspace{3pt}
\noindent (d) In applications below, the coefficient of $u^{2g + 2}$
in $\mathsf{F}(u,q)$ is a series of genus $g$ Gromov-Witten invariants of K3 surfaces.
For low $\sigma$, checking the vanishing of these coefficients in the range $2g + 2 \leq B(\sigma)$ is feasible.

\vspace{3pt}
\noindent (e) Proposition \ref{formel_prop} was motivated by
the proof of the Kudla modularity conjecture using formal series of Jacobi forms \cite{BR}.

\section{Genus induction} \label{Section:Genus_induction}
\subsection{Overview}
Let $S$ be an elliptic K3 surface with section, let $B$ and $F$
be the section and fiber class respectively, set
$\beta_h = B + h F$ where $h \geq 0$,
and let $\pt \in H^4(S,\BZ)$ be the class of a point.
Recall the generating series notation \eqref{Generating_Series_K3_Surfaces} for the surface $S$.
In this section we will prove the following evaluation:
\begin{thm} \label{mainthm_1} For all $m,n \geq 0$,
\[
\Big\langle\, \BE^{\vee}(1) \ \prod_{i=1}^{m} \frac{\pt}{1-\psi_i} \prod_{i=m+1}^{m+n} \frac{F}{1-\psi_i} \Big\rangle^{S}
= \frac{ ( \mathbf{G}(u,q) - 1 )^{m} \Theta(u,q)^{2n} }{ u^{2m+2n} \Theta(u,q)^2 \Delta(q)} .
\]
\end{thm}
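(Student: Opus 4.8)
The plan is to deduce Theorem~\ref{mainthm_1} from three inputs: the base case $m=0$ (Proposition~\ref{Proposition_Thm1m=0}), the uniqueness statement for formal series of quasi-modular forms (Proposition~\ref{formel_prop}, sharpened by Lemma~\ref{formel_prop_strenthening}), and the Ciliberto--Knutsen bound (Theorem~\ref{thm_CK}). The entire new content sits in the point insertions $\frac{\pt}{1-\psi_i}$, whose contribution is recorded by the factor $(\mathbf{G}(u,q)-1)^m$; the fiber insertions $\frac{F}{1-\psi_i}$ are already governed by the factor $\Theta(u,q)^{2n}$ coming from Proposition~\ref{Proposition_Thm1m=0}.

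First I would reduce the general statement to a single point-insertion building block. Proceeding one point at a time, I would degenerate $S$ to the normal cone of an elliptic fiber as in \eqref{degeneration} and apply the degeneration formula together with Lemma~\ref{P1xE_Lemma} and the splitting of $\BE^{\vee}(1)$, to show that each factor $\frac{\pt}{1-\psi_i}$ enters multiplicatively and that the fiber insertions reproduce $\Theta(u,q)^{2n}$ via Proposition~\ref{Proposition_Thm1m=0}. This reduces the theorem to the single evaluation
\[
\Big\langle \BE^{\vee}(1)\, \frac{\pt}{1-\psi_1}\Big\rangle^S = \frac{\mathbf{G}(u,q)-1}{u^2 \Theta(u,q)^2\Delta(q)}.
\]

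To evaluate this series I would form the difference $\mathsf{F}(u,q)$ of its two sides, cleared of denominators by multiplying by a suitable power of $u$ times $\Theta(u,q)^2\Delta(q)$, and verify the hypotheses of Proposition~\ref{formel_prop}. Condition (a), quasi-modularity of the $u$-coefficients with the appropriate weight shift $\sigma$ (small, so that Lemma~\ref{formel_prop_strenthening} applies), follows from the established quasi-modular structure of reduced K3 invariants in the style of \cite{MPT}, with $\sigma$ read off from the expansion of the conjectural answer. The decisive conditions (b) and (c) --- rationality in $y=-e^{iu}$ and the Fourier-coefficient bound $r^2\le 4n+1$ --- I would extract from Theorem~\ref{thm_CK}: via virtual localization on $S\times\p^1$ in degree $2$ over $\p^1$, the point-insertion series is matched to counts of curves in class $(\beta_h,2)$, which project to hyperelliptic (gonality-$2$) curves on $S$. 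The Ciliberto--Knutsen bound then constrains the arithmetic genus of the contributing curves for fixed $h$: its finiteness yields the Laurent-polynomiality (b), and its sharp form yields (c) after the index shift $r=1-g-d$ noted in the introduction. Condition (d) is then a finite computation --- by Lemma~\ref{formel_prop_strenthening} and the table only the handful of low-genus coefficients up to $B'(\sigma)$ must be checked against the proposed formula --- after which Proposition~\ref{formel_prop} forces $\mathsf{F}=0$.

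The main obstacle is precisely establishing conditions (b) and (c): translating the geometric genus bound on hyperelliptic curves in $S\times\p^1$ into the exact Fourier-coefficient vanishing for the K3 generating series. This demands setting up the localization/degeneration dictionary between the K3 point-insertion invariants and the degree-$2$ curve counts on $S\times\p^1$ carefully enough to track which $(g,h)$ pairs can contribute and to rule out spurious excess-dimension or boundary contributions that could violate the bound. Once this dictionary is in place, the formal uniqueness statement carries all the weight, collapsing an infinite family of invariants onto the few low-genus values required by condition (d).
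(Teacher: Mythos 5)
Your plan follows the paper's own architecture: bubble off $\p^1 \times E$ copies to make the insertions multiplicative, isolate a single point-insertion building block, and pin it down with Proposition~\ref{formel_prop} / Lemma~\ref{formel_prop_strenthening} plus a few low-genus checks. One structural point needs adjustment, however. Properties (b) and (c) are properties of an honest generating series of degree-$2$ invariants of $S \times \p^1$, not of an ad hoc completion of a K3 series: the series the geometry actually provides is $\Delta(q)\,\langle \tau_0(\pt \boxtimes \omega)\tau_0(F\boxtimes\omega)^3\rangle^{S\times\p^1,\bullet}_{g,(\beta_h,2)}$ (four insertions are forced by the dimension constraint in degree $2$), and its localization produces the $(m,n)=(1,1)$ invariant $\langle \BE^{\vee}(1)\frac{\pt}{1-\psi_1}\frac{F}{1-\psi_2}\rangle^S$ \emph{plus} a disconnected contribution \emph{plus} a second fixed locus carrying $\tau_0(\pt)$ and two $F$-descendents; these corrections (the terms $1 + \Theta\cdot D\Theta$ of Lemma~\ref{wer}) must be evaluated separately by degeneration before the uniqueness statement can be invoked. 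This is exactly the ``dictionary'' you flag as the main obstacle, so it is supplied by the paper's Lemma~\ref{wer} rather than left to chance; note also that the paper's choice of the $(1,1)$ case is not cosmetic, since there the conjectural answer is $(\mathbf{G}-1)/(u^4\Delta)$, with the $\Theta^2$ cancelled, i.e.\ an index-$1$ quasi-Jacobi candidate, whereas your target $(\mathbf{G}-1)/(u^2\Theta^2\Delta)$ is meromorphic in $z$. Given your multiplicative step and \eqref{assd} the two reductions are interchangeable, so this part is repairable.

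The genuine gap is your claim that both (b) and (c) are ``extracted from Theorem~\ref{thm_CK}.'' The Ciliberto--Knutsen bound gives strictly less than that. Rationality in $y$ (property (b)) has nothing to do with a genus bound: it is the GW/Pairs correspondence for $S\times\p^1$ in primitive classes, which the paper establishes by the argument of \cite[Proposition 5]{K3xE}. And since CK bounds the genus from \emph{above} for fixed $h$, in stable-pairs language it bounds the Euler characteristic from \emph{below}, yielding $c(h,r)=0$ only for $r < -\sqrt{4h+1}$ (Step 1 of the proof of Proposition~\ref{rgrgreg}). The vanishing on the other side, $r > \sqrt{4h+1}$, is not a geometric finiteness statement at all: the paper deduces it from the symmetry $c(h,r)=c(h,-r)$, which in turn requires a BPS-type expansion of the series in powers of $y^{1/2}+y^{-1/2}$, proved by passing to reduced stable pairs invariants of $S \times E$ and using \cite{ReducedSP} and \cite{PTBPS} (Steps 2 and 3). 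Without this additional machinery your condition (c) is only half-established, and Proposition~\ref{formel_prop} cannot be applied.
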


\subsection{Formal series} \label{Subsection_proof_of_Theorem_1_Case2}
Theorem~\ref{mainthm_1} will follow from the following
evaluation and a degeneration argument.

\begin{thm} \label{thm_middle} $\displaystyle
\Big\langle\, \BE^{\vee}(1) \frac{\pt}{1 - \psi_1} \frac{F}{1 - \psi_2}\, \Big\rangle^{S}
 = 
 \frac{1}{u^4} \, \frac{\mathbf{G}(u,q) - 1}{\Delta(q)}
$
\end{thm}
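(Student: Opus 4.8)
The plan is to prove Theorem~\ref{thm_middle} by showing that the difference of its two sides vanishes, using the formal uniqueness criterion of Proposition~\ref{formel_prop}. Concretely, I would set
\[
\mathsf{F}(u,q) \ = \ u^4 \Delta(q) \Big\langle\, \BE^{\vee}(1)\, \frac{\pt}{1-\psi_1}\, \frac{F}{1-\psi_2}\, \Big\rangle^{S} \ - \ \big( \mathbf{G}(u,q) - 1 \big)
\]
and expand $\mathsf{F}(u,q) = \sum_{m \geq 0} f_m(q) u^m$. By \eqref{Generating_Series_K3_Surfaces} the first term equals $\sum_{g,h} \langle \cdots \rangle_{g,\beta_h}\, u^{2g+2}\, q^h \prod_{k}(1-q^k)^{24}$, an honest power series in $u$ and $q$; since $\mathbf{G}(u,q) = 1 + O(u^2)$ the second term is one as well, and $f_0 = 0$. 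Here the coefficient of $u^{2g+2}$ records the genus-$g$ invariants, so $m = 2g+2$. Proving $\mathsf{F} = 0$ is exactly Theorem~\ref{thm_middle}, and the strategy is to verify hypotheses (a)--(d) of Proposition~\ref{formel_prop} (or its strengthening, Lemma~\ref{formel_prop_strenthening}) with $\sigma = 0$; the vanishing-order propagation internal to Proposition~\ref{formel_prop} is what plays the role of the ``genus induction'' of this section.

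Hypothesis (a), that each $f_m(q)$ lies in $\QMod_m$, I would obtain on the geometric side from the quasi-modularity of the reduced Gromov--Witten series of $S$ after multiplication by $\Delta$, following the techniques of \cite{MPT}; on the other side a direct expansion of $\mathbf{G} = -\Theta^2(\wp + 2C_2)$ shows the coefficient of $u^{2\ell}$ is quasi-modular of weight $2\ell$, which both confirms the identity $\sigma = 0$ and verifies (a) for $\mathbf{G}-1$.

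Hypotheses (b) and (c) are where the geometry enters. They are readily checked for the explicit form $\mathbf{G}-1$ from its $(q,y)$-expansion, so it suffices to establish them for the geometric term, and the key is to pass from $S$ to $S \times \p^1$ in class $(\beta_h,2)$. By degenerating $S$ to the normal cone of an elliptic fiber and applying virtual localization --- exactly as in the proof of Proposition~\ref{Proposition_Thm1m=0}, but now retaining the degree-$2$-over-$\p^1$ contribution carried by the point insertion --- I would identify the $q^n$-coefficient of the geometric term, for the finitely many $h \leq n$ contributing to $q^n$, with genus generating series of reduced invariants of $S \times \p^1$ relative to fibers, in class $(\beta_h,2)$ with incidence conditions of type $\mu_{1,1}$. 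Rationality in $y = -e^{iu}$ (hypothesis (b)) then follows from the Gromov--Witten/stable-pairs correspondence, the stable-pairs series being rational in $y$ \cite{PaPix1, PaPix2}. For the support bound (c) I would specialize to a K3 surface on which every curve of class $\beta_h$ is irreducible and reduced, so that Theorem~\ref{thm_CK} applies: the arithmetic genus of every irreducible $C \subset S\times\p^1$ of class $(\beta_h,2)$ satisfies \eqref{CK_equation}, which for $d=2$ reads $(g+1)^2 \leq 4h+1$. Transporting this through the stable-pairs description, the invariants vanish outside this range, and under the index shift $r = 1-g-2 = -(g+1)$ this is precisely $c(n,r) = 0$ unless $r^2 \leq 4h+1 \leq 4n+1$, as required.

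Hypothesis (d) is then a finite low-genus check: with $\sigma = 0$ one has $B(0) = 6$, so it suffices to confirm $f_2 = f_4 = f_6 = 0$, i.e.\ Theorem~\ref{thm_middle} in genus $g = 0,1,2$; these I would verify directly, the genus-$0$ case from the known reduced genus-$0$ theory and the genus $1,2$ cases by explicit computation of the corresponding $S\times\p^1$ invariants. Granting (a)--(d), Proposition~\ref{formel_prop} yields $\mathsf{F} = 0$. The hard part will be the geometric input behind (b) and (c): converting the Ciliberto--Knutsen bound, which controls the arithmetic genus of honest irreducible curves, into a vanishing statement for the reduced, virtual, and a priori \emph{disconnected} invariants. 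This demands a careful passage through the GW/Pairs correspondence in the reduced setting so that the geometric genus bound governs the support, a check that the irreducibility and reducedness hypotheses of Theorem~\ref{thm_CK} genuinely hold for $\beta_h$ after specialization, and control of multiple-cover and degenerate-map contributions that could a priori violate the naive genus bound. Once this support bound is secured, the formal machinery of Proposition~\ref{formel_prop} converts it, together with the three low-genus values, into the full closed formula.
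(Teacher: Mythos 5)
Your overall strategy is the paper's strategy: form the difference of the two sides, apply the formal uniqueness criterion of Proposition~\ref{formel_prop} with $\sigma = 0$, and close the argument with the genus $0,1,2$ checks (indeed $B(0)=6$, so $m \leq 6$, i.e.\ $g \leq 2$). Your passage from the Hodge-integral series on $S$ to invariants of $S \times \p^1$ in class $(\beta_h,2)$ is also the paper's move, but your description of it is looser than what is actually needed: localization applied to the absolute series $\big\langle \tau_0(\pt \boxtimes \omega)\tau_0(F\boxtimes\omega)^3 \big\rangle^{S\times\p^1,\bullet}_{g,(\beta_h,2)}$ does not return the desired Hodge integral alone, but also a disconnected contribution $1/\Delta$ and a second fixed-locus term $u^4\big\langle \hodge \frac{F}{1-\psi_1}\frac{F}{1-\psi_2}\tau_0(\pt)\big\rangle^S = \Theta\cdot D\Theta/\Delta$; this is exactly the paper's Lemma~\ref{wer}, and the corrections must be separately evaluated (the paper does so by degeneration) and shown to satisfy hypotheses (a)--(c) themselves. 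Since they are quasi-Jacobi-type series this works out, so this part of your plan is a fixable imprecision rather than an error; note also that the GW/Pairs input is applied to absolute descendent invariants, not to relative invariants with conditions $\mu_{1,1}$ as you state.

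The genuine gap is in your verification of hypothesis (c). The Ciliberto--Knutsen bound controls the arithmetic genus of curves from above, hence $\chi(\CO_C) = 1-g$ from below; transported to stable pairs via $\chi(\CF) \geq \chi(\CO_C)$ it yields emptiness of the relevant moduli spaces only for \emph{small} Euler characteristic, i.e.\ vanishing of $c(h,r)$ only for $r < -\sqrt{4h+1}$. It cannot give the vanishing for $r > +\sqrt{4h+1}$: stable pairs spaces with large Euler characteristic are never empty (one can always lengthen $\CF$ by adding points), so no curve-geometry bound controls that side, and your phrase ``the invariants vanish outside this range'' is unjustified for positive $r$. The paper fills this in with two further steps: a BPS-type expansion $\sum_r c(h,r)y^r = \sum_{g} n_{g,h}\,(y^{1/2}+y^{-1/2})^{2g+2}$, proven by a chain of degenerations relating the series to $S \times E$ and invoking \cite{ReducedSP} and \cite{PTBPS}, followed by a finiteness argument upgrading this to the symmetry $c(h,r) = c(h,-r)$, which transports the negative-$r$ vanishing to positive $r$. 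This symmetry is the missing idea in your proposal; your closing paragraph gestures at multiple covers and at the irreducibility hypotheses of Theorem~\ref{thm_CK}, but those are not the real obstruction. Without the $y \leftrightarrow y^{-1}$ symmetry your property (c) is only half proven, and Proposition~\ref{formel_prop} cannot be applied.
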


Let $\omega \in H^2(\p^1)$ be the class of a point, and for $\gamma \in H^{\ast}(S)$ 
let
\[ \gamma \boxtimes \omega = \pi_1^{\ast}(\gamma) \cup \pi_2^{\ast}(\omega) \in H^{\ast}(S \times \p^1) \]
where $\pi_i$ is the projection of $S \times \p^1$ to the $i$th factor. Define the formal series
\begin{equation} \label{Series_F}
\mathsf{F}(u,q)
=
\Delta(q) \cdot \sum_{g\in \BZ} \sum_{h \geq 0}
\big\langle \tau_0(\pt \boxtimes \omega) \tau_0(F \boxtimes \omega)^3 \big\rangle^{S \times \p^1, \bullet}_{g, (\beta_h,2)} u^{2g+2} q^{h-1}
\end{equation}
where the bracket on the right hand side denotes disconnected absolute Gromov-Witten invariants of $S \times \p^1$.

\begin{lemma} \label{wer} With $D = q \frac{d}{dq}$,
\[ \mathsf{F}(u,q) =
u^4 \Delta(q) \Big\langle\, \BE^{\vee}(1) \, \frac{\pt}{1 - \psi_1} \, \frac{F}{1 - \psi_2}\, \Big\rangle^{S}
+ 1 + \Theta(u,q) \cdot D \Theta(u,q) \]
\end{lemma}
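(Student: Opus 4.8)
The plan is to evaluate the disconnected series \eqref{Series_F} by virtual localization with respect to the $\BC^{\ast}$-action on the $\p^1$-factor of $S \times \p^1$ fixing $0,\infty \in \p^1$, paralleling the degree $1$ computation in Proposition~\ref{Proposition_Thm1m=0} but carrying out the more involved degree $2$ analysis. First I would represent the four insertions $\gamma \boxtimes \omega$ by convenient equivariant lifts of the point class over $0$ and $\infty$: since $\omega$ restricts to $0$ over the opposite fixed point, each such lift forces its marked point onto a component lying over the chosen fixed fiber, localizing the graph sum onto a short list of contributing configurations.

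Next I would classify the contributing fixed loci. Because $\beta_h$ is primitive and the reduced obstruction theory makes the ordinary (non-reduced) K3 invariants of any component with nonzero $S$-class vanish, exactly one \emph{distinguished} connected component carries the class $\beta_h$ and sits as a vertex over $S \times \{0\}$, while every other component is contracted by the projection to $S$, i.e. is a vertical cover of a fiber $\{s\} \times \p^1$. The degree $2$ over $\p^1$ is then carried either by two degree-$1$ edges or by a single degree-$2$ edge out of the distinguished vertex. Placing the point insertion $\pt \boxtimes \omega$ and one fiber insertion $F \boxtimes \omega$ over $0$ and the two remaining $F \boxtimes \omega$ over $\infty$, the latter two select and pin the two-edge configuration: each edge must terminate at a point of $S \times \{\infty\}$ lying on a fiber of class $F$.

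Then I would read off the three terms. The generic locus — the distinguished vertex curve over $0$ joined by both degree-$1$ edges to the two constrained points over $\infty$, forming a connected curve of degree $2$ over $\p^1$ — contributes the vertex Hodge class $\BE^{\vee}(1)$ coming from the map deformations in the $\p^1$-direction together with the node-smoothing factors $\tfrac{1}{1-\psi}$ from the two edges; after the $u$-normalization this assembles into $u^4 \Delta(q)\,\langle \BE^{\vee}(1)\tfrac{\pt}{1-\psi_1}\tfrac{F}{1-\psi_2}\rangle^{S}$, with the surviving descendents $\tfrac{\pt}{1-\psi_1}$ and $\tfrac{F}{1-\psi_2}$ on the vertex. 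The degenerate loci, in which one edge detaches from the distinguished curve as a separate vertical component, produce the correction $1 + \Theta\, D\Theta$: the constant $1$ comes from the lowest ($g=-1$) stratum, whose distinguished component is a rational section counted by the Yau--Zaslow series $1/\Delta(q)$ (Proposition~\ref{vanishing1}), the prefactor $\Delta(q)$ yielding the constant; the $\Theta\,D\Theta$ term comes from the stratum with an elliptic vertical component, which I would evaluate after degenerating $S$ to the normal cone of a fiber as in \eqref{degeneration}, reducing it to the $\p^1 \times E$ bracket $\langle \omega | \hodge \tau_0(\pt)\rangle^{\p^1\times E} = D\Theta/\Theta$ of Lemma~\ref{P1xE_Lemma}(c) together with the edge and KKV factors of \eqref{assd} and \eqref{KKV}.

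The main obstacle is the precise bookkeeping of the reduced virtual class across the localization graph sum, together with the exact evaluation of the correction strata. One must verify that the reduced class really forces the distinguished-component structure above, compute the equivariant vertex and edge weights — in particular the Hodge class $\BE^{\vee}(1)$ arising from the $\p^1$-direction and the reduced K3 theory governing the $S$-directions at the vertex — and keep track of the automorphism factors distinguishing the two-degree-$1$-edge and degree-$2$-edge graphs. The delicate point is to show that exactly the two degenerate strata above survive and that, via Proposition~\ref{vanishing1}, Lemma~\ref{P1xE_Lemma} and \eqref{KKV}, they sum to the closed correction $1 + \Theta\, D\Theta$ rather than to a more complicated series.
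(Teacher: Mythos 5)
Your overall route is the same as the paper's---$\BC^{\ast}$-localization on the $\p^1$-factor of the series \eqref{Series_F}, with the disconnected strata supplying the constant and a degeneration of $S$ as in \eqref{degeneration} evaluating the leftover correction---and the ingredients you cite (Proposition~\ref{vanishing1}, Lemma~\ref{P1xE_Lemma}, \eqref{KKV}, \eqref{assd}) are exactly the right ones. But your classification of the fixed loci contains a genuine error. Nothing forces the distinguished component carrying $\beta_h$ to sit over $S \times \{0\}$: fixed loci with that component over \emph{either} fixed fiber occur, and both contribute. With your lifts ($\pt \boxtimes \omega$ and one $F \boxtimes \omega$ over $0$, the two remaining $F \boxtimes \omega$ over $\infty$), the configuration you call the generic locus---vertex over $0$, two edges ending on the $F$-cycles over $\infty$---does \emph{not} produce $u^4 \Delta(q)\,\langle \BE^{\vee}(1)\tfrac{\pt}{1-\psi_1}\tfrac{F}{1-\psi_2}\rangle^{S}$. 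A point insertion lifted to the same fixed fiber as the vertex enters the vertex integral as a stationary insertion $\tau_0(\pt)$, not as a descendent at a node; the node descendents $\tfrac{\gamma}{1-\psi}$ are imposed by the incidence conditions at the \emph{far} ends of the edges, which for this locus are the two $F$-cycles. So this locus actually yields $u^4 \big\langle \BE^{\vee}(1)\,\tfrac{F}{1-\psi_1}\,\tfrac{F}{1-\psi_2}\,\tau_0(\pt)\big\rangle^{S}$---which is precisely the quantity that your cited degeneration argument (\eqref{KKV}, \eqref{assd} used twice, and Lemma~\ref{P1xE_Lemma}(b),(c)) evaluates to $\Theta \cdot D\Theta / \Delta$.

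The main term of Lemma~\ref{wer} instead comes from the connected fixed locus you omitted: the distinguished vertex over $\infty$, carrying the two $F \boxtimes \omega$ insertions (removed by the divisor axiom, $F \cdot \beta_h = 1$), joined by two edges to the $\pt$-cycle and the $F$-cycle over $0$, whose conditions pull back to $\tfrac{\pt}{1-\psi_1}\tfrac{F}{1-\psi_2}$ at the nodes. Correspondingly, your ``degenerate stratum with an elliptic vertical component'' does not exist: every vertical component is a cover of a rational curve $\{s\} \times \p^1$ of class $(0,k)$, hence contributes a $q$-independent factor (in this localization all $q^{h-1}$-dependence sits on the distinguished vertex), so no such stratum can account for the series $\Theta \cdot D\Theta$. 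The genuinely disconnected strata---a genus-$0$ curve of class $(\beta_h,1)$ carrying the three $F \boxtimes \omega$ insertions, together with the vertical line through the point---give only the constant $1$, as you correctly say. Thus two of your three attributions are wrong and one contributing locus is missing; the assembled total happens to coincide with the lemma, but an honest evaluation of your ``generic locus'' would contradict your own claim. The repair is exactly the paper's bookkeeping: disconnected part $= 1/\Delta$, plus the two connected two-edge loci giving the two Hodge integrals, the second of which is then evaluated by the degeneration you describe.
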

\begin{proof}
By Proposition \ref{vanishing1} the contribution from disconnected curves to
\begin{equation} \label{bbb}
\sum_{g\in \BZ} \sum_{h \geq 0} \big\langle \tau_0(\pt \otimes \omega) \tau_0(F \boxtimes \omega)^3 \big\rangle^{S \times \p^1, \bullet}_{g, (\beta_h,2)} u^{2g+2} q^{h-1}
\end{equation}
is $\frac{1}{\Delta(q)}$.\footnote{If the curve is disconnected it must have precisely two components of degree $1$ over $\p^1$ each.
Moreover, one component carries the insertion $\pt \otimes \omega$ and contributes $1$,
the other carries all the insertions $F \boxtimes \omega$ and contributes $\Delta(q)^{-1}$.}
For the contribution from connected curves we apply the localization formula,
specializing $\pt \boxtimes \omega$ and one $F \boxtimes \omega$ insertion to the fiber over $\infty$,
and the other insertions to the fiber over $0 \in \p^1$.
We find \eqref{bbb} equals
\[
\frac{1}{\Delta(q)} +
u^4 \Big\langle \BE^{\vee}(1) \frac{\pt}{1 - \psi_1} \frac{F}{1 - \psi_2} \Big\rangle^{S}
+ u^4 \Big\langle \BE^{\vee}(1) \frac{F}{1 - \psi_1} \frac{F}{1 - \psi_2} \tau_0(\pt) \Big\rangle^{S}.
\]
We evaluate the third term
by degenerating $S$ to a union of $S$ with four bubbles of $\p^1 \times E$,
\[ S \leadsto S \cup (\p^1 \times E) \cup \ldots \cup (\p^1 \times E) \]
where the first three copies of $\p^1 \times E$ receive a single insertion each.
By \eqref{assd}, \eqref{KKV} and Lemma \ref{P1xE_Lemma},
\begin{multline*}
u^4 \Big\langle\, \BE^{\vee}(1) \cdot \frac{F}{1 - \psi_1} \cdot \frac{F}{1 - \psi_2}\, \tau_0(\pt) \Big\rangle^{S}
=
u^4 \blangle \BE^{\vee}(1) \big| 1 \brangle^S \\
\cdot \Big( \blangle \omega \big| \, F / (1-\psi) \, \big| 1 \brangle^{\p^1 \times E} \Big)^2
\cdot \blangle \omega \big| \hodge \tau_0(\pt) \big| 1 \brangle^{\p^1 \times E}
\cdot \blangle \omega \big| \hodge \brangle^{\p^1 \times E} \\
= \frac{\Theta(u,q) D \Theta(u,q)}{\Delta(q)} \,. \qedhere
\end{multline*}
\end{proof}

\begin{prop} \label{rgrgreg}
The series $\mathsf{F}(u,q)$ satisfies properties \textup{(a)}, \textup{(b)}, \textup{(c)} of Proposition~\ref{formel_prop} with $\sigma=0$.
\end{prop}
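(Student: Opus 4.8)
The plan is to verify the three conditions (a), (b), (c) of Proposition~\ref{formel_prop} one at a time, using the closed-form decomposition of $\mathsf{F}$ supplied by Lemma~\ref{wer},
\[
\mathsf{F}(u,q) = u^4\Delta(q)\,\Big\langle\, \hodge\,\frac{\pt}{1-\psi_1}\,\frac{F}{1-\psi_2}\,\Big\rangle^{S} + 1 + \Theta(u,q)\,D\Theta(u,q).
\]
Thus $\mathsf{F}$ is the sum of the manifestly quasi-Jacobi term $1+\Theta\,D\Theta$ and of $u^4\Delta$ times a reduced descendent series of the K3 surface $S$. Since each of (a), (b), (c) is preserved under sums, I would check them on the two summands separately and combine by linearity.

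For property (a) I expand $\Theta(u,q) = u\exp\!\big(\sum_{k\ge 1}(-1)^{k-1}C_{2k}(q)u^{2k}\big)$, so that the coefficient of $u^m$ in $\Theta$ is a quasi-modular form of weight $m-1$ (and vanishes unless $m$ is odd). As $D=q\frac{d}{dq}$ raises weight by $2$, the coefficient of $u^m$ in $1+\Theta\,D\Theta$ lies in $\QMod_m$. For the K3 summand I would invoke the quasi-modularity of the reduced descendent Gromov–Witten theory of K3 surfaces in primitive classes \cite{MPT}: the series $\Delta(q)\sum_{h}\langle\hodge\,\frac{\pt}{1-\psi_1}\frac{F}{1-\psi_2}\rangle^{S}_{g,\beta_h}q^{h-1}$ is quasi-modular, of weight $2g+2$. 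The weight follows from a standard count: the pure $\hodge$-series has weight $2g$, and under the weight assignment for packaged descendents a codimension-$c$ insertion $\frac{\gamma}{1-\psi}$ shifts the weight by $2c-2$, so that $\frac{F}{1-\psi}$ contributes $0$ (as is already visible in Proposition~\ref{Proposition_Thm1m=0}) and $\frac{\pt}{1-\psi}$ contributes $+2$. Since the prefactor $u^4$ turns the coefficient of $u^{2g-2}$ into that of $u^{2g+2}$, the coefficient of $u^m$ of this summand is quasi-modular of weight $m$, giving (a) with $\sigma=0$.

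For properties (b) and (c) I would pass to stable pairs. For each fixed class $(\beta_h,2)$ the Gromov–Witten/Pairs correspondence \cite{MPT, PaPix1, PaPix2} identifies $\sum_{g}\langle\ldots\rangle^{S\times\p^1,\bullet}_{g,(\beta_h,2)}u^{2g+2}$, under $y=-e^{iu}$, with the reduced stable pairs series of $S\times\p^1$ in that class, which is a rational function—in fact a Laurent polynomial—in $y$. As $\Delta(q)$ is a $y$-independent integral $q$-series, the coefficient $\mathsf{F}_n(u)=[\mathsf{F}]_{q^n}$ is a finite $\BZ$-combination of such Laurent polynomials and hence rational in $y$, which is (b). For (c), the nonzero Fourier exponents $r$ correspond, through the Gopakumar–Vafa form, to BPS genera via the index shift $r=1-g-d=-1-g$, so $r^2=(g+1)^2$. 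A nonvanishing genus-$g$ contribution in class $(\beta_h,2)$ requires a genus-$g$ curve $C\subset S\times\p^1$; taking $S$ generic and $\beta_h$ irreducible, as permitted by deformation invariance (cf.\ Proposition~\ref{vanishing1}), Theorem~\ref{thm_CK} yields $(g+1)^2\le 4h+1$. Finally, because $\Delta(q)$ begins in order $q^1$, only classes with $h\le n$ enter $\mathsf{F}_n$, whence $r^2\le 4h+1\le 4n+1$, which is (c).

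The delicate point is (c). Theorem~\ref{thm_CK} constrains only irreducible curves, whereas the disconnected invariants defining $\mathsf{F}$ also see reducible and disconnected curves in class $(\beta_h,2)$, and one must check these do not produce Fourier exponents violating $r^2\le 4n+1$. I expect the cleanest route to be that the extremal exponents (those of largest $|r|$, equivalently largest genus) are forced to come from irreducible curves of maximal arithmetic genus, to which Ciliberto–Knutsen applies directly, while the remaining configurations—built from components of strictly smaller $h$, and in part already isolated in the $1+\Theta\,D\Theta$ term by the splitting described in the footnote to Lemma~\ref{wer}—contribute only interior exponents. Making this support analysis precise, and confirming that the index shift combines with the order-$q^1$ vanishing of $\Delta$ to give exactly $4n+1$ (rather than, say, $4n+5$), is the real content; by comparison the quasi-modularity in (a) reduces to weight bookkeeping once the K3 modularity statement is cited, and (b) is immediate from the rationality half of Gromov–Witten/Pairs.
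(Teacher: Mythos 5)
Your verification of properties (a) and (b) follows the same route as the paper: the decomposition of Lemma~\ref{wer}, quasi-modularity of the K3 descendent series, and the GW/Pairs correspondence for rationality. One small point on (a): the precise weight statement (weight $2g+2$ for the coefficient of $u^{2g+2}$) is quoted in the paper from the refinement \cite[Theorem 9]{BOPY} of \cite{MPT}; your insertion-by-insertion weight bookkeeping is essentially the content of that refinement, and citing \cite{MPT} alone does not quite deliver it. These parts are otherwise fine.

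Property (c) has a genuine gap, and you have located it but not closed it. Your argument identifies the Fourier exponents with BPS genera ``through the Gopakumar--Vafa form'' $\sum_r c(h,r)y^r=\sum_g n_{g,h}(y^{1/2}+y^{-1/2})^{2g+2}$. But this form is not available at this stage: it does not follow from rationality (property (b)), and establishing it is precisely Step 2 of the paper's proof of Proposition~\ref{rgrgreg} --- the longest part of the argument, which proceeds by degenerating $E$ to relate the series to $\blangle \tau_0(F\boxtimes\omega)\brangle^{S\times E,\bullet}$, invoking the GW/Pairs correspondence for $S\times E$ \cite[Prop.~5]{K3xE}, and then the structural results on reduced stable pair invariants of \cite{ReducedSP} and \cite{PTBPS}. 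None of this machinery appears in your proposal. Moreover, your heuristic ``a nonvanishing genus-$g$ contribution requires a genus-$g$ curve'' is not a rigorous step. The paper's rigorous substitute (its Step 1) is a moduli emptiness statement on the stable pairs side: a stable pair $[\CO_X\to\CF]$ of Euler characteristic $n$ satisfies $n=\chi(\CF)\geq\chi(\CO_C)=1-g_a(C)$ for its support curve $C$, the incidence conditions force $C$ to be irreducible and reduced, and Theorem~\ref{thm_CK} then shows the relevant moduli space is empty for $n<2-\sqrt{4h+1}$. Crucially, this controls only the \emph{negative} exponents, giving $c(h,r)=0$ for $r<-\sqrt{4h+1}$: on the positive side there is no such emptiness (stable pairs of arbitrarily large Euler characteristic on a fixed support curve always exist), so no direct ``support analysis'' can rule out large positive $r$. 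The paper obtains the positive-side bound (its Step 3) only from the symmetry $c(h,r)=c(h,-r)$, which is itself a consequence of the Gopakumar--Vafa form of Step 2 together with the vanishing $n_{g,h}=0$ for $g\leq-2$. So the point you defer as ``making the support analysis precise'' is exactly the missing Step 2, and it requires a different set of tools than the ones you list.
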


\begin{proof}
\noindent \textbf{Property (a).}
For $m \in \BZ$ let
\[ f_m(q) = \big[ \, \mathsf{F}(u,q) \, \big]_{u^m} \]
be the coefficient of $u^m$ in $\mathsf{F}(u,q)$.
For odd $m$, $f_m(q)$ vanishes. For even $m$ we have by Lemma~\ref{wer}
\[ f_m(q) = \Delta(q) \sum_{h \geq 0} \Big\langle \BE^{\vee} \, \frac{\pt}{1 - \psi_1} \, \frac{F}{1 - \psi_2} \Big\rangle^{S}_{g, \beta_h} q^{h-1}
+ \delta_{m0} + \big[ \Theta \cdot D \Theta \big]_{u^m} \]
where $m = 2g+2$. By the refinement \cite[Theorem 9]{BOPY} of the quasi-modularity result proven in \cite{MPT},
the first term on the right hand side is a quasi-modular form of weight $2g+2$.
By direct verification the last two terms are also quasi-modular of weight $m$. Hence
\[ f_m(q) \in \QMod_m \,.  \]
This verifies property (a).

\vspace{6pt}
\noindent \textbf{Property (b).}
By an argument parallel to the proof of \cite[Proposition 5]{K3xE}
the GW/Pairs correspondence \cite{PaPix1,PaPix2}
holds for absolute disconnected Gro\-mov-Witten invariants of $S \times \p^1$ in class $(\beta_h,d)$.
In particular, the coefficient of every $q^{h-1}$ in \eqref{bbb}
is the Laurent expansion of a rational function in $y$ under the variable transformation $y = - e^{iu}$.
This implies the claim for $\mathsf{F}(u,q)$.

\vspace{6pt}
\noindent \textbf{Property (c).} For each $h \geq 0$ consider the Laurent expansion
\begin{equation}
\sum_{g} u^{2g+2}
\big\langle \tau_0(\pt \boxtimes \omega) \tau_0(F \boxtimes \omega)^3 \big\rangle^{S \times \p^1, \bullet}_{g, (\beta_h,2)}
 = \sum_{r \in \BZ} c(h,r) y^r \label{u-expansion} \end{equation}
of the rational function in $y = - e^{iu}$.
By the GW/Pairs correspondence\footnote{See Property (b).} we have
\[ c(h,r) = \big\langle \tau_0(\pt \boxtimes \omega) \tau_0(F \boxtimes \omega)^3 \rangle^{S \times \p^1, \text{Pairs}}_{(\beta_h,2), r+2} \,, \]
where the right hand side are reduced
stable pairs invariants of $S \times \p^1$ in class $(\beta_h,2)$ with Euler characteristic $r+2$.

We will prove the vanishing of $c(h,r)$ for $r^2 > 4h+1$ in three steps.

\vspace{4pt}
\noindent \textbf{Step 1.} $c(h,r) = 0$ for $r < -\sqrt{4 h + 1}$.

By deformation invariance we may assume $\beta_h$ is irreducible.
Let $F_i, i=1,2,3$ be generic disjoint smooth submanifolds in class $F$, let $x_1, x_2,x_3 \in \p^1$ be distinct points,
and let $P \in S \times \p^1$ be a generic point.
Let
\[ \mathsf{P}(\beta_h, n) \]
denote the moduli space of stable pairs in $S \times \p^1$
of class $(\beta_h,2)$ Euler characteristic $n$ and whose underlying support curve is incident to $F_i \times x_i$ for $i \in \{ 1,2,3 \}$ and to the point $P$.

We claim $\mathsf{P}(\beta_h, n)$ is empty if $n < 2 -\sqrt{4h+1}$.

Indeed, let $[ \CO_X \to \CF ] \in \mathsf{P}(\beta_h, n)$ with underlying support curve $C$.

If $C$ is disconnected and incident to $P$ and $x_i \times F_i, i=1,2,3$, then
$C$ is a disjoint union of two copies of $\p^1$. Hence,
\[ n = \chi(\CF) \geq \chi(\CO_C) \geq 2 \,. \]

If $C$ is connected, the incidence conditions imply that $C$ is irreducible and reduced.
Then by Theorem~\ref{thm_CK} the arithmetic genus $g = g_a(C) = 1 - \chi(\CO_C)$ satisfies
\[ h \geq g + \alpha( g - \alpha - 1 ) \]
where $\alpha = \floor{g/2}$ which implies $n = \chi(\CF) \geq \chi(\CO_C) \geq 2 - \sqrt{4h+1}$.

Since $n = r+2$, Step 1 is complete.

\vspace{4pt}
\noindent \textbf{Step 2.} There exist an intger $N \geq 0$ and $n_{g,h} \in \BQ$ such that
\[ \sum_r c(h,r) y^r = \sum_{g=-N}^N n_{g,h} (y^{1/2} + y^{-1/2})^{2g+2} \,. \]

\noindent \emph{Proof.} By Lemma~\ref{wer} and the expansion of $\Theta(u,q)$ in $y = -e^{iu}$ it is enough to show that
for all $h$
\begin{equation}
\sum_{g \geq 0} \Big\langle\, \BE^{\vee}(1) \, \frac{\pt}{1 - \psi_1} \, \frac{F}{1 - \psi_2}\, \Big\rangle^{S}_{g,\beta_h} u^{2g+2}
 = \sum_{g=-N}^{N} n_{g,h} (y^{\frac{1}{2}} + y^{-\frac{1}{2}})^{2g+2}
\label{301}
\end{equation}
for some $N$ and some $n_{g,h}$ under the variable change $y = -e^{iu}$.
For this, we will relate the left hand side to the Gromov-Witten invariants of $S \times E$, where $E$ is an elliptic curve.

By degenerating two $\p^1 \times E$-bubbles off from $S$, and by the Katz-Klemm-Vafa formula and \eqref{assd}, we have
\[
u^4 \Big\langle\, \BE^{\vee}(1) \, \frac{\pt}{1 - \psi_1} \, \frac{F}{1 - \psi_2}\, \Big\rangle^{S}
=
\frac{u^2}{\Delta(q)} \Big\langle\, \omega \, \Big| \, \BE^{\vee}(1) \, \frac{\pt}{1 - \psi_1} \Big\rangle^{\p^1 \times E} \,.
\]
On the other side, let $\omega \in H^2(E,\BZ)$ be the class of a point and let
\[
\big\langle\tau_0(F \boxtimes \omega) \big\rangle^{S \times E, \bullet}
=
\sum_h \sum_{g} \blangle \tau_0(F \boxtimes \omega) \brangle^{S \times E, \bullet}_{g, (\beta_h, 1)} u^{2g-2} q^{h-1}
\]
be the generating series of disconnected Gromov-Witten invariants of $S \times E$.
By degenerating $E$ to a nodal curve and resolving we have
\begin{equation} \label{302}
\big\langle\tau_0(F \boxtimes \omega) \big\rangle^{S \times E, \bullet}
= \sum_{\gamma} \big\langle \gamma, \gamma^{\vee} \Big| \tau_0(F \boxtimes \omega) \big\rangle^{S \times \p^1 / \{ 0, \infty \}, \bullet}
\end{equation}
where $\gamma$ runs over a basis of $H^{\ast}(S, \BQ)$ with $\gamma^{\vee}$ the dual basis,
and we have written $\gamma$ for the weighted partition $(1,\gamma)$.
Degenerating the base $\p^1$ to two copies of $\p^1$ with the non-relative point specializing
to one, and the relative marked points specializing to the other, the right hand side of \eqref{302} is
\begin{equation} \label{300}
\sum_{\gamma} \big\langle \gamma, \gamma^{\vee}, F \big\rangle^{S \times \p^1 / \{ 0, 1, \infty\} , \bullet}
+ 24 \big\langle \, \pt \, \Big|\, \tau_0(F \boxtimes \omega) \big\rangle^{S \times \p^1, \bullet} \,.
\end{equation}
By arguments parallel\footnote{We may also use Proposition~\ref{Proposition_take_out_div} below to reduce to Proposition~\ref{vanishing1}.}
to the proof of Proposition~\ref{vanishing1},
only genus $0$ curves contribute to the first term in \eqref{300}. Hence,
\[ \sum_{\gamma} \big\langle \gamma, \gamma^{\vee}, F \big\rangle^{S \times \p^1 / \{ 0, 1, \infty\} , \bullet} = g(q) \]
for some power series $g(q)$ independent of $u$.
By using the Katz-Klemm-Vafa formula for the disconnected,
and the localization formula for the connected part, the second term of \eqref{300} is
\[
\frac{24}{\Theta(u,q)^2 \Delta(q)} +
24 u^2 \Big\langle \BE^{\vee}(1) \frac{\pt}{1-\psi_1} \Big\rangle^S
\]
which by a further degeneration $S \leadsto S \cup (\p^1 \times E)$ is
\[
\frac{24}{\Theta(u,q)^2 \Delta(q)} \left( 1 + u^2 \Big\langle\, \omega \, \Big| \, \BE^{\vee}(1) \, \frac{\pt}{1 - \psi_1} \Big\rangle^{\p^1 \times E} \right)
\]
Combining everything, we have therefore
\begin{multline*}
u^2 \Big\langle\, \omega \, \Big| \, \BE^{\vee}(1) \, \frac{\pt}{1 - \psi_1} \Big\rangle^{\p^1 \times E}
=
\frac{\Theta(u,q)^2 \Delta(q)}{24} \left( \Big\langle\tau_0(F \boxtimes \omega) \Big\rangle^{S \times E, \bullet} - g(q) \right) - 1 
\end{multline*}

By the GW/Pairs correspondence for $S \times E$ in primitive classes, see \cite[Proposition 5]{K3xE},
the series $\langle\tau_0(F \boxtimes \omega) \rangle^{S \times E, \bullet}$
equals a series of reduced stable pair invariants for $X$ under $y = -e^{iu}$.
By \cite[Thm. 1]{ReducedSP} these can be evaluated by the Behrend function weighted Euler characteristic
of the quotient of the moduli space of stable pairs by the translation action by the elliptic curve.
The result now follows from \cite[Thm. 2]{ReducedSP} or
alternatively \cite[Section 4, Appendix]{PTBPS}
(since the classes $(\beta_h, 1)$ are reduced in the sense of \cite{PTBPS}).

\vspace{4pt}
\noindent \textbf{Step 3.} $c(h,r) = 0$ for $r > \sqrt{4 h + 1}$.

\noindent \emph{Proof.} For every $h$, consider the rational function
\[ f(y) = \sum_r c(h,r) y^r = \sum_{g=-N}^N n_{g,h} (y^{1/2} + y^{-1/2})^{2g+2} \,. \]
Substituting $y = -e^{iu}$ and taking the Laurent expansion around $u=0$,
we obtain the equality of formal Laurent series
\[ f(-e^{iu})
= \sum_{g} u^{2g+2} \big\langle \tau_0(\pt \boxtimes \omega) \tau_0(F \boxtimes \omega)^3 \big\rangle^{S \times \p^1, \bullet}_{g, (\beta_h,2)} \,.
\]

By considering a generic K3 surface and a generic choice of cycles representing the incidence conditions,
a direct check shows
\[ \big\langle \tau_0(\pt \boxtimes \omega) \tau_0(F \boxtimes \omega)^3 \big\rangle^{S \times \p^1, \bullet}_{g, (\beta_h,2)} = 0 \]
for $g \leq -2$. Hence, $f(-e^{iu})$ is a power series in $u$:
\[ f(-e^{iu}) = a_0 + a_2 u^2 + a_4 u^4 + \ \ldots , \quad \quad a_i \in \BC \,. \]
Since
\[ (y^{1/2} + y^{-1/2})^{2g+2} = u^{2g+2} + O(u^{2g+4}) \]
this shows $n_{g,h} = 0$ for $g \leq -2$.
Hence, $f(y)$ is a finite Laurent \emph{polynomial} in $y$,
\[ f(y) = \sum_{r=-M}^{M} c(h,r) y^r = \sum_{g=-1}^{N} n_{g,h} (y^{1/2} + y^{-1/2})^{2g+2} \,. \]
Since $f$ is symmetric under $y \mapsto y^{-1}$, we conclude
\[ c(h,r) = c(h,-r) \,. \]
The claim of Step 3 follows now from Step 1 above.

The proof of Property (c) for $\mathsf{F}(u,q)$ is now complete.
\end{proof}

\subsection{Proof of Theorem~\ref{thm_middle}} \label{Subsection_Proof_Thm_middle}
Let $\mathsf{F}(u,q)$ be the formel series defined in \eqref{Series_F}.
By Lemma~\ref{wer} it is enough to show
\begin{equation} \mathsf{F}(u,q) = \mathbf{G}(u,q) + \Theta(u,q) \cdot D \Theta(u,q) \,. \label{utut} \end{equation}
By Proposition \ref{rgrgreg} the left hand side satisfies the properties (a)-(c) of Proposition~\ref{rgrgreg}.
Since we may rewrite
\[ \mathbf{G}(u,q) = \frac{1}{12} \varphi_{0,1}(z,\tau) - 2 C_2(q) \varphi_{-2,1}(z,\tau) \]
where $q = e^{2 \pi i \tau}, u = 2 \pi z$ and $\varphi_{0,1}, \varphi_{-2,1}$ are
the weak Jacobi forms of index~$1$ defined in \cite[Section 9]{EZ},
properties (a)-(c) of Proposition \ref{rgrgreg} hold for $\mathbf{G}$,
and similarly for $\Theta \cdot D \Theta$.\footnote{For example, see \cite[page 105]{EZ} for the crucial coefficient bound.}

Hence by Proposition~\ref{formel_prop} resp. Lemma \ref{formel_prop_strenthening}, we need to check \eqref{utut}
only for the coefficients of $u^m$ where $m \leq 6$,
or equivalently since $m=2g+2$ for genera $0 \leq g \leq 2$.
For this, we may reduce by Lemma \eqref{wer} to
Gromov-Witten invariants of a K3 surface with only fiber and point insertions.
These can be computed for fixed genus by a degeneration argument, see \cite{MPT} or Appendix \ref{Appendix_K3}. \qed

\subsection{Proof of Theorem \ref{mainthm_1}} \label{Subsection_Proof_of_mainthm}
Consider the degeneration of $S$ to the union of $S$ with $m+n+1$ bubbles of $\p^1 \times E$,
\[ S \leadsto S \cup \underbrace{(\p^1 \times E) \cup \ldots \cup (\p^1 \times E)}_{m+n+1} \,. \]
Applying the degeneration formula to
\[  \Big\langle \BE^{\vee}(1)\ \prod_{i=1}^{m} \frac{\pt}{1-\psi_i} \prod_{i=1}^{n} \frac{F}{1-\psi_i} \Big\rangle^{S} \]
with the first $m+n$ copies of $\p^1 \times E$ receiving a single insertion each,
yields by Lemma \ref{P1xE_Lemma}
\begin{multline} \label{pppp}
\Big\langle\ \BE^{\vee}(1)\ \Big| \ 1 \  \Big\rangle^{S}
 \left( \Big\langle\ \omega \ \Big| \ \BE^{\vee}(1) \frac{F}{1-\psi_1} \ \Big| \ 1 \ \Big\rangle^{\p^1 \times E} \right)^{n} \\  
 \cdot \left( \Big\langle\ \omega \ \Big| \ \BE^{\vee}(1) \frac{\pt}{1-\psi_1} \ \Big| \ 1 \ \Big\rangle^{\p^1 \times E} \right)^{m} \,.
\end{multline}
The first term on the right is the Katz-Klemm-Vafa formula \eqref{KKV},
the second term is determined by \eqref{assd}. By solving for the 
third term in case $m=n=1$ using the result of Theorem~\ref{thm_middle} we find
\[ \Big\langle\ \omega \ \Big| \ \BE^{\vee}(1) \frac{\pt}{1-\psi_1} \ \Big| \ 1 \ \Big\rangle^{\p^1 \times E} = \frac{\mathbf{G}(u,q) - 1}{u^2} \,. \]
Inserting everything back into \eqref{pppp} completes the proof. \qed

\subsection{Further invariants}
Theoretically we could use the formal method used above
to evaluate also other Gromov-Witten invariants of $S \times \p^1$ in classes $(\beta_h,2)$.
For example consider the relative invariants
\begin{equation} \label{ffffg}
\Big\langle (1,F)^2 , D(F) , (1,F)^2 \Big\rangle^{S \times \p^1, \bullet}_{g,(\beta_h,2)},
\end{equation}
which, under the GW/Hilb correspondence,
count rational curves in $\Hilb^2(S)$
incident to $2$ fibers of a Lagrangian fibration $\Hilb^2(S) \to \p^2$ \cite{HilbK3}.
The appropriate generating series associated to \eqref{ffffg} satisfies almost all
conditions needed for Proposition \ref{formel_prop}. (Showing property (c) for $r > \sqrt{4n+1}$ requires a BPS expansion
parallel to the one used in Step 2 of the proof of Proposition \ref{rgrgreg} for which
we do not have a full argument at the moment.)
The modular weight $\sigma$ takes the lowest possible value namely $\sigma = -2$.
Therefore we expect \eqref{ffffg}
to be determined by formal properties and the evaluation in genus $0$ alone (which is the Yau-Zaslow formula).

Similarly the space of quasi-Jacobi forms of index~$1$ and weight $-2$ has dimension~$1$, and is spanned by
\[ \varphi_{-2,1}(z,\tau) = \Theta(u,q)^2 \,. \]
By comparision and without any further calculation we find that $\frac{\Theta(u,q)^2}{\Delta(q)}$ is
the generating series for \eqref{ffffg}.
By localization and degeneration the Gromov-Witten invariants of $S \times \p^1$ reduce to
linear Hodge integrals on the K3 surface.
The reasoning above provides some explanation of the ubiquity
of Jacobi forms and particularly of $\Theta(u,q)$
in the enumerative geometry of K3 surfaces.


\section{Relative invariants of $\text{K3} \times \p^1$} \label{Section:Relative_Invariants_of_P1K3}
\subsection{Overview}
The main objective of this section is to prove the GW/Hilb correspondence in degree $2$ (Theorem \ref{GWHilb_thm}).
In \cite{HilbK3} the full genus~$0$ three point theory of $\Hilb^2(S)$ for primitive classes
has been determined by calculating first five basic cases,
and then applying the WDVV equations repeatedly to solve for all other invariants.
Here we follow a similar approach. In Section~\ref{Section_Relations} we first show the WDVV equations
for genus $0$ invariants on $\Hilb^d(S)$
is compatible with a corresponding set of equations for the relative invariants of $S \times \p^1$
obtained from the degeneration formula.
In Section \ref{Section_Proof_Of_MainThm_1b}, independently from the rest, we prove Theorem \ref{mainthm_1b}.
In Section~\ref{Section_Special_cases_in_degree_2} we use a combination of standard methods and Theorem~\ref{mainthm_1}
to calculate the same five basic series as in the $\Hilb^d(S)$ case.
Since these series match those on the $\Hilb^d(S)$ side this completes the proof.
Finally in Section~\ref{Section_Proof_of_SxE_Theorem} we prove Theorem~\ref{Theorem_K3xE}.

Throughout the section we will repeatedly use the localization
and the degeneration formula, see for example \cite{GP, GV, FPM} and \cite{Junli1, Junli2, MP}.

\subsection{Relations} \label{Section_Relations}
Let $S$ be a K3 surface.
Let $\{ \gamma_i \}_i$ be a fixed basis of $H^{\ast}(S)$.
We identify a partition $\mu = \{ (\mu_j, \gamma_{i_j}) \}$ weighted by the basis $\{ \gamma_i \}$
with the class
\[ \frac{1}{\Fz(\mu)} \prod_j \Fp_{-\mu_j}(\gamma_{i_j}) v_{\varnothing} \in H^{\ast}(\Hilb^{|\mu|}(S)) \]
on the Hilbert scheme, where $\Fz(\mu) = |\Aut(\mu)| \prod_{i}\mu_i$. Let also
$\deg(\mu)$ denote the complex cohomological degree of $\mu$ in $\Hilb^d(S)$,
\[ \mu \in H^{2 \deg(\mu)}(\Hilb^d(S)) \,. \]

Since $\{ \gamma_i \}$-weighted partitions of size $d$ form a basis for the 
cohomology of $\Hilb^d(S)$, the cup product $\mu \cup \nu$ of cohomology weighted partitions $\mu, \nu$
can be uniquely expressed as a formal linear combination of weighted partitions:
\[ \mu \cup \nu = \sum_{\lambda} c_{\mu \nu}^{\lambda} \lambda \]
where the sum runs over all weighted partitions of size $|\mu|$ and $c_{\lambda} \in \BQ$ are coefficients.
When $\mu$ or $\nu$ are divisor classes on $\Hilb^d(S)$, explicit formulas for $\mu \cup \nu$ are surveyed in \cite{Lehn}.

Let $\mu, \nu, \rho$ be cohomology weighted partitions of size $d$,
and let $\beta \in H_2(S,\BZ)$ be a curve class.
We will require the modified bracket
\[
\big\langle \, \mu ,\nu , \rho \, \big\rangle^{S \times \p^1, \star}_{\beta}
 =
 (-iu)^{l(\mu) + l(\nu) + l(\rho) - d} \sum_{g \in \BZ} \big\langle \, \mu ,\nu , \rho \, \big\rangle^{S \times \p^1,\bullet}_{g, (\beta,d)} u^{2g-2} .
\]
where the bracket on the right hand side denote
disconnected Gromov-Witten invariants of $S \times \p^1 / \{ 0,1,\infty\}$
with relative insertions $\mu,\nu,\rho$.
Since the degree $d$ is determined by the partition,
it is omitted in the notation from the left hand side.
When the entries $\mu, \nu, \rho$ are formal linear combination of cohomology weighted partitions,
the bracket $\langle \mu, \nu, \rho \rangle^{S \times \p^1, \star}$ is defined by multilinearity

\begin{prop} \label{Proposition_WDVV_analog}
Let $\lambda_1, \dots, \lambda_4$ be cohomology weighted partitions of size $d$ weighted by the fixed basis $\{ \gamma_i \}$,
such that $\sum_i \deg(\lambda_i) = 2d+1$. Then
\begin{multline*}
\blangle \lambda_1, \lambda_2, \lambda_3 \cup \lambda_4 \brangle_{\beta}^{S \times \p^1, \star}
+ \blangle \lambda_1 \cup \lambda_2, \lambda_3, \lambda_4 \brangle_{\beta}^{S \times \p^1, \star} \\
=
\blangle \lambda_1, \lambda_4, \lambda_2 \cup \lambda_3 \brangle_{\beta}^{S \times \p^1, \star}
+ \blangle \lambda_1 \cup \lambda_4, \lambda_2, \lambda_3 \brangle_{\beta}^{S \times \p^1, \star} \,.
\end{multline*}
\end{prop}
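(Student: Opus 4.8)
The plan is to obtain the identity as the image, under the degeneration formula, of the rational equivalence of the three boundary points of $\overline{M}_{0,4}$. First I would introduce a \emph{fourth} relative divisor: fix the three fibers over $0,1,\infty$ carrying the conditions $\lambda_1,\lambda_2,\lambda_3$, and place $\lambda_4$ over a fiber $S_p$ with $p$ allowed to move. Remembering the four points $(0,1,\infty,p)$ gives a map from the relative moduli space to $\overline{M}_{0,4}\cong\p^1$, and the three boundary points $p\to 0$, $p\to 1$, $p\to\infty$ are rationally equivalent. Pulling this equivalence back to the relative invariants produces three equal expressions; I will show that the limits $p\to\infty$ and $p\to 0$ reproduce exactly the left- and right-hand sides of the Proposition (the pairs that collide being $\{1,2\}\mid\{3,4\}$ and $\{1,4\}\mid\{2,3\}$ respectively), using that the $\blangle\,\cdot\,,\cdot\,,\cdot\,\brangle^{S\times\p^1,\star}$ bracket is symmetric in its three slots.

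At each boundary point the base $\p^1$ breaks into two components glued along a fiber $S$, so the target degenerates to a chain of two copies of $S\times\p^1$ meeting along a common fiber. The degeneration formula then expresses the boundary term as a sum over the node profile $\eta$ (a cohomology weighted partition of size $d$, with Poincaré-dual $\eta^\vee$) of a product of two three-pointed relative invariants, one on each component; with the paper's $\Fz$-normalization and the $(-iu)$ factors in the $\star$-bracket, the gluing pairing is the ordinary Poincaré pairing on $H^{\ast}(\Hilb^d(S))$. I will use the \emph{disconnected} invariants throughout precisely so that this gluing is multiplicative.

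The crucial point is the reduced degeneration formula (as in \cite{MPT}, \cite{BOPY}) combined with the dimension constraint $\sum_i\deg(\lambda_i)=2d+1$. Because the holomorphic two-form forces ordinary invariants in nonzero $S$-class to vanish, the full $S$-class $\beta$ must sit on a single component, which then carries the reduced class; the complementary component carries $S$-class $0$ and the ordinary class. A degree count shows the reduced factor needs insertion degree $2d+1$, whence the $\beta=0$ factor is left with insertion degree $\deg(\eta^\vee)+\deg(\lambda_i)+\deg(\lambda_j)=2d$. Since any reduced invariant in a nonzero class (including the exceptional multiples $\ell A$) requires degree $2d+1$, every quantum correction on this factor vanishes for dimension reasons, leaving only the classical triple intersection $\int_{\Hilb^d(S)}\eta^\vee\cup\lambda_i\cup\lambda_j$. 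Summing over $\eta$ by Poincaré completeness collapses the node-sum to a single bracket, e.g.
\[
\sum_{\eta}\blangle \lambda_1,\lambda_2,\eta\brangle^{S\times\p^1,\star}_{\beta}\cdot\Big(\int_{\Hilb^d(S)}\eta^{\vee}\cup\lambda_3\cup\lambda_4\Big)=\blangle \lambda_1,\lambda_2,\lambda_3\cup\lambda_4\brangle^{S\times\p^1,\star}_{\beta}.
\]
The two choices of which component is reduced give the two terms of each side, so the $p\to\infty$ limit equals the left-hand side and the $p\to 0$ limit equals the right-hand side, and rational equivalence finishes the proof.

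I expect the main obstacle to be the bookkeeping of the reduced degeneration formula: verifying that the reduced class breaks as (reduced on the $\beta$-component) $\times$ (ordinary on the $\beta=0$ component) with no cross terms, that the $(-iu)$ normalization makes the classical factor genuinely $u$-independent so it reads off a cup-product structure constant, and that the surviving classical term is indeed $\int_{\Hilb^d(S)}\eta^\vee\cup\lambda_i\cup\lambda_j$ rather than a corrected pairing. The dimension argument eliminating the exceptional-curve corrections is the key mechanism and should be stated carefully, as it is exactly what turns the would-be full WDVV sum into the two-term cup-product relation.
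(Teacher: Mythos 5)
Your overall strategy coincides with the paper's: the paper likewise forms the four-point relative geometry $(S\times\p^1)/\{S_0,S_1,S_\infty,S_t\}$ (its series \eqref{4ptseries}) and applies the degeneration formula to it in two ways (fibers $\{0,1\}$ versus $\{t,\infty\}$ on the two components, and then $\{0,t\}$ versus $\{1,\infty\}$), which is your $\Mbar_{0,4}$ rational-equivalence argument in different words; the splitting of the reduced class, the forced vanishing unless $\beta_1=0$ or $\beta_2=0$, and the collapse of the node sum via the Poincar\'e pairing on $H^{\ast}(\Hilb^d(S))$ all appear identically.

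The gap is in the step you yourself single out as the key mechanism: the evaluation of the $\beta=0$ factor. Your justification --- ``any reduced invariant in a nonzero class requires degree $2d+1$, so every quantum correction vanishes for dimension reasons'' --- does not apply, because (as you correctly note earlier) the $\beta=0$ component carries the \emph{ordinary} virtual class, not the reduced one. For the ordinary class, genus-$0$ three-point invariants of $\Hilb^d(S)$ in a class $\ell A$ have virtual dimension $2d-3+3=2d$ (since $c_1=0$), which matches the available insertion degree $\deg(\eta^\vee)+\deg(\lambda_i)+\deg(\lambda_j)=2d$ exactly; dimension kills nothing. Concretely, on the Gromov--Witten side this factor is the full genus sum $\sum_{g_2}\blangle \eta^\vee,\lambda_i,\lambda_j\brangle^{S\times\p^1,\bullet}_{g_2,(0,d)}u^{2g_2-2}$ of ordinary relative invariants in class $(0,d)$; these do not vanish at any fixed genus (multiple covers of the vertical lines $\{x\}\times\p^1$ contribute), and the assertion that the whole $u$-series is constant in $u$ and equals $(-1)^d\int_{\Hilb^d(S)}\eta^\vee\cup\lambda_i\cup\lambda_j$ is a genuine theorem, not a dimension count. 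This is the paper's equation \eqref{non-red-cor}, which it imports from the local case \cite[Section 4.3]{OPHilb}: there the quantum corrections to the three-point functions of $\Hilb^d(\BC^2)$ are divisible by the equivariant weight $t_1+t_2$ of the symplectic form, and this weight globalizes to $c_1(S)=0$ on the K3. Arguing instead on the Hilbert-scheme side that ordinary invariants in class $\ell A$, $\ell\neq 0$, vanish --- which is true, but by the holomorphic-symplectic cosection/deformation argument, again not by dimension --- does not close the gap either, since identifying the fiber-class GW series of $S\times\p^1$ with the $\ell A$-series on $\Hilb^d(S)$ is precisely the local GW/Hilb correspondence you would be assuming. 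With \eqref{non-red-cor} supplied as input, the rest of your argument goes through as in the paper.
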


\begin{proof}
Consider Gromov-Witten invariants of $S \times \p^1$ relative to fibers over the points $0,1, \infty, t \in \p^1$,
\begin{equation}
\label{4ptseries} (-iu)^{-2d + \sum_i l(\lambda_i)} \sum_g
\blangle \lambda_1, \lambda_2, \lambda_3, \lambda_4 \brangle_{g,(\beta,d)}^{(S \times \p^1) / \{ 0, 1, \infty, t\}, \bullet}
u^{2g-2} \,.
\end{equation}
Consider the degeneration of $S \times \p^1$ obtained by degenerating the base $\p^1$ to a union of two copies of $\p^1$,
\[ S \times \p^1 \leadsto (S \times \p^1) \cup (S \times \p^1). \]
We assume the fibers over $0,1$ specialize to the first and the fibers over $t, \infty$ specialize to
the second component respectively.
We will apply the degeneration formula to \eqref{4ptseries}.
Since the reduced class breaks into a product of a reduced class and an ordinary virtual class,
we must have either $\beta_1 = 0$ or $\beta_2 = 0$ in the splitting $\beta = \beta_1 + \beta_2$
of the curve class.
The result of the degeneration formula is
\begin{multline} \label{4ptseries_broken}
\sum_{g_1, g_2} \sum_{\beta=\beta_1 + \beta_2} \sum_{\eta}
\langle \lambda_1, \lambda_2, \eta \brangle^{S \times \p^1, \bullet}_{g_1, \beta_1} \\
\cdot \Fz(\eta) \blangle \eta^{\vee}, \lambda_3, \lambda_4 \brangle_{g_2,\beta_2}^{S \times \p^1, \bullet}
(-iu)^{-2d + \sum_i l(\lambda_i)} u^{2(g_1 + g_2 + l(\eta) - 1)-2}
\end{multline}
where $g_1, g_2$ run over all integers,
we have $\beta_1 = 0$ or $\beta_2 = 0$,
$\eta$ runs over all $\{ \gamma_i \}$-weighted partitions of size $d$,
and $\eta^{\vee}$ is the dual partition\footnote{If $\eta = \{ (\eta_i, \gamma_{s_i}) \}$, then $\eta^{\vee} = \{ (\eta_i, \gamma_{s_i}^{\vee}) \}$
where $\{ \gamma_i^{\vee} \}$ is the basis dual to $\{ \gamma_i \}$ with respect to the intersection pairing on $H^{\ast}(S,\BQ)$.}.
Above, we also have used the genus glueing relation
\[ g = g_1 + g_2 + l(\eta) - 1 \,, \]
and have followed the notation (explained in Section \ref{Section_The_bracket_notation}) that we use a reduced class whenever
the K3 factor of the curve class is non-zero, and the usual virtual class otherwise.

Consider the basis of $H^{\ast}(\Hilb^d(S))$ defined by the set 
of all $\{ \gamma_i \}$-weighted partitions $\eta$ of size $d$.
The corresponding dual basis with respect to the intersection pairing on $H^{\ast}(\Hilb^d(S))$ is
$\{ (-1)^{d + l(\eta)} \Fz(\eta) \eta^{\vee} \}$.
Hence for every $\alpha \in H^{\ast}(\Hilb^d(S))$,
\[ \alpha = \sum_{\eta} (-1)^{d + l(\eta)} \Fz(\eta) \langle \alpha, \eta^{\vee} \rangle \eta \,. \]
We will also require the following evaluation of (non-reduced) relative invariants of $S \times \p^1$ in class $(0,d)$,
\begin{equation}
\label{non-red-cor} (-iu)^{-d + \sum_i l(\lambda_i)} \sum_g \langle \lambda_1, \lambda_2, \lambda_3 \rangle^{S \times \p^1, \bullet}_{g, (0,d)} u^{2g-2}
 = (-1)^d \int_{\Hilb^d(S)} \lambda_1 \cup \lambda_2 \cup \lambda_3 \,.
\end{equation}
for all weighted partitions $\lambda_1, \lambda_2, \lambda_3$. Equality \eqref{non-red-cor}
follows directly from the corresponding local case, see \cite[Section 4.3]{OPHilb}.

Putting everything together, \eqref{4ptseries_broken} and hence \eqref{4ptseries} are equal to
the left hand side of Proposition \ref{Proposition_WDVV_analog}, namely
\[
\blangle \lambda_1, \lambda_2, \lambda_3 \cup \lambda_4 \brangle^{S \times \p^1, \star}_{\beta}
+ \blangle \lambda_1 \cup \lambda_2, \lambda_3, \lambda_4 \brangle^{S \times \p^1, \star}_{\beta} \,.
\]
Since by a parallel argument (with $0,t$ specializing to the first, and $1,\infty$ specializing to the second component)
we also find \eqref{4ptseries} to equal the right hand side of Proposition \ref{Proposition_WDVV_analog}.
\end{proof}
From Proposition \ref{Proposition_WDVV_analog} and \cite[Appendix A]{HilbK3} we obtain the following.
\begin{cor} Under the correspondence of Conjecture \ref{GW/Hilb_correspondence},
the reduced WDVV equation on $\Hilb^d(S)$ corresponds to
the degeneration relations of Proposition \ref{Proposition_WDVV_analog}
\end{cor}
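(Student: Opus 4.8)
The plan is to place the reduced WDVV equation on $\Hilb^d(S)$ and the degeneration relation of Proposition~\ref{Proposition_WDVV_analog} side by side and verify, term by term, that they agree under the correspondence of Conjecture~\ref{GW/Hilb_correspondence}. First I would recall from \cite[Appendix A]{HilbK3} the explicit shape of the reduced WDVV equation: it is the associativity relation for the reduced quantum product on $\Hilb^d(S)$, obtained by pulling back to $\Mbar_{0,4} \cong \p^1$ the equality of the two boundary points corresponding to the splittings $\{12|34\}$ and $\{14|23\}$. For four classes $\lambda_1, \dots, \lambda_4$ it expresses a sum over an intermediate basis element $\eta$ of $H^{\ast}(\Hilb^d(S))$, weighted against its Poincar\'e dual, of products of two three-point reduced invariants, equated with the analogous sum coming from the other boundary point. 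Because an effective class splits as $\beta + kA = (\beta_1 + k_1 A) + (\beta_2 + k_2 A)$ with $\beta_1, \beta_2$ the K3 parts, and the reduced virtual class forces exactly one of $\beta_1, \beta_2$ to vanish, the degree-zero factor collapses to a classical cup product $\lambda_i \cup \lambda_j$, exactly the cup products appearing in Proposition~\ref{Proposition_WDVV_analog}.

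Next I would apply Conjecture~\ref{GW/Hilb_correspondence} to every three-point invariant occurring. Under $y = -e^{iu}$, each series $\sum_k \langle \mu, \nu, \rho \rangle^{\Hilb^d(S)}_{0, \beta + kA} y^k$ is replaced, up to the factor $(-1)^d (-iu)^{l(\mu) + l(\nu) + l(\rho) - d}$, by the disconnected relative series $\langle \mu, \nu, \rho \rangle^{S \times \p^1, \star}_{\beta}$ of $S \times \p^1$. This turns the reduced WDVV equation into an identity purely among the $\star$-brackets of $S \times \p^1$.

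Then I would match this with the degeneration relation produced inside the proof of Proposition~\ref{Proposition_WDVV_analog}. The two structural ingredients are already present there. The intermediate summation $\sum_{\eta} \Fz(\eta) \langle \lambda_1, \lambda_2, \eta \rangle \langle \eta^{\vee}, \lambda_3, \lambda_4 \rangle$ in \eqref{4ptseries_broken} is precisely the diagonal splitting of the WDVV relation: by the dual-basis formula $\{ (-1)^{d + l(\eta)} \Fz(\eta) \eta^{\vee} \}$ recorded in that proof, the gluing sum reproduces the insertion of the Poincar\'e-dual diagonal on $\Hilb^d(S)$. The degree-$(0,d)$ contributions, where one K3 part vanishes, are evaluated by \eqref{non-red-cor} as $(-1)^d \int_{\Hilb^d(S)} \lambda_i \cup \lambda_j \cup (\cdot)$, which is exactly how the cup products $\lambda_i \cup \lambda_j$ enter both sides. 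With these two dictionaries in place, the translated reduced WDVV equation and the relation of Proposition~\ref{Proposition_WDVV_analog} become literally the same identity.

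The main obstacle is the bookkeeping of signs and normalizations: the correspondence carries $(-1)^d$ and powers of $-iu$, the gluing weights carry the sign $(-1)^{d + l(\eta)}$ together with the factors $\Fz(\eta)$, and the dual-basis convention on $\Hilb^d(S)$ must be reconciled with the intersection pairing used in the degeneration formula. One must check that these combine consistently so that no spurious sign or power of $u$ survives. Since \cite[Appendix A]{HilbK3} already packages the reduced WDVV equation in the same weighted-partition bracket conventions used here, this verification is a finite check rather than new geometric input, and the corollary follows.
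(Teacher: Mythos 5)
Your proposal is correct and takes essentially the same route as the paper, whose entire justification is the one-line observation that the relation of Proposition~\ref{Proposition_WDVV_analog} has, term by term, exactly the shape of the reduced WDVV equation of \cite[Appendix A]{HilbK3} once each three-point bracket is translated via Conjecture~\ref{GW/Hilb_correspondence}. Your expanded account of the mechanism (the dual-basis/gluing normalization, the collapse of the vanishing-K3-part factors to cup products via \eqref{non-red-cor}, and the finite sign and $(-iu)$-power bookkeeping) is precisely the comparison the paper leaves implicit.
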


For weighted partitions $\mu, \nu, \rho$ of size $d$, let
\[
\big\langle \, \mu ,\nu , \rho \, \big\rangle^{S \times \p^1,\bullet}_{\beta}
= 
u^{l(\mu) + l(\nu) + l(\rho) - d}
\sum_{g \in \BZ}
\langle \mu , \nu , \rho \brangle^{S \times \p^1/ \{ 0,1,\infty \}, \bullet}_{g, (\beta,d)}
u^{2g - 2}
\]
\begin{prop} \label{Proposition_take_out_div} For all $\gamma, \gamma' \in H^2(S, \BQ)$ and all weighted partitions $\mu, \nu$ of size $d$,
\begin{align*}
\langle \beta, \gamma' \rangle \cdot \big\langle \, \mu ,\nu , D(\gamma) \, \big\rangle_{\beta}^{S \times \p^1,\bullet}
& = \langle \beta, \gamma \rangle \cdot \big\langle \, \mu ,\nu , D(\gamma') \, \big\rangle_{\beta}^{S \times \p^1,\bullet} \\
\langle \beta, \gamma \rangle \cdot \big\langle \, \mu ,\nu , (2, \e)(1, \e)^{d-2} \, \big\rangle_{\beta}^{S \times \p^1,\bullet}
& = \frac{d}{du} \, \big\langle \, \mu ,\nu , D(\gamma) \, \big\rangle_{\beta}^{S \times \p^1,\bullet}
\end{align*}
\end{prop}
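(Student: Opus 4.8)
The plan is to derive both identities from the divisor and dilaton equations of relative Gromov–Witten theory, specialized to $(S\times\p^1)/\{S_0,S_1,S_\infty\}$ and the reduced class; the reduction only trivializes a factor in the $S$-direction, so the $\p^1$-directional arguments below are insensitive to it. The mechanism is to ``take out'' a divisor class: inserting a pullback divisor at an auxiliary marked point produces the intersection number with $(\beta,d)$ as a leading term, together with correction terms in which the point degenerates onto one of the relative fibers. The two identities separate according to whether the divisor is pulled back from $S$ (horizontal) or from $\p^1$ (vertical): the horizontal case decorates the cohomology weight of a relative point (producing $D(\gamma)$) while the vertical case raises a relative multiplicity (producing the part of size $2$).

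For the first identity I would apply the divisor equation to the horizontal class $\pi_1^{\ast}\gamma$, for which $\pi_1^{\ast}\gamma\cdot(\beta,d)=\langle\beta,\gamma\rangle$ and $\pi_1^{\ast}\gamma|_{S_z}=\gamma$. The leading term is $\langle\beta,\gamma\rangle\,\langle\mu,\nu,(1,\e)^d\rangle_\beta$, and the correction over $S_\infty$ cups $\gamma$ onto one of the unit-weighted relative points, i.e.\ produces exactly $\langle\mu,\nu,D(\gamma)\rangle_\beta$ (with corrections over $S_0,S_1$ cupping $\gamma$ into $\mu,\nu$). Comparing the resulting equations for $\gamma$ and for $\gamma'$ and matching the common corrections yields the first identity. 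A cleaner alternative, valid whenever the monodromy can be taken to fix $\mu$ and $\nu$, is to observe that $\gamma\mapsto\langle\mu,\nu,D(\gamma)\rangle_\beta$ is a deformation-invariant linear functional on $H^2(S,\BQ)$; since the monodromy of the K3 fixing $\beta$ acts on $\beta^{\perp}$ with no nonzero invariants, the functional must be a multiple of $\langle\beta,\,\cdot\,\rangle$, which is precisely the asserted proportionality.

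For the second identity the derivative $\tfrac{d}{du}$ records the exponent $2g-2+l(\mu)+l(\nu)+l(\rho)-d$ carried by $u$, which for the profile $(1^d)$ of $D(\gamma)$ equals $2g-2+l(\mu)+l(\nu)$. I would realize this derivative through the (relative) dilaton equation, so that $\tfrac{d}{du}\langle\mu,\nu,D(\gamma)\rangle_\beta$ becomes the insertion of a dilaton field at the relative fiber over $\infty$. Geometrically this collides the $\gamma$-weighted point with a neighbouring relative point: the vertical direction raises the corresponding part from $1$ to $2$, converting the profile into $(2,\e)(1,\e)^{d-2}$, while the horizontal weight $\gamma$ is simultaneously removed exactly as in the first identity, contributing the scalar $\langle\beta,\gamma\rangle$. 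The automorphism and multiplicity factors $\Fz$ of the unordered weighted-partition conventions are what turn the single collision into the symmetric insertion $(2,\e)(1,\e)^{d-2}$, and the Euler-characteristic factor of the dilaton equation, corrected for the $d$ relative points over $\infty$ that are consumed in forming the tangency, reproduces $2g-2+l(\mu)+l(\nu)$.

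The main obstacle is the bookkeeping in the second identity: correctly isolating the relative dilaton/divisor corrections on the rubber over $S_\infty$, fixing their signs, and verifying that the combinatorial $\Fz$-weights assemble the tangency-$2$ profile with coefficient exactly $1$ and Euler factor exactly $2g-2+l(\mu)+l(\nu)$ rather than $2g-2+l(\mu)+l(\nu)+d$. The first identity is comparatively routine once the horizontal divisor equation is set up, and the passage to the reduced class in both cases is harmless because all the degenerations and $\psi$-class manipulations take place in the $\p^1$-factor, away from the symplectic form on $S$ responsible for the reduction.
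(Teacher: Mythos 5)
Your proposal does not reconstruct the paper's argument, and as written it has genuine gaps in both halves. The paper's proof is a one-line appeal to rubber calculus (citing \cite[Prop.~4.3]{M} and \cite{MP}), and the substance of that technique is exactly the idea your proposal lacks: both identities factor through an intermediate object, the \emph{rubber} two-point series $\langle \mu \, | \, \nu \rangle^{\sim}_{\beta}$ of $S \times \p^1$. Rigidification of the rubber gives
$\big\langle \mu, \nu, D(\gamma) \big\rangle^{S\times\p^1,\bullet}_{\beta} = \langle \beta,\gamma\rangle \, \langle \mu | \nu \rangle^{\sim}_{\beta}$,
and the rubber dilaton equation combined with the target $\Psi$-class/boundary relations gives
$\big\langle \mu, \nu, (2,\e)(1,\e)^{d-2} \big\rangle^{S\times\p^1,\bullet}_{\beta} = \frac{d}{du} \langle \mu | \nu \rangle^{\sim}_{\beta}$;
the Proposition is an immediate formal consequence of these two facts. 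The rubber picture is also what resolves the two problems you yourself flag: the factor $\langle\beta,\gamma\rangle$ and the disappearance of the $d$ relative points over $\infty$ from the Euler factor both come from the rigidification step, after which the dilaton count $2g-2+l(\mu)+l(\nu)$ is the honest one for the rubber geometry, whose only special points are the $l(\mu)+l(\nu)$ relative markings.

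Concretely, your first-identity argument fails at the step ``comparing the resulting equations for $\gamma$ and for $\gamma'$ and matching the common corrections'': the two divisor equations have different left-hand sides ($\tau_0(\pi_1^{\ast}\gamma)$ versus $\tau_0(\pi_1^{\ast}\gamma')$) and different correction terms ($\gamma$, resp.\ $\gamma'$, cupped into $\mu$ and $\nu$), and these are a priori unrelated quantities of the same complexity as what you are trying to prove; no linear combination of the two equations isolates $\langle\beta,\gamma'\rangle\langle\mu,\nu,D(\gamma)\rangle - \langle\beta,\gamma\rangle\langle\mu,\nu,D(\gamma')\rangle$. Your monodromy alternative is sound in spirit but, as you concede, applies only when the weights of $\mu$ and $\nu$ are themselves monodromy invariant, whereas the Proposition asserts the identity for arbitrary weighted partitions; upgrading it to the general multilinear statement is a different (invariant-theoretic) argument that you do not supply. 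For the second identity you correctly compute that $\frac{d}{du}$ pulls down $2g-2+l(\mu)+l(\nu)$ while a naive dilaton insertion on the rigid three-divisor geometry would produce $2g-2+l(\mu)+l(\nu)+d$, and you correctly state that a $\tau_1(\e)$-type insertion must somehow be converted into the ramification profile $(2,\e)(1,\e)^{d-2}$ with coefficient one --- but you provide no mechanism for either, labeling them ``the main obstacle.'' Those conversions are precisely the content of the rubber calculus of \cite{MP} and \cite[Prop.~4.3]{M}; naming the obstacles is not the same as overcoming them, so the proposal as it stands is not a proof.
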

\begin{proof} By a rubber calculus argument, see for example \cite[Prop. 4.3]{M} or \cite{MP}.
\end{proof}

%

\subsection{Elliptic K3 surfaces}
In the remainder of Section~\ref{Section:Relative_Invariants_of_P1K3}
let $S$ be an elliptically fibered K3 surface with section,
let $B$ and $F$ be the section and fiber class respectively, and let
$\beta_h = B +hF$ for all $h \geq 0$.

For $H^{\ast}(S)$-weighted partitions $\mu, \nu, \rho$ of size $d$, we set
\begin{multline} \label{rel_gen_ser}
\big\langle \, \mu ,\nu , \rho \, \big\rangle^{S \times \p^1,\bullet}
= \ 
u^{l(\mu) + l(\nu) + l(\rho) - d} \sum_{g \in \BZ} \sum_{h \geq 0}
\langle \mu , \nu , \rho \brangle^{S \times \p^1/ \{ 0,1,\infty \}, \bullet}_{g, (\beta_h,d)}
u^{2g - 2} q^{h-1}
\end{multline}
for the generating series of disconnected Gromov-Witten invariants of $S \times \p^1 / \{ 0,1, \infty\}$,
and the same except without $\bullet$ for connected invariants.

\subsection{Proof of Theorem \ref{mainthm_1b}} \label{Section_Proof_Of_MainThm_1b}
Let $\mu_{m,n}, \nu_{m,n}$ be the weighted partitions defined in \eqref{Rel_Conditions}.
For the proof we will drop the subscript $n$ and simply write $\mu_m = \mu_{m,n}$, etc.
Let also $\rho_m = \{ (1,F) (1, \e)^{m+n-1} \}$.

Let $n>0$ first. By a degeneration argument the \emph{connected} invariants satisfy
\[
\big\langle \mu_m , \nu_m, \rho_m \big\rangle^{S \times \p^1/ \{ 0,1,\infty \}}_{g,(m+n,\beta_h)}
= \frac{1}{n!} \big\langle \mu_m \big| \tau_{0}(F \boxtimes \omega)^{n+1} \big\rangle^{S \times \p^1/ \{ 0 \} }_{g,(m+n,\beta_h)}.
\]
Applying the localization formula and Theorem~\ref{mainthm_1} yields
\[
\sum_{g, h} \big\langle \mu_m , \nu_m, \rho_m \big\rangle^{S \times \p^1/ \{ 0,1,\infty \}}_{g,(m+n,\beta_h)} q^{h-1} u^{2(m+n) + 2g-2}
=\frac{1}{n!^2 m!} \frac{(\mathbf{G}-1)^{m} \Theta^{2n}}{\Theta^2 \Delta}\,.
\]

To obtain the disconnected invariants, let
\[ f : C \to S \times \p^1 \]
be a possibly disconnected relative stable map
incident to (cycles representing) $\mu_m, \nu_m, \rho_m$ over $0,1, \infty$ respectively.
There is a single connected component $C_0$ of $C$ such that
the restriction $f|C_0$ maps in class $(n+k,\beta_h)$ for some $k \geq 0$ 
and is incident to $\mu_k, \nu_k, \rho_k$.
By the incidence conditions, the restriction of $f$ to every other component is
an isomorphism onto a rational line $\p^1 \times P$ where $P$
is one of the remaining incidence points.
In total, with careful consideration of the orderings, we therefore find
\begin{multline*}
\big\langle \mu_m , \nu_m, \rho_m \big\rangle^{S \times \p^1/\{ 0,1,\infty\}, \bullet}_{g,(m+n,\beta_h)}\\
= \frac{1}{(n! m!)^2} \sum_{k=0}^{m} \binom{m}{k} \binom{m}{k} (m-k)!
\Big( (n! k!)^2 \big\langle \mu_k, \nu_k, \rho_k \big\rangle^{S \times \p^1/ \{ 0,1,\infty \}}_{g+(m-k),(k+n,\beta_h)} \Big) \,.
\end{multline*}
The first part of Theorem~\ref{mainthm_1b} follows now by summing up.

In case $n=0$ we will use the relative condition
$\mu_m = \{ (1, x_1) , \ldots , (1,x_m) \}$ for some generic points $x_1, \ldots, x_m \in S$.
Consider an irreducibe curve $\Sigma \subset S \times \p^1$ of degree $k$ over $\p^1$
which is incident to $k$ of the points $\{ x_i \}$ over $0$.
Since $\Hilb^k(S)$ is not uniruled for every $k>0$ the map $\p^1 \to \Hilb^k(S)$ corresponding to $\Sigma$
must be constant and hence $k=1$ and $\Sigma = x_{i} \times \p^1$.
Let $f : C \to S \times \p^1$ be a possibly disconnected relative stable map
incident to $\mu_m, \nu_m, \rho_m$ over $0, 1, \infty$ respectively.
By the previous discussion the image $f(C)$ must contain the curves $x_i \times \p^1$ for all $i$
and hence meets the divisor $S_{\infty}$ in the points $x_1, \ldots, x_m$.
But if $\rho_m$ is represented by the cycle $\{ (1,F_{0}), (1,S)^{m-1} \}$ for some fiber $F_0$ disjoint from $\{ x_i \}$
this implies that $f$ is not incident to $\rho_m$ in contradiction to the assumption.
Hence the moduli space is empty and the invariant vanishes.
\qed

\subsection{Special cases in degree $2$} \label{Section_Special_cases_in_degree_2}
We will require a total of five special cases
of relative invariants of $S \times \p^1$ in degree $2$ over $\p^1$.
The first two cases are provided by Theorem~\ref{mainthm_1b} with $(m,n) = (1,1)$ and $(0,2)$.

\begin{lemma} \label{L0}
$\displaystyle
\big\langle (1,F)^2, D(F) , (1,\pt)(1, \e) \big\rangle^{S \times \p^1, \bullet}
 =
\frac{1}{2} \frac{\Theta(u,q) \cdot D \Theta(u,q)}{\Delta(q)}
$
\end{lemma}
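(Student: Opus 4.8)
The plan is to reduce the left-hand side to the absolute K3 descendent series $\langle \hodge\,\frac{F}{1-\psi_1}\frac{F}{1-\psi_2}\tau_0(\pt)\rangle^{S}$, whose value $u^{-4}\,\Theta(u,q)\,D\Theta(u,q)/\Delta(q)$ (with $D=q\frac{d}{dq}$) was already extracted in the course of proving Lemma~\ref{wer}; the prefactor $\tfrac12$ will be exactly the automorphism factor $1/|\Aut((1,F)^2)|$ built into the definition of the relative invariant.

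First I would show that only connected maps contribute. By deformation invariance take $\beta_h$ irreducible, represent the class $\pt$ in $(1,\pt)(1,\e)$ over $1$ by a \emph{generic} point $P\in S$, and the two copies of $F$ in $(1,F)^2$ over $0$ by cycles on distinct smooth fibers. A disconnected stable map of class $(\beta_h,2)$ and degree $2$ over $\p^1$ consists of one component carrying the entire class $\beta_h$ together with vertical lines $\{p\}\times\p^1$ of class $(0,1)$. The marked point forced onto $(P,1)$ by the point condition lies on one of these: if it lies on the K3 component, that component is degree $1$ over $\p^1$, hence the graph of a rational curve in class $\beta_h$ through the generic point $P$, of which there are none (as in Proposition~\ref{vanishing1}); if it lies on a vertical line $\{P\}\times\p^1$, then the point of that line over $0$ is pinned to one of the fiber cycles, forcing $P\in F$, again impossible for generic $P$. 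Here it is crucial that both parts of $\mu=(1,F)^2$ carry $F$ and none carries $\e$. Thus the disconnected loci are empty and the invariant equals its connected part.

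For the connected invariant I would proceed as in the proof of Theorem~\ref{mainthm_1b}: convert the relative conditions over $1$ and $\infty$ into the absolute descendent insertions $\tau_0(\pt\boxtimes\omega)$ and $\tau_0(F\boxtimes\omega)$ (the trivial factors $(1,\e)$ imposing no condition), while keeping $(1,F)^2$ relative over $0$, and then apply the virtual localization formula \cite{GP} for the $\BC^{\ast}$-action on $\p^1$ with the point representing $\omega$ placed at $\infty$. The relative divisor over $0$ pins the K3 body over $0$; unlike the fully absolute integral of Lemma~\ref{wer}, which receives two fixed-locus contributions (one with the point insertion pulled onto the body as $\frac{\pt}{1-\psi}$, one with it sitting on a rational tail as $\tau_0(\pt)$), only the second survives here. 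The two relative points over $0$ become the descendents $\frac{F}{1-\psi_1}$ and $\frac{F}{1-\psi_2}$ and the point insertion contributes $\tau_0(\pt)$, identifying the connected series with $\tfrac12\,u^4\langle\hodge\,\frac{F}{1-\psi_1}\frac{F}{1-\psi_2}\tau_0(\pt)\rangle^{S}$. Substituting the evaluation from the proof of Lemma~\ref{wer} then yields $\tfrac12\,\Theta(u,q)\,D\Theta(u,q)/\Delta(q)$, as claimed.

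The main obstacle is this localization bookkeeping: I must verify that the relative marking over $0$ genuinely eliminates the $\frac{\pt}{1-\psi}$ fixed locus and isolates the $\tau_0(\pt)$ term, and that the weights of the connecting $\p^1$-covers together with the relative-to-absolute conversion factors collapse—against the ordering of the two relative points—to leave precisely the single factor $\tfrac12=1/|\Aut((1,F)^2)|$ and no residual $u$-dependence beyond $u^4$. As a cross-check one can inspect the leading coefficient directly: since $\Theta\,D\Theta=u^4 q+\cdots$, the series $\tfrac12\,\Theta\,D\Theta/\Delta=\tfrac12 u^4+\cdots$ begins at $g=1,\ h=1$ with value $\tfrac12$, which can be confirmed against a low-genus curve count.
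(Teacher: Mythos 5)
Your proposal is correct and follows essentially the same route as the paper: degenerate so that the conditions over $1$ and $\infty$ become the absolute insertions $\tau_0(\pt\boxtimes\omega)\,\tau_0(F\boxtimes\omega)$ while keeping $(1,F)^2$ relative over $0$, localize to obtain $\tfrac{u^4}{2}\big\langle \BE^{\vee}(1)\,\tfrac{F}{1-\psi_1}\,\tfrac{F}{1-\psi_2}\,\tau_0(\pt)\big\rangle^{S}$, and evaluate this Hodge integral exactly as in the proof of Lemma~\ref{wer} (the paper re-runs the bubbling off to $\p^1\times E$ at this point rather than citing that evaluation, but it is the identical computation, and your connectedness argument is if anything more detailed than the paper's one-line assertion). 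One minor slip of terminology: with the relative divisor at $0$, the surviving fixed locus has the K3 body over $\infty$ (the relative points are transported to it along the vertical edges, producing the $\tfrac{F}{1-\psi_i}$ factors), not ``pinned over $0$'' as you write --- the formula you extract is nevertheless the correct one.
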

\begin{proof}
Only maps from connected curves contribute to the invariants here, hence it is enough to consider connected invariants.
By a degeneration argument we have
\[
\big\langle (1,F)^2, D(F), (1,\pt)(1, \e) \big\rangle^{S \times \p^1}
=
\big\langle (1,F)^2 \big| \tau_0(\pt \boxtimes \omega) \tau_0(F \boxtimes \omega) \big\rangle^{S \times \p^1},
\]
which by the localization formula and the divisor axiom is
\[ \frac{u^4}{2} \Big\langle \, \BE^{\vee}(1) \, \tau_0(\pt) \, \frac{F}{1-\psi_2} \, \frac{F}{1-\psi_3} \, \Big\rangle^S \]
Degeneration of $S$ to the normal cone of an elliptic fiber $E$ yields
\[ \frac{u^4}{2} \big\langle \BE^{\vee}(1) \big| 1 \big\rangle^S
\Big( \blangle \, \omega \, \big| \frac{F}{1-\psi} \, \big| \, 1 \, \brangle^{\p^1 \times E} \Big)^2
\blangle \omega \big| \tau_0(\pt) \BE^{\vee}(1) \big| 1 \brangle^{\p^1 \times E}
\]
Using the Katz-Klemm-Vafa formula \eqref{KKV}, \eqref{assd} and Lemma \ref{P1xE_Lemma} for the first, second
and third term respectively, the claim follows.
\end{proof}

\begin{lemma} \label{L1}
$\displaystyle
\big\langle (2,\pt), D(F),  D(F) \, \big\rangle^{S \times \p^1, \bullet}
 =
\frac{1}{4} \frac{ \frac{\partial}{\partial u} \mathbf{G}(u,q) }{\Delta(q)}
$
\end{lemma}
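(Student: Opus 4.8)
The plan is to follow the proof of Lemma~\ref{L0} as closely as possible, with the relative profile $(2,\pt)$ over $0$ now playing the role that $(1,F)^2$ played there, and with the two conditions $D(F)$ over $1$ and $\infty$ serving as rigidifiers. The one genuinely new feature is that the relative point over $0$ has contact order $2$ rather than $1$, and this is exactly the feature that will produce the $u$-derivative appearing in the answer. I would first reduce to connected maps: a relative stable map with ramification profile $(2,\pt)$ over $S_0$ meets $S_0$ in a single point with contact order $2$, whereas a disconnected curve splits into two components of degree $1$ over $\p^1$, each meeting $S_0$ transversally and thus forcing a profile $(1,\cdot)(1,\cdot)$. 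Hence, by the same reasoning as in Lemma~\ref{L0} and Proposition~\ref{vanishing1}, only connected maps contribute and it suffices to treat connected invariants.

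Next I would degenerate the base $\p^1$ to trade the two copies of $D(F)$ over $1,\infty$ for absolute descendents $\tau_0(F\boxtimes\omega)$, keeping $(2,\pt)$ relative over $0$, expressing the series through $\blangle (2,\pt)\,\big|\,\tau_0(F\boxtimes\omega)^2\brangle^{S\times\p^1/\{0\}}$. Applying the $\BC^{\ast}$-localization on the $\p^1$-factor together with the divisor axiom then reduces everything to a linear Hodge integral on $S$: each $\tau_0(F\boxtimes\omega)$ restricts to $\tau_0(F)$ on the K3 and contributes the factor $F\cdot\beta_h=1$, while the relative profile over $0$ converts to a single descendent insertion on $S$. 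The key difference from Lemma~\ref{L0} is that, since the relative point has contact order $2$, the localization tube over $0$ is a double cover of $\p^1$ and its cotangent weight is rescaled by the ramification index; accordingly $(2,\pt)$ becomes a descendent $\tfrac{\pt}{1-2\psi_1}$ (up to an explicit constant), in contrast to the single covers producing $\tfrac{F}{1-\psi}$ there. Degenerating $S$ to the normal cone of an elliptic fiber and invoking \eqref{KKV}, \eqref{assd} and Lemma~\ref{P1xE_Lemma} as before leaves me with the factor $\blangle \omega\,\big|\,\BE^{\vee}(1)\tfrac{\pt}{1-2\psi}\,\big|\,1\brangle^{\p^1\times E}$, whose contact-order-$2$ nature is what must be evaluated.

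To identify this factor cleanly and to see the $u$-derivative transparently, I would package the contact-order-$2$ contribution through the rubber relation of Proposition~\ref{Proposition_take_out_div}. The relevant known input is that $\nu_{1,1}=D(F)$, so that Theorem~\ref{mainthm_1b} in the case $(m,n)=(1,1)$ reads $\blangle (1,\pt)(1,F),\,D(F),\,D(F)\brangle^{S\times\p^1,\bullet}=\tfrac{\mathbf{G}}{\Delta}$. Differentiating this series in $u$ is, by the rubber calculus behind Proposition~\ref{Proposition_take_out_div}, the operation that merges the size-$1$ parts $(1,\pt)$ and $(1,F)$ into a single part $(2,\pt)$, the divisor $F$ being absorbed through $\langle\beta_h,F\rangle=1$. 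This yields $\blangle (2,\pt),\,D(F),\,D(F)\brangle^{S\times\p^1,\bullet}= c\,\partial_u\!\big(\mathbf{G}/\Delta\big)=c\,\tfrac{\partial_u\mathbf{G}}{\Delta}$ for a rational constant $c$, using that $\Delta$ is independent of $u$; equivalently, this is the statement that the $\p^1\times E$ factor above is the $u$-derivative of $\tfrac{\mathbf{G}-1}{u^2}=\blangle \omega\,|\,\BE^{\vee}(1)\tfrac{\pt}{1-\psi}\,|\,1\brangle^{\p^1\times E}$ from the proof of Theorem~\ref{mainthm_1}.

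The main obstacle is precisely to pin down the constant $c$ and confirm $c=\tfrac14$. The $(2,\pt)$-merge is not literally the $(2,\e)$-merge of Proposition~\ref{Proposition_take_out_div} (which carries constant $1$): the weight $\pt$ sits on the merged part, and one must carefully track the automorphism factor of the double-cover tube, the cotangent-weight rescaling $\psi\mapsto 2\psi$, and the $u$-power conventions of \eqref{rel_gen_ser}, all of which feed into $c$. I expect this bookkeeping—rather than any of the standard degeneration or localization steps—to be the delicate part, and I would fix the constant unambiguously by a direct low-genus computation, evaluating the leading coefficient (in genus $1$ and $h=1$, i.e. the $u^{3}q^{0}$ term) by hand and matching it against $\tfrac14\big[\partial_u\mathbf{G}/\Delta\big]$; once $c=\tfrac14$ is confirmed the identity follows for all $(g,h)$.
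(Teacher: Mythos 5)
Your route is genuinely different from the paper's, and it contains a gap that the final low-genus check cannot repair. The paper's proof is purely formal and very short: it chooses an auxiliary class $\alpha \in H^2(S,\BQ)$ with $\langle \alpha,\alpha\rangle = 1$ and $\alpha$ orthogonal to $F$ and to the section class, applies the degeneration (WDVV-type) relations of Proposition~\ref{Proposition_WDVV_analog} twice, with $(\lambda_1,\dots,\lambda_4)$ equal to $\big((2,\alpha),D(F),D(F),D(\alpha)\big)$ and $\big((1,F)(1,\alpha),D(F),(2,\e),D(\alpha)\big)$, and then closes the resulting linear system using the divisor relations of Proposition~\ref{Proposition_take_out_div} and the evaluations of Theorem~\ref{mainthm_1b}. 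No localization and no new Hodge integrals enter at all.

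The central gap in your proposal is the ``merging'' interpretation of the $u$-derivative. The second relation of Proposition~\ref{Proposition_take_out_div} says $\frac{d}{du}\langle \mu,\nu,D(\gamma)\rangle_{\beta} = \langle\beta,\gamma\rangle\,\langle\mu,\nu,(2,\e)(1,\e)^{d-2}\rangle_{\beta}$: the derivative replaces the divisor $D(\gamma)$ \emph{in its own slot} by the unit-weighted diagonal class $(2,\e)$; it does not fuse the parts $(1,\pt)$ and $(1,F)$ sitting in a different slot into a point-weighted part $(2,\pt)$. Applied to Theorem~\ref{mainthm_1b} with $(m,n)=(1,1)$ it yields $\langle(1,\pt)(1,F),D(F),(2,\e)\rangle = \partial_u \mathbf{G}/\Delta$, which is a different invariant from $\langle(2,\pt),D(F),D(F)\rangle$; relating the two is exactly what the WDVV relations with the auxiliary class $\alpha$ accomplish, and that step is absent from your argument. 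Since you never establish that $\langle(2,\pt),D(F),D(F)\rangle$ is proportional to $\partial_u\mathbf{G}/\Delta$ with a constant independent of $g$ and $h$, matching one coefficient in genus $1$ proves nothing. Your localization step is also not correct as stated: localizing $\langle(2,\pt)\,|\,\tau_0(F\boxtimes\omega)^2\rangle$ produces, besides the double-cover term $u^3\langle\BE^{\vee}(1)\tfrac{\pt}{1-2\psi_1}\rangle^S$, additional rubber contributions proportional to $u^3\langle\BE^{\vee}(1)\tfrac{\pt}{1-\psi_1}\rangle^S$ and $u^5\langle\BE^{\vee}(1)\tfrac{\pt}{1-\psi_1}\tfrac{\pt}{1-\psi_2}\rangle^S$, coming from fixed loci where the curve breaks over $0$. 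Moreover the integral with $\tfrac{\pt}{1-2\psi_1}$ is not covered by Theorem~\ref{mainthm_1} (which only treats $\psi$-weight one), and it is \emph{not} the $u$-derivative of $(\mathbf{G}-1)/u^2$: its correct value contains an extra term proportional to $(\mathbf{G}^2-1)/\Theta^2$, which is precisely what cancels the rubber terms you omitted. Within the paper's toolkit that Hodge integral is determined only \emph{from} Lemma~\ref{L1}, so your reduction, even when corrected, is circular unless you supply an independent evaluation.
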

\begin{proof}
Let $\alpha \in H^2(S, \BQ)$ be a class satisfying 
\[
\langle \alpha, \alpha \rangle = 1
\quad \text{and} \quad
\langle \alpha, F \rangle = \langle \alpha, W \rangle = 0 \,.
\]
Then apply Proposition \ref{Proposition_take_out_div}
twice with $(\lambda_1, \ldots , \lambda_4)$ equal to
\[
\big( (2, \alpha), D(F), D(F), D(\alpha) \big)
\ \text{and} \ 
\big( (1,F)(1, \alpha), D(F), (2,1), D(\alpha) \big)
\]
respectively, and use Proposition \ref{Proposition_take_out_div} and Theorem \ref{mainthm_1b}.
\end{proof}
%
%

For the last case we will require the following Hodge integrals.
\begin{lemma} \label{Hodge_Evaluations}
\begin{align}
\Big\langle \hodge \frac{\e}{1-\psi_1} \tau_0(\pt) \Big\rangle^S
& = \frac{1}{u^2} \frac{ \mathbf{G}(u,q) - 1}{\Theta^2 \Delta} \tag{i} \\
\Big\langle \hodge \frac{\e}{1-\psi_1} \Big\rangle^S
& = \frac{-2}{u^2 \Delta(q)} \tag{ii}
\end{align}
\vspace{-17pt}
\begin{multline}
\tag{iii}
\ \ \ \ \Blangle \BE^{\vee}(1) \frac{\pt}{1-\psi_1} \frac{\e}{1-\psi_2} \tau_0(\pt) \Brangle^{S} \\
=
\Big\langle \hodge \frac{\pt}{1 - \psi_1} \cdot \frac{\e}{1-\psi_2} \Big\rangle^{S} \cdot \frac{D \Theta}{\Theta}
+
\frac{( \mathbf{G}-1 )^2}{u^4 \Theta^2 \Delta}
+ 2 \frac{(\mathbf{G}-1)}{u^4 \Delta} \cdot \frac{D \Theta}{\Theta} .
\end{multline}

\end{lemma}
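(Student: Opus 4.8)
The three evaluations are Hodge integrals on the K3 surface $S$, and the plan is to reduce each of them to the $\p^1 \times E$ building blocks already assembled in this paper by degenerating $S$ to the normal cone of a smooth elliptic fiber $E$,
\[ S \leadsto S \cup (\p^1 \times E), \]
iterated into a chain of bubbles when several insertions are present. On the $S$-component the reduced class survives and contributes the Katz--Klemm--Vafa factor \eqref{KKV}, while each point- or fiber-insertion is specialized to its own $\p^1 \times E$-bubble and evaluated with \eqref{assd}, Lemma~\ref{P1xE_Lemma}, Lemma~\ref{riogkegfr}, and the block $\blangle \omega\,|\,\hodge\,\frac{\pt}{1-\psi_1}\,|\,1 \brangle^{\p^1 \times E} = \frac{\mathbf{G}(u,q)-1}{u^2}$ extracted in the proof of Theorem~\ref{mainthm_1}. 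The one genuinely new feature is the unit-descendent insertion $\frac{\e}{1-\psi}$, which I would handle with the string and dilaton equations; under the degree restriction of Lemma~\ref{P1xE_Lemma}(a) the resulting series should collapse to a small number of controlled terms.

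For (ii) I would first observe that the series is supported in genus $0$. Writing $\frac{\e}{1-\psi_1} = \sum_{k \geq 0} \tau_k(\e)$, the term $\tau_0(\e)$ dies by the string equation, and a dimension count leaves, in genus $0$, only the dilaton term $\tau_1(\e)$, which contributes the factor $2g-2 = -2$ against the Yau--Zaslow count $[1/\Delta]_{q^{h-1}}$; this already produces $\frac{-2}{u^2 \Delta}$. It then remains to prove that all $g \geq 1$ contributions vanish, which I would establish by the reduced stable-pairs argument used for Proposition~\ref{vanishing1}. For (i), note that the answer $\frac{\mathbf{G}-1}{u^2 \Theta^2 \Delta}$ coincides with Theorem~\ref{mainthm_1} in the case $m=1, n=0$; I would prove it by bubbling off a single $\p^1 \times E$ carrying both the $\tau_0(\pt)$ and the unit-descendent marking, so that the $S$-factor is $\blangle \hodge\,|\,1 \brangle^S = \frac{1}{\Theta^2 \Delta}$ by \eqref{KKV} and the $\p^1 \times E$-factor reduces, via the string/dilaton reduction of $\frac{\e}{1-\psi}$ against the incidence point, to the block $\frac{\mathbf{G}-1}{u^2}$.

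The identity (iii) is the substantial one, and I would prove it by degenerating off one $\p^1 \times E$-bubble carrying the undecorated insertion $\tau_0(\pt)$ and applying the degeneration formula to the remaining markings $\frac{\pt}{1-\psi_1}$ and $\frac{\e}{1-\psi_2}$. The three terms on the right-hand side should then appear as the three ways in which the genus, the K3 curve class, and the cohomology-weighted partition at the node distribute across the degeneration: the leading split, in which the bubble carries only $\tau_0(\pt)$ and the node partition is trivial, leaves the $S$-integral $\blangle \hodge\,\frac{\pt}{1-\psi_1}\,\frac{\e}{1-\psi_2} \brangle^S$ multiplied by the bubble contribution $\blangle \omega\,|\,\hodge\,\tau_0(\pt) \brangle^{\p^1 \times E} = \frac{D\Theta}{\Theta}$ of Lemma~\ref{P1xE_Lemma}(c); the remaining splits, in which the curve class and the $\frac{\pt}{1-\psi_1}$ marking are distributed nontrivially, are evaluated with \eqref{KKV}, \eqref{assd}, Lemma~\ref{riogkegfr}, and the block $\frac{\mathbf{G}-1}{u^2}$, and recombine (after squaring the $\frac{\pt}{1-\psi}$-block) into $\frac{(\mathbf{G}-1)^2}{u^4 \Theta^2 \Delta} + 2\frac{\mathbf{G}-1}{u^4 \Delta}\frac{D\Theta}{\Theta}$, the factor $2$ recording the two symmetric ways of assigning the two point-insertions.

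The hardest part throughout is the correct bookkeeping of the unit-descendent series $\frac{\e}{1-\psi}$ and of the combinatorial glue factors. The string and dilaton equations govern only $\tau_0(\e)$ and $\tau_1(\e)$, and I expect the main work to lie in showing that, in the presence of the reduced class and the degree constraint of Lemma~\ref{P1xE_Lemma}(a), the higher descendents $\tau_{k \geq 2}(\e)$ either vanish or assemble into the logarithmic derivative $\frac{D\Theta}{\Theta}$, and in pinning down the exact coefficients (the $-2$ in (ii), and the factor $2$ together with the node-partition weights in (iii)) so that the degeneration sum closes onto the stated quasi-Jacobi forms.
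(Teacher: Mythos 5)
There is a genuine gap, and it sits exactly where you flagged the ``hardest part'': the unit-descendent series $\frac{\e}{1-\psi}$ cannot be handled the way you propose. First, the degeneration formula does not allow you to place the marking $\frac{\e}{1-\psi}$ on a chosen component: since $\e$ restricts to the identity on both sides of $S \leadsto S \cup (\p^1\times E)$, the formula produces a sum over all distributions of that marked point (this is why the paper's corresponding identity \eqref{rtergerg} has three terms, not one). Your one-term formula for (i) therefore drops the contributions where the $\e$-marking stays on $S$ or lands on a second bubble, and these involve the a priori unknown series $\blangle \hodge \frac{\e}{1-\psi_1} \big| 1 \brangle^S$ and $\blangle \omega \big| \hodge \frac{\e}{1-\psi_1} \brangle^{\p^1\times E}$. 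Second, your claimed evaluation of the bubble factor is false: the string and dilaton equations govern only $\tau_0(\e)$ and $\tau_1(\e)$, and there is no reduction of $\frac{\e}{1-\psi}$ ``against the incidence point'' turning it into the known block $\frac{\mathbf{G}-1}{u^2} = \blangle \omega \big| \hodge \frac{\pt}{1-\psi_1} \big| 1 \brangle^{\p^1\times E}$. The correct value is $\blangle \omega \big| \hodge \frac{\e}{1-\psi_1}\tau_0(\pt) \big| 1 \brangle^{\p^1\times E} = \frac{1}{u^2}\big(\mathbf{G}-1+2\Theta\cdot D\Theta\big)$, equation \eqref{400}, and the paper can only derive it by \emph{using} (i) and (ii) as inputs. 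Your computation of (i) lands on the right answer because these two errors cancel exactly; as a derivation it is circular, since all the unit-descendent blocks it needs become computable only after (i) and (ii) are known.

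The paper breaks this circle by proving (i) and (ii) upstairs on $S\times\p^1$ in degree one, where no unit-descendent bubble blocks are required. For (i), one computes the single connected invariant $\blangle \tau_0(F\boxtimes\omega)\,\tau_0(\pt\boxtimes\omega)\brangle^{S\times\p^1}_{g,(\beta_h,1)}$ by torus localization with two different assignments of the insertions to the fixed fibers: one assignment yields $\blangle \hodge \frac{\pt}{1-\psi_1}\brangle^S + \blangle \hodge \frac{F}{1-\psi_1}\tau_0(\pt)\brangle^S$, the other yields $\blangle \hodge \frac{\e}{1-\psi_1}\tau_0(F)\tau_0(\pt)\brangle^S$; after the divisor equation the unknown term $\blangle \hodge \frac{F}{1-\psi_1}\tau_0(\pt)\brangle^S$ cancels and (i) reduces to the $m=1$, $n=0$ case of Theorem~\ref{mainthm_1}. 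For (ii), one localizes $\blangle \tau_0(F\boxtimes\omega)^3\brangle^{S\times\p^1}_{g,(\beta_h,1)}$ and imports the genus vanishing of Proposition~\ref{vanishing1}; note this localization bridge is also what your own argument for (ii) is missing, since the stable-pairs reasoning of Proposition~\ref{vanishing1} applies to incidence problems on $S\times\p^1$ and not directly to a Hodge integral with identity descendents on $S$ (your genus-$0$ string/dilaton/Yau--Zaslow computation, however, is correct). Finally, your skeleton for (iii) does match the paper's degeneration, but it closes only once \eqref{400} is available --- which is precisely the block your proposal cannot produce.
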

\begin{proof}
\noindent {\bf (i)} Consider the connected invariant 
\begin{equation} \blangle \tau_0(F \boxtimes \omega) \tau_0(\pt \boxtimes \omega) \brangle^{S \times \p^1}_{g, (\beta_h, 1)} \,. \label{402b} \end{equation}
Applying the localization formula to \eqref{402b} with $\tau_0(F \boxtimes \omega)$ specializing to
the fiber over $0$, and $\tau_0(\pt \boxtimes \omega)$ specializing to the fiber over $\infty$,
yields 
$\blangle \hodge \frac{\pt}{1-\psi_1} \brangle^S_{g, \beta_h} + \blangle \hodge \frac{F}{1-\psi_1} \tau_0(\pt) \brangle^{S}_{g, \beta_h}$.
Specializing both insertions to the fiber over $\infty \in \p^1$ yields
$\blangle \hodge \frac{\e}{1-\psi_1} \tau_0(F) \tau_0(\pt) \brangle^{S}_{g, \beta_h}$.
Since the result in both computations is the same, the claim now follows by the divisor
equation and Theorem~\ref{mainthm_1}.

\Extended{
We consider the case where both insertions specialize to $\infty$.
Let $t$ be the weight of the tangent space at $\infty$, and $-t$ at $0 \in \p^1$.
The component of a fixed map that maps non-zero in the K3 direction maps into the fiber over $\infty$.
There are two cases: Either there is no component mapping to $0$,
or there is a contracted genus $1$ component mapping to $0$.
In the second case, the euler class of the normal bundle is
\[ e(N^{\text{vir}}) = \frac{(t - \psi) (-t - \psi^{(0)}_2)}{1} \frac{ t (-t) t (-t) }{t (-t) t^g \BE^{\vee}(t^{-1}) (-t - \lambda^{(0)}_1)} \]
where we used a supscript for all classes which arise from the
factor of the moduli space which parametrize the components over $0$.
Hence, the contribution is
\[ \sum_{\eta} \Blangle \frac{\eta}{t - \psi_1} \frac{-1}{t^2} \tau_0(F) \tau_0(\pt) t^2 t^g \BE^{\vee}(t^{-1}) \Brangle^S
 \cdot \Blangle \frac{\eta^{\vee}}{-t - \psi_2} (-t - \lambda_1) \Brangle^S_{g=1, \beta=0} \,.
\]
Since $[ \Mbar_{1,1}(S,0) ]^{\text{vir}} = c_2(S) \cap [\Mbar_{1,1} \times S]$
we must have $\eta^{\vee} = 1$ and hence
\begin{align*}
\Blangle \frac{\eta^{\vee}}{-t - \psi_2} (-t - \lambda_1) \Brangle^S_{g=1, \beta=0}
& =  \Blangle \frac{1 + \lambda_1/t}{1 + \psi_1/t} \Brangle^S_{g=1, \beta=0} = 0,
\end{align*}
since $\lambda_1 = \psi_1$ on $\Mbar_{1,1}$.
Hence the localization graph with the contracted genus $1$ component does not contribute.
The first case is straightforward.
}

\noindent {\bf (ii)} Applying the localization formula to
$\blangle \tau_0(F \boxtimes \omega)^3 \brangle^{S \times \p^1}_{g, (\beta_h,1)}$
where all insertions specialize to the fiber over $0$ yields
$\blangle \hodge \frac{\e}{1 - \psi_1} \tau_0(F)^3 \brangle^S_{g, \beta_h}$.
The claim now follows from Proposition~\ref{vanishing1}, Theorem~\ref{mainthm_1} and the divisor axiom.

\noindent {\bf (iii)} 
Consider the degeneration
\begin{equation} \label{degeneration_22}
S \leadsto S \cup (\p^1 \times E) \cup (\p^1 \times E) \,.
\end{equation}
We apply the degeneration formula to the invariants
$\blangle \hodge \frac{\e}{1-\psi_1} \tau_0(\pt) \brangle^S$
where we specialize $\tau_0(\pt)$ to the first copy of $\p^1 \times E$.
Using Lemma \ref{P1xE_Lemma} the result is
\begin{equation}
\begin{aligned} \label{rtergerg}
& \Big\langle \hodge \frac{\e}{1-\psi_1} \tau_0(\pt) \Big\rangle^S \\
= \ \ &
\Blangle \hodge \frac{\e}{1-\psi_1} \Big| 1 \Brangle^S
\Blangle \omega \Big| \hodge \tau_0(\pt) \Big| 1 \Brangle^{\p^1 \times E} \\
+ \ &
\Blangle \hodge \Big| 1 \Brangle^S
\Blangle \omega \Big| \hodge \frac{\e}{1 - \psi_1} \tau_0(\pt) \Big| 1 \Brangle^{\p^1 \times E} \\
+ \ & 
\Blangle \hodge \Big| 1 \Brangle^S
\Blangle \omega \Big| \hodge \tau_0(\pt) \Big| 1 \Brangle^{\p^1 \times E}
\Blangle \omega \Big| \hodge \frac{\e}{1-\psi_1} \Brangle^{\p^1 \times E} \,.
\end{aligned}
\end{equation}

By (ii) and using the degeneration $S \leadsto S \cup (\p^1 \times E)$ we have
\begin{multline*}
\frac{-2}{u^2 \Delta(q)}
= \Big\langle \hodge \frac{\e}{1-\psi_1} \Big\rangle^S \\
=
\Blangle \hodge \frac{\e}{1-\psi_1} \Big| 1 \Brangle^S
+
\Blangle \hodge \Big| 1 \Brangle^S
\Blangle \omega \Big| \hodge \frac{\e}{1-\psi_1} \Brangle^{\p^1 \times E}
\end{multline*}
Inserting this into \eqref{rtergerg}, using (i), the Katz-Klemm-Vafa formula \eqref{KKV}, and Lemma \ref{P1xE_Lemma},
we obtain
\begin{equation} \label{400}
\Blangle \omega \Big| \hodge \frac{\e}{1 - \psi_1} \tau_0(\pt) \Big| 1 \Brangle^{\p^1 \times E}
= \frac{1}{u^2} \big( \mathbf{G} - 1 + 2 \Theta \cdot D \Theta \big).
\end{equation}

We apply the degeneration formula for \eqref{degeneration_22}
to $\blangle \BE^{\vee}(1) \frac{\pt}{1-\psi_1} \frac{\e}{1-\psi_2} \tau_0(\pt) \brangle^S$.
We specialize the marked point carrying the $\tau_0(\pt)$ insertion to the first copy of $\p^1 \times E$,
and the marked point with insertion $\pt / (1-\psi_1)$ to $S$.
The result is
\begin{align*}
& \Blangle \BE^{\vee}(1) \frac{\pt}{1-\psi_1} \frac{\e}{1-\psi_2} \tau_0(\pt) \Brangle^{S} \\
=\ \  & 
\Blangle \hodge \frac{\pt}{1-\psi_1} \frac{\e}{1-\psi_2} \Big| 1 \Brangle^S
\Blangle \omega \Big| \hodge \tau_0(\pt) \Big| 1 \Brangle^{\p^1 \times E} \\
+ \  & 
\Blangle \hodge \frac{\pt}{1-\psi_1} \Big| 1 \Brangle^S
\Blangle \omega \Big| \hodge \frac{\e}{1 - \psi_1} \tau_0(\pt) \Big| 1 \Brangle^{\p^1 \times E} \\
+ \ & 
\Blangle \hodge \frac{\pt}{1-\psi_1} \Big| 1 \Brangle^S
\Blangle \omega \Big| \hodge \tau_0(\pt) \Big| 1 \Brangle^{\p^1 \times E}
\Blangle \omega \Big| \hodge \frac{\e}{1-\psi_1} \Brangle^{\p^1 \times E} .
\end{align*}
which by a similar argument as before, and with \eqref{400} and Lemma \ref{P1xE_Lemma} is
\[
\Big\langle \hodge \frac{\pt}{1 - \psi_1} \cdot \frac{\e}{1-\psi_2} \Big\rangle^{S} \cdot \frac{D \Theta}{\Theta}
+
\frac{( \mathbf{G}-1 )^2}{u^4 \Theta^2 \Delta}
+ 2 \frac{(\mathbf{G}-1)}{u^4 \Delta} \cdot \frac{D \Theta}{\Theta} . \qedhere
\]
\end{proof}

\begin{lemma} \label{GDFDG} The series
\begin{multline*}
-4 u^4 \Blangle \hodge \frac{\pt}{1-2 \psi_1} \Brangle^S
+ \frac{1}{2} u^6 \Blangle \hodge \frac{\pt}{1-\psi_1} \frac{\pt}{1 - \psi_2} \Brangle^S \\
+ u^4 \Blangle \hodge \frac{\pt}{1 - \psi_1} \frac{\e}{1 - \psi_2} \Brangle^S
+ u^4 \Blangle \hodge \frac{\pt}{1 - \psi_1} \Brangle^S .
\end{multline*}
is equal to $\big(-2 (\mathbf{G}-1) + \Theta \cdot D \Theta \big)\frac{1}{\Delta}$.
\end{lemma}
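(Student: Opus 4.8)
The plan is to realize the entire four-term combination as the sum of $\BC^{\ast}$-fixed point contributions, via the virtual localization formula \cite{GP}, of a single connected Gromov--Witten invariant $I$ of $S \times \p^1$ in class $(\beta_h,2)$ carrying two point-type insertions over $\p^1$, and then to evaluate $I$ independently by degeneration. Here $\BC^{\ast}$ acts on the $\p^1$-factor with fixed fibers over $0,\infty \in \p^1$, and the point insertions are specialized to these fibers, so that the K3 class $\beta_h$ and the genus are carried by the vertices over $0$ and $\infty$ while the middle of the curve is covered by edges.

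First I would enumerate the fixed loci of the degree-$2$ (over $\p^1$) maps. The geometry produces two edge types: a single totally ramified double cover of a fiber $\{s\}\times\p^1$, and a pair of degree-$1$ edges. A totally ramified double cover contributes node-smoothing cotangent weight $t/2$, hence inserts $\tfrac{\pt}{1-2\psi_1}$ at the vertex over $0$; this accounts for the first term $-4u^4\langle\hodge\,\tfrac{\pt}{1-2\psi_1}\rangle^S$, the coefficient being fixed by the edge and Hodge-bundle factors. The split pair of degree-$1$ edges, carrying the two point conditions on the same vertex, produces $\tfrac12 u^6\langle\hodge\,\tfrac{\pt}{1-\psi_1}\tfrac{\pt}{1-\psi_2}\rangle^S$, with $\tfrac12$ the symmetry factor of the two identical edges. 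The remaining fixed loci, in which one edge meets a trivial-class bubble or one insertion degenerates to a unit class $\e$, organize into the terms $u^4\langle\hodge\,\tfrac{\pt}{1-\psi_1}\tfrac{\e}{1-\psi_2}\rangle^S$ and $u^4\langle\hodge\,\tfrac{\pt}{1-\psi_1}\rangle^S$. After clearing the equivariant parameter $t$, which must cancel since $I$ is non-equivariant, this reproduces exactly the left-hand side.

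Next I would compute $I$ directly. Degenerating the base $\p^1$ and then $S$ to the normal cone of an elliptic fiber, $S\leadsto S\cup(\p^1\times E)\cup\cdots$, reduces $I$ to products of the $\p^1\times E$ evaluations of Lemma \ref{P1xE_Lemma}, the Katz--Klemm--Vafa formula \eqref{KKV}, and the relative evaluation \eqref{assd}. The loci involving the $\e$-insertion are handled by the $\p^1\times E$ identity \eqref{400} together with parts (i)--(iii) of Lemma \ref{Hodge_Evaluations}; in particular the second and fourth terms are read off directly from Theorem \ref{mainthm_1} with $(m,n)=(2,0)$ and $(1,0)$, providing an internal consistency check. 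Assembling these closed forms and simplifying the $\Theta^2$-denominators, using $\Theta=u+O(u^3)$, yields $\bigl(-2(\mathbf{G}-1)+\Theta\cdot D\Theta\bigr)\tfrac1\Delta$.

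The main obstacle is the bookkeeping of the degree-$2$ localization: one must correctly account for the automorphism factor of the ramified double cover, the node-smoothing weights that generate the $1-2\psi_1$ denominator, and, most delicately, the fixed loci with a contracted or trivial-class vertex, which through the unit insertion $\e$ feed the term $u^4\langle\hodge\,\tfrac{\pt}{1-\psi_1}\tfrac{\e}{1-\psi_2}\rangle^S$. Checking that every such locus occurs exactly once and that the equivariant parameter cancels, so that the four stated terms appear with their precise coefficients $-4,\tfrac12,1,1$, is the step demanding the most care; the remaining algebra with the quasi-Jacobi forms $\mathbf{G}$ and $\Theta$ is then routine given Lemma \ref{Hodge_Evaluations} and Theorem \ref{mainthm_1}.
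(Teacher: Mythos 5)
Your two-step skeleton (realize the four-term combination as the localization of an auxiliary degree-$2$ invariant of $S\times\p^1$, then compute that invariant a second, independent way) is exactly the paper's strategy, but both steps fail as you have set them up. First, the auxiliary invariant is misidentified: the paper uses $\blangle \tau_0(\pt\boxtimes\omega)\,\tau_0(F\boxtimes\omega)^3 \brangle^{S\times\p^1}_{g,(\beta_h,2)}$, with a \emph{single} $\pt\boxtimes\omega$ and three $F\boxtimes\omega$ insertions. An invariant ``carrying two point-type insertions'' cannot serve: with two $\pt\boxtimes\omega$ insertions alone the dimension constraint fails, and your mechanism for the $\tfrac12 u^6$ term is internally inconsistent --- if the two point conditions sit on distinct degree-$1$ edges the edges are distinguishable (no $\tfrac12$) and the localization graph is a tree (no genus drop, so the prefactor is $u^4$, not $u^6$), while if both sit on one contracted vertex the contribution vanishes because $\pt\cup\pt=0$. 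In the paper that term arises from a \emph{single} $\pt\boxtimes\omega$ on a contracted vertex joined to the main vertex by two edges: the loop lowers the genus (whence $u^6$) and interchanging the two edges gives the $\tfrac12$. Moreover, even for the correct invariant the fixed loci do \emph{not} reproduce the left-hand side on the nose: the $\e$-term appears as $u^4\blangle\hodge\frac{\pt}{1-\psi_1}\frac{\e}{1-\psi_2}\tau_0(F)^3\brangle^S$, and stripping $\tau_0(F)^3$ by the divisor equation is nontrivial precisely in this term (since $F\cup\e=F\neq 0$), producing a correction $3(\mathbf{G}-1)/\Delta$; this correction is the entire source of the coefficient $-2$ in $-2(\mathbf{G}-1)+\Theta\cdot D\Theta$. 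With your accounting the two halves of the argument could never be made to agree numerically.

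Second, the ``independent evaluation by degeneration'' is not available. For a class of degree $2$ over $\p^1$, degenerating $S\leadsto S\cup(\p^1\times E)$ does not express the invariant as a product of $\p^1\times E$ integrals: the degeneration formula forces relative conditions along the cutting divisor with ramification profiles $(2)$ and $(1,1)$ and a sum over cohomology-weighted gluings, none of which is covered by Lemma~\ref{P1xE_Lemma}, \eqref{KKV} or \eqref{assd} --- those closed product formulas apply only in degree $1$. Nor can Lemma~\ref{Hodge_Evaluations}(iii) ``handle the $\e$-loci'': it is only a linear relation between $\blangle\BE^{\vee}(1)\frac{\pt}{1-\psi_1}\frac{\e}{1-\psi_2}\tau_0(\pt)\brangle^S$ and the unknown $\blangle\hodge\frac{\pt}{1-\psi_1}\frac{\e}{1-\psi_2}\brangle^S$, not an evaluation of either; likewise $\blangle\hodge\frac{\pt}{1-2\psi_1}\brangle^S$ admits no closed evaluation at this stage, which is exactly why the lemma is stated for the combination rather than for individual terms. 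The missing idea is the paper's: compute the \emph{same} invariant by a second localization, distributing two insertions to the fiber over $0$ and two to the fiber over $\infty$. That distribution produces only $u^4\blangle\hodge\frac{\pt}{1-\psi_1}\frac{F}{1-\psi_2}\brangle^S + u^4\blangle\hodge\frac{F}{1-\psi_1}\frac{F}{1-\psi_2}\tau_0(\pt)\brangle^S$, both known: the first from Theorem~\ref{mainthm_1} with $(m,n)=(1,1)$, the second equal to $\Theta\cdot D\Theta/\Delta$ by the degeneration already carried out in the proof of Lemma~\ref{wer}. Equating the two localizations and applying the divisor equation, including its correction term, then yields the claim.
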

\begin{proof}
Consider the connected invariant
\begin{equation} \label{401} \Blangle \tau_0(\pt \boxtimes \omega) \tau_0(F \boxtimes \omega)^3 \Brangle^{S \times \p^1}. \end{equation}

We apply the localization formula to \eqref{401},
with exactly two of the four insertions specializing to the fiber over $0 \in \p^1$.
The result is
\[ u^4 \Blangle \hodge \frac{\pt}{1-\psi_1} \frac{F}{1-\psi_2} \Brangle^S
 + u^4 \Blangle \hodge \frac{F}{1-\psi_1} \frac{F}{1-\psi_2} \tau_0(\pt) \Brangle^S \,,
\]
which, by a degeneration argument, Theorem \ref{mainthm_1} and Lemma \ref{P1xE_Lemma}, is equal to
$\big( (\mathbf{G}-1) + \Theta \cdot D \Theta \big)/\Delta$.

We apply the localization formula a second time to \eqref{401},
this time specializing the insertion $\tau_0(\pt \boxtimes \omega)$ to the fiber over $\infty$,
and all insertions $\tau_0(F \boxtimes \omega)$ to the fiber over $0$.
The result is
\begin{multline*}
-4 u^4 \Blangle \hodge \frac{\pt}{1-2 \psi_1} \Brangle^S
+ u^4 \Blangle \hodge \frac{\pt}{1 - \psi_1} \frac{\e}{1 - \psi_2} \tau_0(F)^3 \Brangle^S \\
+ \frac{1}{2} u^6 \Blangle \hodge \frac{\pt}{1-\psi_1} \frac{\pt}{1 - \psi_2} \Brangle^S 
+ u^4 \Blangle \hodge \frac{\pt}{1 - \psi_1} \Brangle^S .
\end{multline*}
The claim follows now follows by applying the divisor axiom to the second term
and using Theorem~\ref{mainthm_1}.
\end{proof}

\Extended{
We prove the following evaluation:

\noindent \textbf{Lemma.}
\begin{equation} 
\Big\langle \BE^{\vee}(1) \frac{\pt}{1 - 2 \psi_1} \Big\rangle^S
= \frac{1}{8} \frac{ \frac{\partial}{\partial u} \mathbf{G}(u,q) }{ u^3 \Delta(q)}
+ \frac{1}{4} \frac{(\mathbf{G}^2-1)}{u^2 \Theta^2 \Delta} \tag{a}
\end{equation}
\vspace{-16pt}
\begin{multline}
\ \Big\langle \hodge \frac{\pt}{1 - \psi_1} \cdot \frac{\e}{1-\psi_2} \Big\rangle^{S} \\
=
\left( -2 (\mathbf{G}-1) + \Theta \cdot D \Theta + \frac{1}{2} u \frac{d}{du}(\mathbf{G}) + \frac{1}{2} u^2 \frac{\mathbf{G}^2-1}{\Theta^2} \right) \frac{1}{u^4 \Delta(q)}
\tag{b}
\end{multline}
\begin{equation} \tag{c}
\begin{aligned}
\Big\langle \BE^{\vee}(1) \frac{\pt}{1 - 2 \psi_1} \Big\rangle^S
& = \frac{-1}{4 u^2 \cdot (iu)} \frac{\varphi_{2,0}}{\Delta(q)} + \frac{1}{2u^2 \Delta(q)} + \frac{(\mathbf{G}-1)}{2 u^2 \Delta(q)} \\
& = \frac{ \frac{d}{du}\big( \Theta(u,q)^2 \big) }{4 u^3 \Delta(q)} + \frac{\mathbf{G}(u,q)}{2u^2 \Delta(q)}.
\end{aligned}
\end{equation}

\begin{proof}

\noindent {\bf (a)}
By a degeneration argument and since only connected curves contribute, we have
\[
\Big\langle\, (2,\pt), \, D(F),  \, D(F) \, \Big\rangle^{S \times \p^1, \bullet}
=
\Big\langle\, (2,\pt)\, \Big| \, \tau_0(F \boxtimes \omega)^2 \, \Big\rangle^{S \times \p^1} .
\]
The left hand side has been determined in Lemma \ref{L1}.
We apply the localization formula to the right hand side. Three different terms contribute.

Let $t$ be the weight of the tangent space at $\infty$, and $-t$ at $0 \in \p^1$.
We specialize the insertions $\tau_0(F \boxtimes \omega)$ to the point over $\infty$.

\noindent \textbf{Term 1.} The first term arises from the fixlocus
parametrizing maps which consist of a degree $2$ Galois cover of class $(2,0)$
which meets the divisor $0 \times S$ with degree $2$ at a fixed point $P$.
The remaining curve maps to the fiber over $\infty$.

The Euler class of the virtual normal class is
\[ e( N^{\text{vir}} ) = \frac{ \frac{t}{2} - \psi_1 }{1} \cdot \frac{ \frac{t^2}{2} \cdot t }{t \cdot (t^g + \ldots + (-1)^g \lambda_g )} \]
where the terms arise according to the order
\[ e(N^{\text{vir}} ) =
\frac{e(\text{Def}(C)}{e( \text{Aut}(C) )} \cdot
\frac{ \prod_i e( H^0(C_i, f^{\ast} T_{S \times \p^1}) ) }{\prod_x e( H^0(x, f^{\ast}(T_{S \times \p^1})))
 \cdot e(H^1(C, f^{\ast}(T_{S \times \p^1}) ))}
\]
where in each term it is understood to only take the \emph{moving} part of the corresponding space.
Below, we may also write
\[ t^g \BE^{\vee}(t^{-1}) = t^g - \lambda_1 t^{g-1} + \ldots + (-1)^g \lambda_g \,. \]
The insertions restrict to the fixpoint locus as
\[ \tau_0([ \textbf{0} ] \boxtimes F)|_{\Mbar^{\text{fix}}} = \tau_0(F) t \,. \]
In total the contribution of this component is therefore
\begin{equation} \label{ABC}
\frac{1}{2} \left\langle\  \frac{\ev_1^{\ast}(\pt)}{\frac{t}{2} - \psi_1}
\cdot \frac{2}{t^2} \cdot t^g \BE^{\vee}(t^{-1}) \cdot \tau_0(F)^2 t^2 \ \right\rangle^S_g \,.
\end{equation}
Here the factor $\frac{1}{2}$ arises since the degree $2$ Galois cover
has a $\BZ_2$ automorphism which we have to factor out when restricting to the fix locus (which has no automorphism);
compare also \cite[page 506]{GP} where we take into account of the automorphisms
by dividing out by $|\Aut(\Gamma)|$.

A second observation is a degree check. The total degree of the insertion in \eqref{ABC} is
\begin{multline*}
\deg{\tau_0(\pt)} - \deg(t - \psi_1) - \deg(t^2) + \deg(t^g) - \deg(\BE^{\vee}(t^{-1})) + 2 \deg(\tau_0(F)) \\ + \deg(t^2) \ =\  
2 - 1 - 2 + g - 0 + 2 + 2 \ =\  g + 3
\end{multline*}
which equals the virtual dimension of $\Mbar_{g,3}(S, \beta)$ as expected.
Taking the limit $t \mapsto 0$ and applying twice the divisor equation for $\tau_0(F)$ the term \eqref{ABC} becomes
$2 \langle \BE^{\vee}(1) \frac{\pt}{1 - 2 \psi_1} \rangle^S_g$.

\noindent \textbf{Term 2.}
The second component we consider parametrizes
maps which split into three different parts:
\begin{itemize}
 \item[(i)] A map to the rubber geometry $R \times S$ relative to $0, \infty$, where $R = \p^1 / \BC^{\ast}$ denotes the rubber.
The ramification condition over $0$ is $(2, \pt)$, and over $\infty$ it is $(1,1)^2$.
The class is $(2,0)$. There is a single map in the moduli space,
consisting of a degree $2$ cover of $\p^1$ with ramification points $0$ and a point $\p^1 \setminus \{ 0, \infty \}$
which is fixed by the $\BC^{\ast}$-action of the rubber.
The map admits a $\BZ_2$ automorphism which arises by interchanges the two branches over $\p^1$.
\item[(ii)] Two connected components, each mapping with degree $1$ to
a fixed rational line $\p^1 \times \pt$ inside $S \times \p^1$,
and both glued to the curve in (i) along the ralative points over $\infty$.
These are the only components which maps into the 'interior' of $S \times \p^1$.
\item[(iii)] A component mapping entirely into the K3 surface $S$ with class $\beta_h$,
glued to one of the components in (ii).
\end{itemize}
The Euler class of the virtual normal bundle is
\[ e(N^{\text{vir}}) = \frac{ (-t - \Psi_{\infty}) (t- \psi_1) }{t} \cdot \frac{t^2 \cdot t}{t \cdot t^g \BE^{\vee}(t^{-1})} \,. \]
Here the class $\Psi_{\infty}$ describes the first Chern class of the
relative divisor at $\infty$ of the rubber bubble, see \cite{GV, MP}.
Note, that although we have two relative point and so have two points along which we glue the curve,
the class $(-t-\Psi_{\infty})$ only appears once; this is related to the
predeformability conditions of relative stable maps: the only way one of the node
is smoothed is by smoothing the nodal divisor of $(R \times S) \cup (S \times \p^1)$,
in which case both curves get smoothed.

The total contribution is therefore
\[
2 \cdot \Big\langle (2, \pt) \Big| \frac{1}{-t - \Psi_{\infty}} \Big| (1,e)^2 \Big\rangle^{R \times S}_{g=0}
\cdot
\Big\langle \frac{\ev_1^{\ast}(\pt)}{t - \psi_1} \frac{t^g \BE^{\vee}(t^{-1})}{t} \tau_0(F)^2 t^2 \Big\rangle^S_g \,.
\]
Here the factor $2$ arises by considering automorphisms as follows:
Recall first that we follow the convention of unordered relative points,
hence the relative insertion $(1,\e)^2$ over $\infty \in R$
is unordered.
Similarly, the attachment points over $0 \in \p^1$ are considered to be unordered;
and it does not matter to which rational line we attach part (iii) of the curve.
In the glueing over $\infty \in R$ both sides are unordered;
hence we have to multiply by a factor of $\Fz( (1,\e)^2 ) = 2$
to compensate for dividing out the symmetries twice.
(Breaking the curve into two pieces yields a set of unordered \emph{pairs}
of points on each side; hence after taking the pairs apart we have either two sets of unordered points times the order
of the symmetry group, or two sets of ordered points divided out by the symmetry group.)
As we will see in a second the rubber integral contributes $1/2$ leaving us the whole factor $1$.
(If we would have ordered the relative points, the
rubber integral would have no automorphism, hence contributing $1$.
For the relative conditions the $i$-marked points over $\infty \in R$ is canonically identified with
the $i$-marked point over $0 \in \p^1$.
And there are two choices for attaching the map to the K3 surface to one of the two rational lines.
Finally, we need to componsate for picking an ordering at the glueing points;
hence we need to divide by the automorphisms of the relative conditions which is $2$.
In total we obtain multiplicity $1$.)

Finally, there are two choices for attaching the map to the K3 surface. In total therefore again multiplicity $1$.)

The contribution of the rubber term ($1/2$) can be seen
directly since the moduli space is a point with automorphism $2$.
Alternatively, we may use the evaluation (see \cite[Theorem 2]{OP1})
\[ \blangle \mu \big| \tau_{l(\mu)+l(\nu)-2}(\omega) \big| \nu \rangle^{\p^1}_{g=0} = \frac{1}{ |\Aut(\mu)| \cdot |\Aut(\nu)|} \]
where the degree is specified by $\mu$ and $\nu$. In our case $\mu = (2)$ and 
$\nu = (1,1)$ which gives $1/2$.

We find Term 2 yields $-\langle \BE^{\vee}(1) \frac{\pt}{1 - \psi_1} \rangle^S_g$.

\vspace{5pt}
\noindent \textbf{Term 3.}
As in Term $2$ above, the curve has three components. Components (1) and (2) are as above, while
(3) is glued to both components of (2). The dual graph of the curve has therefore a loop,
and the map to the K3 surface is of genus $g-1$.
Also, we obtain an additional automorphism for the part of the curve mapping to the interior of $S \times \p^1$,
since we may now the two components in (ii).

The Euler class of the normal bundle is
\[ e(N^{\text{vir}}) = \frac{(-t-\Psi_{\infty}) (t-\psi_1) (t-\psi_2)}{1} \cdot \frac{t^2 \cdot t}{t^2 \cdot t^g \BE^{\vee}(t^{-1}) } \]
which yields the contribution
$- \frac{1}{2} \langle \frac{\pt}{1-\psi_1} \cdot \frac{\pt}{1-\psi_2} \cdot \BE^{\vee}(1) \rangle^S_{g-1}$.

The result is
\begin{multline*}
 2 u^3 \Big\langle \BE^{\vee}(1) \frac{\pt}{1 - 2 \psi_1} \Big\rangle^S - u^3 \Big\langle \BE^{\vee}(1) \frac{\pt}{1 - \psi_1} \Big\rangle^S \\
- \frac{1}{2} u^5 \Big\langle \BE^{\vee}(1) \frac{\pt}{1 - \psi_1} \frac{\pt}{1 - \psi_2} \Big\rangle^S \,.
\end{multline*}
The claim now follows from Theorem~\ref{mainthm_1}.

\noindent \textbf{(b)} This follows from Lemma \ref{GDFDG} after solving using (a).

\noindent \textbf{(c)}
By degeneration and the GW/Hilb correspondence
\begin{align*}
\blangle (2,F) \big| \tau_0(\omega F)^3 \brangle^{S \times \p^1}
& = 2 \blangle (2,F) \big| (1,F)^2 \brangle^{S \times \p^1, \sim} \\
& = \frac{1}{2 I} \blangle \Fp_{-2}(F) \big| \Fp_{-1}(F)^2 \brangle^{\Hilb} \\
& = - \frac{1}{2 i} \frac{1}{\Delta(q)} \varphi_{2,0}
\end{align*}
By localization
\begin{align*}
& \blangle (2,F) \big| \tau_0(\omega F)^3 \brangle^{S \times \p^1} \\
& = 2 \Blangle \frac{F}{1-2 \psi_1} \hodge \Brangle^{S} u^3 - \Blangle \frac{F}{1-\psi_1} \hodge \Brangle^S u^3
- \Blangle \frac{F}{1-\psi_1} \frac{\pt}{1-\psi_2} \hodge \Brangle^{S} u^5 \\
& = 2 \Blangle \frac{F}{1-2 \psi_1} \hodge \Brangle^{S} u^3
- \frac{u^1}{\Delta(q)} - \frac{u (\mathbf{G}-1)}{\Delta(q)}.
\end{align*}
The claim now follows from
\[ \varphi_{2,0} = 2 \Theta^2 \left( \frac{1}{i} \frac{d}{du} \log(\Theta) \right) . \]
\end{proof}
}

We determine the fifth special case.
\begin{lemma} \label{L2}
$\displaystyle
\big\langle D(\pt), D(F), D(\pt) \big\rangle^{S \times \p^1, \bullet}
 =
 \frac{ \left( D \Theta(u,q) \right)^2 }{\Delta(q)}
$
\end{lemma}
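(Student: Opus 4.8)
The plan is to evaluate the disconnected relative invariant by trading the relative conditions for interior insertions via a base degeneration, and then computing the resulting invariant by $\BC^\ast$-localization along the $\p^1$-factor, exactly in the style of the proofs of Lemma~\ref{L0} and Lemma~\ref{GDFDG}. Since a $\BC^\ast$-action on $\p^1$ fixes only $0$ and $\infty$, the relative divisor over $1$ is an obstruction to localization, so first I would degenerate the base to split off the fiber over $1$; this replaces the divisor-type condition $D(F)$ by the interior insertion $\tau_0(F\boxtimes\omega)$ while keeping the two point-conditions relative over the torus-fixed fibers. Thus it suffices to evaluate the generating series (summed over genus and over $\beta_h$ with the usual weights) of
\[
\big\langle\, D(\pt)\ \big|\ \tau_0(F\boxtimes\omega)\ \big|\ D(\pt)\,\big\rangle^{S\times\p^1/\{0,\infty\},\bullet}\,.
\]
Placing one $D(\pt)$ over each of $S_0$ and $S_\infty$ is what makes the expected square transparent: in the localization the two point-conditions sit at the two ends and each will produce a factor of $D\Theta$.

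Next I would apply the virtual localization formula. As in the analysis of the three terms in the extended computation following Lemma~\ref{GDFDG}, the fixed loci for a degree-$2$ cover consist of a genus-carrying component mapping into the K3 fiber over $0$ or $\infty$, rational lines $\{s\}\times\p^1$ covering the axis, and, in the remaining graphs, a ramified degree-$2$ Galois cover together with a rubber contribution supported at the relative divisors; each graph carries its automorphism factor, namely the $\BZ_2$ of the Galois cover and the automorphisms of the relative partitions. Taking the limit of the equivariant parameter $t\to 0$ and applying the divisor axiom, every contribution collapses to a product of a linear Hodge integral on $S$ with $\p^1\times E$ factors. The required ingredients are precisely those already established: the Katz--Klemm--Vafa evaluation $\langle\hodge\rangle^S=1/(\Theta^2\Delta)$ from \eqref{KKV}, the $\p^1\times E$ evaluations \eqref{assd} and $\blangle\,\omega\,|\,\hodge\,\tau_0(\pt)\brangle^{\p^1\times E}=D\Theta/\Theta$ from Lemma~\ref{P1xE_Lemma}(c), and the point--point Hodge integral $\big\langle\hodge\,\frac{\pt}{1-\psi_1}\,\frac{\e}{1-\psi_2}\big\rangle^S$, which is obtained by solving Lemma~\ref{Hodge_Evaluations}(iii) against Lemma~\ref{GDFDG}.

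Assembling these pieces, the two point-insertions each contribute a factor of $D\Theta$ via Lemma~\ref{P1xE_Lemma}(c), while the interior fiber insertion together with the Katz--Klemm--Vafa factor combine to $1/\Delta$, after the $\Theta$-denominators cancel; simplifying with the identities among $\mathbf{G}$, $\Theta$, and $D\Theta$ then yields $(D\Theta)^2/\Delta$. The main obstacle is the localization bookkeeping for the degree-$2$ cover: one must enumerate all fixed graphs, including the ramified-cover and rubber loci, compute each virtual normal bundle Euler class, and track the automorphism and gluing factors correctly, and one must have the full Hodge-integral system (Lemma~\ref{Hodge_Evaluations}(iii) together with Lemma~\ref{GDFDG}) solved before substituting. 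Verifying that the resulting quasi-Jacobi expression collapses exactly to the clean square $(D\Theta)^2/\Delta$, with all lower-genus coefficients vanishing as forced by the $u^{6}$-leading behavior, is the delicate endpoint of the calculation.
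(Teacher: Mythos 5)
Your first step (trading the $D(F)$ condition over $1$ for the interior insertion $\tau_0(F\boxtimes\omega)$ by bubbling off the fiber over $1$) is fine, and your list of ingredients --- \eqref{KKV}, \eqref{assd}, Lemma~\ref{P1xE_Lemma}(c), and Lemma~\ref{Hodge_Evaluations}(iii) combined with Lemma~\ref{GDFDG} --- is exactly what the evaluation ultimately needs. The proposal breaks down, however, at the localization step. In the geometry $S\times\p^1/\{S_0,S_\infty\}$ the two relative divisors sit at the \emph{only} two torus-fixed fibers. Components of a relative stable map are not permitted to map into a relative divisor, so the genus-carrying components in the K3 direction cannot sit inside $S_0$ or $S_\infty$ the way they do in an absolute localization; in relative (Graber--Vakil) localization they are forced into rubber over $0$ or over $\infty$. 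Since every edge of a fixed graph is a cover of a fiber $\{s\}\times\p^1$, hence of class $(0,k)$, the class $\beta_h$ must be carried by the rubber, and the resulting rubber integrals carry both $\beta_h$ and the $D(\pt)$ conditions --- by rubber calculus/rigidification they are equivalent in difficulty to the relative invariant you are trying to compute. So the claim that ``every contribution collapses to a product of a linear Hodge integral on $S$ with $\p^1\times E$ factors'' after $t\to 0$ is false for this setup; the computation is circular, not a collapse.

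This is precisely why the paper's proof converts \emph{all three} relative conditions to interior insertions and localizes the absolute invariant $\langle\tau_0(\pt\boxtimes\omega)^2\tau_0(F\boxtimes\omega)\rangle^{S\times\p^1}$, where both fixed fibers are available for the K3 components and the five fixed loci produce genuine Hodge integrals. The price of that conversion is that trading the point-type conditions $D(\pt)$ is not clean: the degeneration formula produces the correction terms $2\langle(1,\pt)^2\,|\,\tau_0(F\boxtimes\omega)\rangle^{S\times\p^1}$ and $2\langle(1,\pt)(1,F)\,|\,\tau_0(F\boxtimes\omega)\rangle^{S\times\p^1}$, which must be evaluated separately (by localization and Theorem~\ref{mainthm_1}) and subtracted. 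Your proposal has no mechanism producing or accounting for these corrections, which is a second symptom of the same structural problem: either you keep the point conditions relative and face K3-carrying rubber integrals, or you trade them away and must handle the resulting correction terms. Keeping at most \emph{one} fiber relative (as in the paper's extended computation of $\langle(1,\pt)(1,\e)\,|\,\tau_0(F\boxtimes\omega)\tau_0(\pt\boxtimes\omega)\rangle^{S\times\p^1}$) is the most one can do and still have localization reduce to Hodge integrals on $S$.
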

\begin{proof}
Only connected curves contribute to the integral. The degeneration formula yields
\begin{multline*}
\blangle \tau_0(\pt \boxtimes \omega)^2 \tau_0(F \boxtimes \omega) \brangle^{S \times \p^1}
=
\big\langle\, D(\pt), \, D(F), \, D(\pt) \, \big\rangle^{S \times \p^1} \\
+ 2 \blangle (1, \pt)^2 \big| \tau_0(F \boxtimes \omega) \brangle^{S \times \p^1}
+ 2 \blangle (1, \pt)(1, F) \big| \tau_0(F \boxtimes \omega) \brangle^{S \times \p^1} \,.
\end{multline*}
The last two terms of the right hand side are computed directly using
the localization formula and Theorem \ref{mainthm_1}:
\begin{align*}
\blangle (1, \pt)^2 \big| \tau_0(F \boxtimes \omega) \brangle^{S \times \p^1}
& =
\frac{1}{2} \frac{(\mathbf{G}-1)^2}{\Theta^2 \Delta} \\
\blangle (1, \pt)(1, F) \big| \tau_0(\pt \boxtimes \omega) \brangle^{S \times \p^1}
& = 
\frac{(\mathbf{G}-1)}{\Delta} \frac{ D \Theta}{\Theta}
\end{align*}
Hence it remains to prove
\[
\blangle \tau_0(\pt \boxtimes \omega)^2 \tau_0(F \boxtimes \omega) \brangle^{S \times \p^1}
= 
\frac{ ( D \Theta )^2 }{\Delta} + 2 \frac{(\mathbf{G}-1)}{\Delta} \frac{ D \Theta}{\Theta} + \frac{(\mathbf{G}-1)^2}{\Theta^2 \Delta} \,.
\]

We apply the localization formula to the left hand side,
specializing exactly one of the $\tau_0(\pt \boxtimes \omega)$ insertions
to the fiber over $0$, and the other insertions to the fiber over $\infty$.
Five fixed loci contribute. The result is
\begin{equation} \label{GERGE}
\begin{aligned} 
- \ & 4 u^4 \Blangle \hodge \frac{\pt}{1-2 \psi_1} \tau_0(\pt) \Brangle^{S} \\
+ \ & u^4 \Blangle \BE^{\vee}(1) \frac{\pt}{1-\psi_1} \frac{\e}{1-\psi_2} \tau_0(\pt) \tau_0(F) \Brangle^{S} \\
+ \ & \frac{1}{2} u^6 \Blangle \hodge \frac{\pt}{1-\psi_1} \frac{\pt}{1-\psi_2} \tau_0(\pt) \Brangle^{S} \\
+ \ & u^4 \Blangle \hodge \frac{\pt}{1-\psi_1} \tau_0(\pt) \Brangle^S \\
+ \ & u^4 \Blangle \hodge \frac{\pt}{1-\psi_1} \frac{F}{1-\psi_2} \tau_0(\pt) \Brangle^{S} \,.
\end{aligned}
\end{equation}
By the divisor equation and Lemma~\ref{Hodge_Evaluations}(iii),
we may remove the $\tau_0(F)$ and $\tau_0(\pt)$ insertion from the second term.
Since only fiber and point classes appear in the other terms of \eqref{GERGE},
the $\tau_0(\pt)$ insertion can be degenerated off to a copy of $\p^1 \times E$, where it is evaluated by Lemma~\ref{P1xE_Lemma}.
The remaining first four terms then exactly yield the evaluation of Lemma \ref{GDFDG}.
Applying Theorem~\ref{mainthm_1} for the last term, a direct calculation shows the claim.
\end{proof}

\Extended{
The localization calculation in the previous Lemma proceeds as follows.
The five terms are the following:

\noindent \textbf{(1)}. The localization graph is
\[
\begin{tikzpicture}
\vertex (a) at (0,0) [label=left:$\pt$] {};
\node[shape=circle,draw, inner sep=2pt] (b) at (3,0) {$\pt, F$};
\draw (a) edge node[above]{$2$} (b);
\end{tikzpicture}
\]
where the right hand side denotes the fiber over $\infty$ with
tangent space carrying weight $t$, and the left side denotes the fiber over $0$.
The round non-filled circle denotes the vertex which carries
the class $\beta_h$, i.e. which parametrizes non-constant maps to the K3 surface $S$.
If not otherwise denotes, the genus labeling of the round circle vertex is $g$,
and of all other vertices is $0$.
The edge is labeled $2$ since it correspond to a degree $2$ Galois cover
(i.e. a map $\p^1 \to \p^1$ of the form $z \mapsto z^2$);
an unlabeled edge will correspond to a degree $1$ map.

The Euler class of the moving part of the normal bundle is
\[ e(N^{\text{vir}}) = \frac{\left(\frac{t}{2} - \psi_1 \right)}{1} \frac{ t^4/4 \cdot t}{t t^g \BE^{\vee}(t^{-1})} . \]
The automorphism factor arising from the degree $2$ Galois cover is $2$.
Hence, the contribution of this term is
\begin{multline*}
\frac{1}{2} u^4 \Blangle \frac{(-t) \Fp}{\frac{t}{2} - \psi_1} \frac{4}{t} t^{g} \BE^{\vee}(t^{-1}) \tau_0(\pt) \tau_0(F) t^2 \Brangle^S \\
= -4 u^4 \Blangle \frac{\pt}{1-2 \psi_1} \hodge \tau_0(\pt) \tau_0(F) \Brangle^S
\end{multline*}
The factor $u^4$ arises since we weight the genus $g$ invariants on $S \times \p^1$
by $u^{2g+2}$, while on the K3 surface $S$ we weight them with $u^{2g-2}$,
and after localization the genus has not changed.

\noindent \textbf{(2)}.

\[
\begin{tikzpicture}
\vertex (a) at (0,1) [label=left:$\pt$] {};
\node[shape=circle,draw, inner sep=2pt] (b) at (3,1) [label=below:$\tiny{g-1}$] {$\pt, F$};
\draw (a) edge[bend right=20] (b);
\draw (a) edge[bend left=20] (b);
\end{tikzpicture}
\]

Here the left vertex is $\Mbar_{0,3}$ and is therefore stable.
The two incoming edges will meet at distinct marked points.
The graph has a loop, hence the genus on the right vertex is $g-1$.
The Euler class is
\begin{multline*} e(N^{\text{vir}}) \\
= 
\frac{(t-\psi_1) (t - \psi_2) (-t-\psi_1^l)(-t-\psi_2^l)}{1}
 \frac{(t (-t))^2 t (-t)}{t^2 (-t)^2 (t^{g-1} + \ldots + (-1)^{g-1} \lambda_{g-1})},
\end{multline*}
where in the numerator of the second factor the term $(t (-t))^2$ arises from the two edges,
$t$ arises from the tangent space to the right vertex, $-t$ arises from the tangent space over the left vertex;
in the denominator $t^2$ arises from the two marked points
on the right hand side, $(-t)^2$ arises from the two marked points
on the left vertex.
On $\Mbar_{0,3}$ the psi classes vanish, hence $\psi_1^l = \psi_2^l = 0$.

The automorphism factor is $2$ coming from interchanging the two edges.
Hence, the contribution is
\begin{multline*}
\frac{1}{2} u^6 \blangle (-t) \tau_0(\pt) \tau_0(1)^2 \frac{1}{(-t)^2} \brangle^{S}_{g=0, \beta=0} \cdot \\
\Blangle \frac{\pt}{t-\psi_1} \frac{\pt}{t-\psi_2}
\left( \frac{-1}{t^2} \right) (t^g + \ldots + (-1)^g \lambda_g) \tau_0(F) \tau_0(\pt) t^2 \Brangle^S 
\end{multline*}
where the factor $u^6$ corrects for the lowered genus.
Note again that all the degrees of the insertion matches.
The above simplifies to the final contribution:
\[ \frac{1}{2} \Blangle \hodge \frac{\pt}{1-\psi_1} \frac{\pt}{1-\psi_2} \tau_0(F) \tau_0(\pt) \Brangle^S \,. \]

\noindent \textbf{(3)}.

\[
\begin{tikzpicture}
\vertex (a) at (0,0.4) [label=left:$\pt$] {};
\vertex (a2) at (0,-0.4) {};
\node[shape=circle,draw, inner sep=2pt] (b) at (3,0) {$\pt, F$};
\draw (a) edge (b);
\draw (a2) edge (b);
\end{tikzpicture}
\]

Here (although it looks different) both edges represent maps
to horizontal rational curves $s \times \p^1$ for some $s \in S$.
The Euler class is
\[ e(N^{\text{vir}}) = \frac{(t-\psi_1) (t-\psi_2)}{(-t)} \frac{(t (-t))^2 t}{t^2 (t^g + \ldots + (-1)^g \lambda_g)} . \]
The automorphism factor is $1$ (since one of the edges carries a marked point,
and the other doesnt).
Hence, we find
\begin{multline*}
u^4 \Blangle \frac{(-t) \pt}{t-\psi_1} \frac{\e}{t-\psi_2} \left( \frac{-1}{t^2} \right) (t^g + \ldots + (-1)^g \lambda_g)
\tau_0(F) \tau_0(\pt) t^2 \Brangle^S \\
= u^4 \Blangle \hodge \frac{\pt}{1-\psi_1} \frac{\e}{1-\psi_2} \tau_0(F) \tau_0(\pt) \Brangle^S.
\end{multline*}

\noindent \textbf{(4)}.

\[
\begin{tikzpicture}
\vertex (a) at (0,0) [label=left:$\pt$] {};
\node[shape=circle,draw, inner sep=2pt] (b) at (3,0.4) {$\pt, F$};
\vertex (b2) at (3,-0.4) {};
\draw (a) edge (b);
\draw (a) edge (b2);
\end{tikzpicture}
\]

Note again that the left vertex is a stable $\Mbar_{0,3}(S, 0) = \Mbar_{0,3} \times S$
and we have to include the smooth terms $(-t - \psi_i^l)$.
We use $\psi_i^l = 0$ from the start.
The Euler class then is
\[ e(N^{\text{vir}}) = \frac{(t-\psi_1) (t-\psi)(-t)^2}{t} \frac{(t(-t))^2 t (-t)}{t (-t)^2 (t^g + \ldots + (-1)^g \lambda_g)} \]
and the contribution is
\[ u^4 \Blangle \hodge \frac{\pt}{1-\psi_1} \tau_0(\pt) \Brangle^S . \]

\noindent \textbf{(5)}.

\[
\begin{tikzpicture}
\node[shape=circle,draw, inner sep=3pt] (a) at (0,0) {$\pt$};
\vertex (b1) at (3,0.4) [label=right:$\pt$] {};
\vertex (b2) at (3,-0.4) [label=right:$F$] {};
\draw (a) edge (b1);
\draw (a) edge (b2);
\end{tikzpicture}
\]

The Euler class is
\[ e(N^{\text{vir}}) = \frac{(-t-\psi_1) (-t-\psi_2)}{1} \frac{ (t (-t))^2 (-t)}{(-t)^2 ((-t)^g + \ldots + (-1)^g \lambda_g)} \]
which yields the contribution
\[ u^4 \Blangle \frac{\pt}{-t-\psi_1} \frac{F}{-t-\psi_2} \big( (-t)^g + \ldots + (-1)^g \lambda_g \big) \tau_0(\pt) \Brangle. \]
We specialize $t$ to $-1$.
(After expanding the rational functions above in $t$ only the coefficient of $t^0$
is non-zero and contributes. Hence, it doesnt matter to which value we specialize $t$.)
The result is
\[ u^4 \Blangle \hodge \frac{\pt}{1-\psi_1} \frac{F}{1-\psi_2} \tau_0(\pt) \Brangle. \]

This yields all the five terms described above. \\

\noindent \textbf{Calculation 2.} We compare our calculation above
with a corresponding calculation in the relative context.
Namely, we use the localization formula to compute the connected invariants
\[ \Blangle (1,\pt)(1,\e) \Big| \tau_(F \boxtimes \omega) \tau_0(\pt \boxtimes \omega) \Brangle^{S \times \p^1} . \]

The contributing terms are as follows:

\noindent \textbf{(1)}.

\[
\begin{tikzpicture}
\vertex (a) at (0,0.4) [label=left:$\pt$] {};
\vertex (a2) at (0,-0.4) [label=left: $\e$] {};
\node[shape=circle,draw, inner sep=2pt] (b) at (3,0) {$\pt, F$};
\node (c) at (0,-1) [label=below:$\mathbf{0}$] {};
\node (c) at (3,-1) [label=below:$\mathbf{\infty}$] {};
\draw (a) edge (b);
\draw (a2) edge (b);
\path[draw, snake it] (0,1) -- (0,-1);
\path[draw] (0,-1) -- (3,-1);
\end{tikzpicture}
\]

Here the left hand curly line denotes the relative divisor over $0$.
The circled node carries the class $\beta$ and lies over $\infty$.
We have
\[ e(N^{\text{vir}})
 = \frac{(t-\psi_1) (t-\psi_2)}{1} \frac{t^3}{t^2 (t^g + \ldots + (-1)^g \lambda_g)} \,.
\]
Note: There is no contribution from automorphism over $0$, since these are relative marked points.
Because we work in a relative geometry, the tangent bundle in the edge term
$H^0(C_i, f^{\ast}(T_{\p^1}))$
has to be replaced with the logarithmic tangent bundle $T_{\p^1}(- \textbf{0})$,
which yields the moving part contribution
\[
e^{\text{mov}}( H^0(C_i, f^{\ast}(T_{\p^1}(-\textbf{0})))) = t \cdot \frac{t}{2} \dots \frac{t}{d} = \frac{t^d}{d!}
\]
for a degree $d$ Galois cover.
(In the non-relative case, we would have $t (t/2) (t/3) \ldots (t/d) (-t/d) \ldots (-t)$).
Above, each connecting edge contributes $t$, and the term from the vertex over $\infty$ contributes another $t$,
hence the numerator in the second factor is $t^3$.

The contribution is therefore
\begin{multline*}
u^4 \Blangle \frac{\pt}{t-\psi_1} \frac{\e}{1-\psi_2} \frac{1}{t} (t^g + \ldots + (-1)^g \lambda_g) \tau_0(F) \tau_0(\pt) t^2 \Brangle^S \\
= u^4 \Blangle \hodge \frac{\pt}{1-\psi_1} \frac{\e}{1-\psi_2} \tau_0(F) \tau_0(\pt) \Brangle^S.
\end{multline*}

\noindent \textbf{(2)}.

\[
\begin{tikzpicture}
\vertex (a) at (0,0.2) [label=left:$\pt$] {};
\vertex (a2) at (0,-0.2) [label=left:$\e$] {};
\vertex (b) at (3,0) [label=above right: ${(2, \pt)}$] {}; 
\node (bd) at (3,-1) [label=below:$\mathbf{0}$] {};
\node (cd) at (6,-1) [label=below:$\mathbf{\infty}$] {};
\node[shape=circle,draw, inner sep=2pt] (c) at (6,0) {$\pt, F$};
%
%
\draw (b) edge node[below]{$2$} (c);
\path[draw, snake it] (3,1) -- (3,-1);
\path[draw, snake it] (0,1) -- (0,-1);
\path[draw] (0,-1) -- (3,-1) -- (6,-1);
\end{tikzpicture}
\]
Here $\eta = (2, \pt)$ denotes the glueing condition over $\mathbf{0}$
on the side of $S \times \p^1$ (not the bubble side).
The relative conditions with simple intersection are denoted
by $\gamma$ instead of $(1,\gamma)$.
The curve in the bubble is not graphed since it may be arbitrary and is not fixed
by a $\BC^{\ast}$-action.

The Euler class of the moving normal bundle is
\[
e(N^{\text{vir}}) =
\frac{(-t-\Psi_\infty) (t/2 - \psi_1)}{1} \frac{t^2/2 \cdot t}{t (t^g + \ldots + (-1)^g \lambda_g},
\]
where $\Psi_{\infty}$ is the Psi class
over the relative point over $\mathbf{0}$.
Note that the torus weight is $-t$ and not $-t/2$ over $0$,
since the corresponding line bundle
has a factor of $T_{\p^1, \mathbf{0}}$ without pulling back by any map;
the deformation space corresponds to deforming the whole bubble,
and not any single component of the map. The reasons
is the predeformability condition: deforming a single node over $\mathbf{0}$
is equivalent to deforming all component, see the paper by Graber-Vakil.
In particular, if we have several curves intersecting over $\mathbf{0}$
we also only get a single copy of $-t - \Psi_{\infty}$.

Because of the glueing over $\mathbf{0}$ we must multiply with
a factor of $\Fz(\eta)$ where $\eta$ is the relative glueing.
If $\eta = (\eta_1, \eta_2, \ldots \ )$, then
$\Fz(\eta) = | \Aut(\eta) | \prod_i \eta_i$.
The automorphism factor arises since we consider unordered partitions
on both sides, the factor $\eta_i$ arises as outcome of the local considerations
in the degeneration formula [Jun Li].

Here $\eta = (2, \pt)$ and $\Fz(\eta) = 2$. We have an automorphism factor
of $2$ coming from the degree $2$ Galois cover.
In total, we hence obtain the contribution
\begin{multline*}
\Blangle (1,\pt)(1,\e) \Big| \frac{1}{-t -\Psi_{\infty}} \Big| (2,1) \Brangle^{S \times \p^1, \sim} \cdot 2 \cdot \frac{1}{2} \cdot \\
\Blangle (t^g + \ldots + (-1)^g \lambda_g) \frac{2}{t^2} \frac{\pt}{\frac{t}{2} - \psi_1} \tau_0(\pt) \tau_0(F) t^2 \Brangle^S
\end{multline*}
where we note that the degree matches the virtual dimension across the total
insertions.

Since the left hand side insertion $(1, \pt)(1,\e)$ act as
if they were ordered points (carring different insertions), we have
\[
\Blangle (1,\pt)(1,\e) \Big| (2,\e) \Brangle^{S \times \p^1, \sim}
=
2 \blangle (1,1) \big| (2) \brangle^{\p^1, \sim} = 1.
\]
We find the total contribution is
\[
-4 u^4 \Blangle \hodge \frac{\pt}{1-2\psi_1} \tau_0(\pt) \tau_0(F) \Brangle^S.
\]

\noindent \textbf{(3)}. 

\[
\begin{tikzpicture}
\vertex (a) at (0,0.2) [label=left:$\pt$] {};
\vertex (a2) at (0,-0.2) [label=left:$\e$] {};
\vertex (b1) at (3,0.15) [label=above right: ${\pt}$] {};
\vertex (b2) at (3,-0.15) [label=below right: ${\pt}$] {};
\node (bd) at (3,-1) [label=below:$\mathbf{0}$] {};
\node (cd) at (6,-1) [label=below:$\mathbf{\infty}$] {};
\node[shape=circle,draw, inner sep=2pt] (c) at (6,0) [label=below:{\small $g-1$}] {$\pt, F$};
%
%
\draw (b1) edge (c);
\draw (b2) edge (c);
\path[draw, snake it] (3,1) -- (3,-1);
\path[draw, snake it] (0,1) -- (0,-1);
\path[draw] (0,-1) -- (3,-1) -- (6,-1);
\end{tikzpicture}
\]

The relative condition over $\mathbf{0}$ on the $S \times \p^1$-side is $(1,\pt)^2$.
Hence, we will require the following bubble evaluation:
\[ \Blangle (1,\pt)(1,\e) \Big| \Psi_{\infty} \Big| (1,\e)^2 \Brangle_{g,(2,0)}^{S \times \p^1, \bullet \sim}
 = \frac{1}{2} \delta_{g0}
\]
[\textbf{Proof}. We remove the insertion $\Psi_{\infty}$ following
[Maulik-Pandharipande, Topological View on Gromov-Witten theory].
Namely, we have
\begin{align*}
& \quad \blangle (1,\pt)(1,\e) \big| \Psi_{\infty} \Big| (1,\e)^2 \brangle_{g,(2,0)}^{S \times \p^1, \bullet \sim} \\
& =
\frac{1}{2g-2+4} \blangle (1,\pt)(1,\e) \big| \tau_1(1) \Psi_{\infty} \Big| (1,\e)^2 \brangle_{g,(2,0)}^{S \times \p^1, \bullet \sim} \\
& =
\sum_{\eta}
\frac{1}{2g+2} \blangle (1,\pt)(1,\e) \big| \tau_1(1) \Big| \eta \brangle^{\bullet \sim}_{g_1}
\Fz(\eta) \blangle \eta^{\vee} \Big| (1,\e)^2 \brangle^{\bullet \sim}_{g_2}.
\end{align*}
The second factor must be actually connected, since
other wise we must have $\eta = \{ (1,\e)^2 \}$
which is impossible by dimension reasons.
Then, in the second term the total degree in the $S$-drection must be $2$,
which shows that $\eta = (2, \e)$ or $\eta = (1, \alpha)(1,\beta)$ where $\deg(\alpha) + \deg(\beta) = 2$.
Since the second can't happen for dimension reaons, we must have $\eta = (2,\e)$.
The second term hence evaluates $\blangle (2, \pt) \big| (1,\e)^2 \brangle^{\bullet \sim}_{g_2} = \frac{1}{2} \delta_{g_2, 0}$,
which  (since $g = g_1 + g_2 + l(\eta) - 1$ and $g_2 =0$ and $l(\eta) = 1$) yields
\begin{align*}
& \quad \frac{1}{2g+2} \blangle (1,\pt)(1,\e) \big| \tau_1(1) \Big| (2, \e) \brangle^{\bullet \sim}_{g}
2 \cdot \frac{1}{2} \\
& = 
\frac{2g-2+3}{2g+2} \blangle (1,\pt)(1,\e) \big| (2, \e) \brangle^{\bullet, \sim}_{g} \\
& = \frac{2g-2+3}{2g+2} \delta_{g0} \\
& = \frac{1}{2} \delta_{g0}.
\end{align*}
\textbf{End Proof}]

In particular the genus of the map to $S$ over $\infty$ is $g-1$ as claimed in the picture.
The Euler class of the normal bundle is
\[
e(N^{\text{vir}}) = \frac{(t-\psi_1) (t-\psi_2)(-t- \Psi_{\infty})}{1}
\frac{t^2 \cdot t}{t^2 (t^{g-1} + \ldots + (-1)^{g-1} \lambda_{g-1})}
\]
We have $\Fz( (1,\pt)^2 ) = 2$ and an automorphism interchanging the two degree $1$ edges
(This is possible since the left relative edges are unordered!).
Hence, the contribution is
\begin{multline*}
u^6 \Blangle (1,\pt)(1,\e) \Big| \frac{1}{-t-\Psi_{\infty}} \Big| (1,\e)^2 \Brangle^{S \times \p^1, \sim} \cdot 2 \cdot \frac{1}{2} \\
\Blangle \frac{1}{t} (t^g + \ldots + (-1)^g \lambda_g) \frac{\pt}{t - \psi_1} \frac{\pt}{t-\psi_2} \tau_0(\pt) \tau_0(F) t^2 \Brangle^{S}
\end{multline*}
which simplifies to
\[ \frac{1}{2} u^6 \Blangle \hodge \frac{\pt}{1-\psi_1} \frac{\pt}{1-\psi_2} \tau_0(\pt) \Brangle^S. \]

\noindent Remark. We started out with connected invariants, hence
above one should be careful to see that the resulting curve that contributes is actually connected.
This follows, since in the bubble curve only connected curves contribute.

\noindent \textbf{(4)}. 

\[
\begin{tikzpicture}
\vertex (a) at (0,0.2) [label=left:$\pt$] {};
\vertex (a2) at (0,-0.2) [label=left:$\e$] {};
\vertex (b1) at (3,0.5) [label=above right: ${\pt}$] {};
\vertex (b2) at (3,-0.2) [label=below right: ${\pt}$] {};
\node (bd) at (3,-1) [label=below:$\mathbf{0}$] {};
\node (cd) at (6,-1) [label=below:$\mathbf{\infty}$] {};
\node[shape=circle,draw, inner sep=2pt] (c) at (6,0.5) {$\pt, F$};
\draw (b1) edge (c);
\path[draw] (3,-0.2) -- (6,-0.2);
\path[draw, snake it] (3,1) -- (3,-1);
\path[draw, snake it] (0,1) -- (0,-1);
\path[draw] (0,-1) -- (3,-1) -- (6,-1);
\end{tikzpicture}
\]

The Euler class is
\[ e(N^{\text{vir}})
 = \frac{(t-\psi_1) (-t-\Psi_{\infty})}{t} \frac{t^2 \cdot t}{t (t^g + \ldots + (-1)^g \lambda_g} .
\]
The automorphism factor is $1$, the glueing factor $2$, hence in total
similar to before
\[ u^4 \Blangle \hodge \frac{\pt}{1-\psi_1} \Brangle^S. \]

\noindent \textbf{Result.} We find
\begin{align*}
& \quad \quad \Blangle (1,\pt)(1,\e) \Big| \tau_(F \boxtimes \omega) \tau_0(\pt \boxtimes \omega) \Brangle^{S \times \p^1} \\
= \quad  &
u^4 \Blangle \hodge \frac{\pt}{1-\psi_1} \frac{\e}{1-\psi_2} \tau_0(F) \tau_0(\pt) \Brangle^S \\
- & 4 u^4 \Blangle \hodge \frac{\pt}{1-2\psi_1} \tau_0(\pt) \tau_0(F) \Brangle^S \\
+ & \frac{1}{2} u^6 \Blangle \hodge \frac{\pt}{1-\psi_1} \frac{\pt}{1-\psi_2} \tau_0(\pt) \Brangle^S \\
+ & u^4 \Blangle \hodge \frac{\pt}{1-\psi_1} \Brangle^S.
\end{align*}

This differs from the previous calculation by $u^4 \Blangle \hodge \frac{\pt}{1-\psi_1} \frac{F}{1-\psi_2} \tau_0(\pt) \Brangle^S$
which is precisely the term that arises in the degeneration formula.
}

\Extended{
\noindent \textbf{Lemma.} We have the evaluation
\[
\blangle \mu |\nu \brangle^{\p^1 , \bullet}
= \frac{\delta_{\mu, \nu}}{\Fz(\mu)} \,,
\]
where we use degree $|\mu|$ and the genus $g$ is determined by the insertions via
the dimension constraint; the combinatorial factor is
\[ \Fz(\mu) = | \Aut(\mu) | \cdot \prod_i \mu_i \,. \]

\noindent \textbf{Proof.}
All connected invariants $\langle \mu, \nu \rangle^{\p^1}$ vanish by dimension reasons
except for $\mu = (d)$ and $\nu = (d)$, and genus $0$.
In this exceptional case, the moduli space consists of a single point
parametrizing a map to $\p^1$ with full ramification over $0, \infty$.
The automorphism group is $\BZ_d$ (for example,
the map $z \mapsto z^d$ has automorphism obtained by multiplication with $d$th root of unities).
Hence,
\[ \langle (d), (d) \rangle^{\p^1}_{g=0,d} = \frac{1}{d} \,. \]
We immediately find, that the disconnected invariants vanish unless $\mu = \nu$.
If $\mu = \nu$, then the disconnected invariant with unordered relative markings
is $|\Aut(\mu)|^{-2}$ times the invariant with ordered markings on both relative sides.
Let $m_i = |\{ j | \mu_j = i \}|$ be the number of times $i$ appears in $\mu$.
Then there are $m_i!$ choices for matching up the $m_i$ relative points over $0$
and the $m_i$ relative points over $\infty$ which are labeled with $i$.
Hence, in total we have $\prod_i m_i! = | \Aut(\mu) |$ choices.
We conclude
\begin{multline*}
\blangle \mu ||\nu \brangle^{\p^1 , \bullet}
= \frac{\delta_{\mu,\nu}}{|\Aut(\mu)|^2} |\Aut(\mu)| \prod_i \left( \langle (i), (i) \rangle^{\p^1} \right)^{m_i} \\
= \frac{\delta_{\mu,\nu}}{|\Aut(\mu)|} \prod_j \frac{1}{\mu_j} = \frac{\delta_{\mu,\nu}}{\Fz(\mu)} \,. \qedhere
\end{multline*}
}

\subsection{Proof of Theorem \ref{thm_GWHilb_correspondence}}
We consider the case $d=2$. The invariants
\begin{equation}
\begin{gathered}
\label{special_case_invs}
\big\langle\, (1,F)^2, \, D(F) , \, (1,F)^2 \, \big\rangle^{S \times \p^1, \bullet} \\
\big\langle\, (1,\pt)(1,F), \, D(F) , \, D(F) \, \big\rangle^{S \times \p^1, \bullet} \\
\big\langle\, (1,F)^2, \, D(F) , \, D(\pt) \, \big\rangle^{S \times \p^1, \bullet} \\
\big\langle\, (2,\pt), \, D(F),  \, D(F) \, \big\rangle^{S \times \p^1, \bullet} \\
\big\langle\, D(\pt), \, D(F), \, D(\pt) \, \big\rangle^{S \times \p^1, \bullet}
\end{gathered}
\end{equation}
were computed in Theorem \ref{mainthm_1b} and Lemmas \ref{L0}, \ref{L1} and \ref{L2}. 
By comparision with the results of \cite{HilbK3},
the GW/Hilb correspondence (Conjecture \ref{GW/Hilb_correspondence})
holds in the case of the invariants \eqref{special_case_invs}.
Under the GW/Hilb correspondence the WDVV equations on the Hilbert scheme side
correspond to the relations of Proposition \ref{Proposition_WDVV_analog}.
Similarly, the divisor axiom on the Hilbert scheme side corresponds to Proposition \ref{Proposition_take_out_div} above.
A direct check shows that all degree $2$ relative invariants
\[ \big\langle \lambda_1, \lambda_2, \lambda_3 \brangle^{S \times \p^1, \bullet}_{g, (\beta_h,2)} \]
can be reduced to the invariants \eqref{special_case_invs}
using the relations of Propositions \ref{Proposition_WDVV_analog} and \ref{Proposition_take_out_div}.
Since, under the correspondence \eqref{eqn_correspondence},
both the genus $0$ invariants of $\Hilb^2(S)$ and the relative invariants of $S \times \p^1$ in degree $2$
are goverened by the same set of non-degenerate equations and initial values, 
they are equal.

We consider $d=1$. The invariants of $S \times \p^1$ in class $(\beta_h,1)$ with relative insertions $(1,F)$, $(1,F)$, $(1,F)$
are determined by Proposition~\ref{vanishing1} via a degeneration argument.
The result matches the corresponding series on the Hilbert scheme $\Hilb^1(S) = S$.
The remaining invariants in degree $1$ are determined by Proposition~\ref{Proposition_take_out_div}.
Hence the result follows by the same argument as above.
\qed

\subsection{The product $S \times E$} \label{Section_Proof_of_SxE_Theorem}
Let $d \geq 0$ be an integer, and let 
\[ \mathsf{N}_{g,h,d}^{S \times E} = \blangle \tau_0(F \boxtimes \omega) \brangle^{S \times E}_{g, (\beta_h,d)} \]
be the absolute reduced Gromov-Witten invariants of the product $S \times E$,
where we as usual work with the elliptically fibered K3 surface $S$ with section class $B$, fiber class $F$ and curve class $\beta_h = B + hF$.

Degenerating the elliptic curve $E$ to a nodal curve and resolving,
and degeneration off the $\tau_0(F \boxtimes \omega)$ insertion, we obtain
\begin{multline} \label{500}
\sum_{g,h} \mathsf{N}_{g,h,d}^{S \times E} u^{2g-2} q^{h-1}
= 
\sum_{\eta} \Fz(\eta) \blangle \eta, \eta^{\vee}, D(F) \brangle^{S \times \p^1, \bullet} \\
+
\chi(\Hilb^d(S)) \sum_{g,h} d! \blangle (1,\pt)^d \big| \tau_0(F \boxtimes \omega) \brangle^{S \times \p^1} u^{2g-2+2d}
q^{h-1}
\end{multline}
where $\eta$ runs over the set $\CP(d)$ of cohomology weighted partitions of size $d$
weighted by a fixed basis $\{ \gamma_i \}$,
$\eta^{\vee}$ is the dual partition of $\eta$,
and $\chi(\Hilb^d(S))$ is the topological Euler characteristic of $\Hilb^d(S)$.
The second term on the right hand side of \eqref{500} can be computed by localization
and Theorem~\ref{mainthm_1}. We obtain
\begin{multline} \label{501}
\sum_{g,h} \mathsf{N}_{g,h,d}^{S \times E} u^{2g-2} q^{h-1}
= 
\sum_{\eta} \Fz(\eta) \blangle \eta, \eta^{\vee}, D(F) \brangle^{S \times \p^1, \bullet}
+
\frac{\chi(\Hilb^d(S)) \mathbf{G}(u,q)^d}{\Theta(u,q)^2 \Delta(q)} .
\end{multline}

\begin{proof}[Proof of Theorem \ref{Theorem_K3xE}]
Under the GW/Hilb correspondence (Conjecture~\ref{GW/Hilb_correspondence}) and by a degeneration argument,
$\sum_{\eta} \Fz(\eta) \blangle \eta, \eta^{\vee}, D(F) \brangle^{S \times \p^1, \bullet}$ equals
\begin{equation} \label{123999}
\mathcal{H}_d(y,q) =
\sum_{h \geq 0} \sum_{k \in \BZ} q^{h-1} y^k
\int_{[ \Mbar_{(E,0)}(\Hilb^d(S), \beta_h + kA) ]^{\text{red}}} \ev_0^{\ast}(F)
\end{equation}
under the variable change $y = -e^{iu}$, where we follow the notation of \cite{K3xE}.
Since the GW/Hilb correspondence has been proven for $d=1$ and $d=2$ above,
the claim now follows from the Katz-Klemm-Vafa formula \cite{MPT} for $d=1$,
and Proposition~$2$ of \cite{HilbK3} for $d=2$.
Alternatively, in case $d=1$ and $d=2$ the right hand side of \eqref{501} can be directly evaluated on $S \times \p^1$
by reduction to the invariants \eqref{special_case_invs}.
\end{proof}

We analyze \eqref{501} further.
By \cite[Theorem 2]{ReducedSP} we have the expansion
\[
\sum_{g} \mathsf{N}_{g,h,d}^{S \times E} u^{2g-2} q^{h-1}
=
\sum_{g=0}^{N} \mathsf{n}_{g, h, d} (y^{1/2} + y^{-1/2})^{2g-2}
\]
where $y = -e^{iu}$ and $\mathsf{n}_{g, h, d} \in \BZ$.
A calculation of the (disconnected) genus $0$ Gromov-Witten invariants of $S \times E$
using the product formula yields
\[ \mathsf{n}_{0, h,d} = p_{24}(h) p_{24}(d), \]
where we let
\[ p_{24}(n) = \Big[ \frac{1}{\Delta(q)} \Big]_{q^{n-1}} = \chi(\Hilb^n(S)). \]
On the other hand, the coefficient of $u^{-2} q^{h-1}$ in the second term on the right hand side of \eqref{501} is
\[
\chi(\Hilb^d(S)) \Big[ \frac{\mathbf{G}(u,q)^d}{\Theta(u,q)^2 \Delta(q)} \Big]_{u^{-2} q^{h-1}}
=
p_{24}(d) \cdot p_{24}(h).
\]
This shows the following.
\begin{cor} For every $d \geq 0$ we have
\[
\sum_{g,h} \mathsf{N}_{g,h,d}^{S \times E} u^{2g-2} q^{h-1}
= 
\mathcal{F}_d(u,q)
+
\chi(\Hilb^d(S)) \frac{\mathbf{G}(u,q)^d}{\Theta(u,q)^2 \Delta(q)} \,.
\]
where under the variable change $y = e^{iu}$,
\[ \mathcal{F}_d(u,q) =
 \sum_{g=1}^{m} \mathsf{n}'_{g, h, d} (y^{1/2} + y^{-1/2})^{2g-2}.
\]
with $\mathsf{n}'_{g, h, d} \in \BZ$.
In particular $\mathcal{F}_d(u,q)$ is a holomorphic entire function in $u \in \BC$.
\end{cor}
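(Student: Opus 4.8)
The decomposition asserted in the corollary is exactly equation \eqref{501}: we set
\[ \mathcal{F}_d(u,q) = \sum_{\eta} \Fz(\eta)\, \blangle \eta, \eta^{\vee}, D(F) \brangle^{S \times \p^1, \bullet}, \]
the first summand on the right of \eqref{501}, so that the only content left to prove is the structural claim about $\mathcal{F}_d$: for each fixed $h$ its coefficient of $q^{h-1}$ is a \emph{finite} Laurent polynomial in $r = y^{1/2}+y^{-1/2}$ of the shape $\sum_{g\geq 1}\mathsf{n}'_{g,h,d}\, r^{2g-2}$ with $\mathsf{n}'_{g,h,d}\in\BZ$. Since $r^2 = 2 + y + y^{-1}$ is a trigonometric polynomial in $u$ and $r^{2g-2} = (r^2)^{g-1}$, every term with $g\geq 1$ is entire in $u$, and a finite such sum is entire; thus the entireness claim reduces to controlling the \emph{lowest} power of $r$.

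The plan is to compare the two readings of $\sum_{g}\mathsf{N}^{S\times E}_{g,h,d}u^{2g-2}q^{h-1}$. First I would invoke \cite[Theorem 2]{ReducedSP}, which gives for each fixed $h$ a finite expansion $\sum_{g=0}^{N}\mathsf{n}_{g,h,d}\, r^{2g-2}$ with $\mathsf{n}_{g,h,d}\in\BZ$; this provides simultaneously the finiteness and the integrality of the left-hand side. The genus $0$ term is pinned down by the product formula for the disconnected genus $0$ Gromov–Witten theory of $S\times E$: the count factors into the Yau–Zaslow number of rational curves on $S$ in class $\beta_h$ and the contribution of the $E$-direction, giving $\mathsf{n}_{0,h,d} = p_{24}(h)\,p_{24}(d)$.

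Next I would analyze the second term $\chi(\Hilb^d(S))\,\mathbf{G}^d/(\Theta^2\Delta)$ coefficient-by-coefficient in $q$. The key identity is $\mathbf{G}/\Theta^2 = -(\wp + 2C_2)$, so that $\mathbf{G}^d/\Theta^2 = -\mathbf{G}^{d-1}(\wp + 2C_2)$ with $\mathbf{G}^{d-1}$ a \emph{holomorphic} Jacobi form (the double zero of $\Theta^2$ cancels the double pole of $\wp$ inside $\mathbf{G}$). Using the Fourier expansion of $\wp$, whose $q^0$-coefficient carries the only pole — a double pole at $u=0$, i.e.\ $y=-1$ — while all higher $q$-coefficients are Laurent polynomials in $y$, one sees that each $q$-coefficient of $\mathbf{G}^d/\Theta^2$ has, in the variable $t = y + y^{-1}$, at most a simple pole at $t=-2$ (a double pole in $y$ at the branch point $y=-1$). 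Equivalently, each $q$-coefficient of the second term has the form $c\, r^{-2} + (\text{polynomial in } r^2)$, where the leading coefficient $c$ is the $u^{-2}$-coefficient, computed as $\chi(\Hilb^d(S))\,[1/\Delta]_{q^{h-1}} = p_{24}(d)\,p_{24}(h)$.

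Subtracting the two expansions then finishes the argument: the $r^{-2}$ coefficients of the left-hand side and of the second term both equal $p_{24}(h)\,p_{24}(d)$ and cancel, so $\mathcal{F}_d$ carries no $r^{-2}$ term and is a polynomial in $r^2$, i.e.\ of the claimed form with $g\geq 1$, hence entire. Integrality of $\mathsf{n}'_{g,h,d}$ follows by writing $\mathsf{n}'_{g,h,d} = \mathsf{n}_{g,h,d} - [\text{second term}]_{r^{2g-2}}$ and checking that the $(y,q)$-Fourier expansion of $\chi(\Hilb^d(S))\,\mathbf{G}^d/(\Theta^2\Delta)$ is integral, which can be read off from the explicit quasi-Jacobi generators. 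I expect the main obstacle to be precisely this second-term analysis: verifying that, $q$-coefficient by $q$-coefficient, the pole at $y=-1$ is genuinely of order $2$ and no worse — this is what makes $\mathcal{F}_d$ entire rather than merely meromorphic — and that the resulting coefficients are integral. Both points rest on the holomorphicity of $\mathbf{G}$ and on the precise Fourier expansion of $\wp$, rather than on any further geometric input.
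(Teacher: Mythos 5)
Your proposal is correct and follows essentially the same route as the paper's own proof: both take $\mathcal{F}_d$ to be the relative-invariants term in \eqref{501}, invoke \cite[Theorem 2]{ReducedSP} for the finite, integral BPS expansion of the left-hand side in $r = y^{1/2}+y^{-1/2}$, compute $\mathsf{n}_{0,h,d} = p_{24}(h)\,p_{24}(d)$ via the product formula, and cancel this against the $u^{-2}$-coefficient $p_{24}(d)\,p_{24}(h)$ of $\chi(\Hilb^d(S))\,\mathbf{G}^d/(\Theta^2\Delta)$. The only difference is that you make explicit the pole analysis of $\mathbf{G}^d/\Theta^2$ at $y=-1$ (via $\mathbf{G} = -\Theta^2(\wp+2C_2)$, so each $q$-coefficient is $c\,r^{-2}$ plus a polynomial in $r^2$), a step the paper leaves implicit in its concluding ``this shows the following''; your filling of that gap is accurate.
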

Hence we have proven the natural splitting
of the invariants of $S \times E$ into a finite holomorphic part $\mathcal{F}_d$
(conjecturally equal to the the Hilbert scheme invariants $\mathcal{H}_d$)
and the polar part (a correction term), see the discussion of Conjecture A in \cite{K3xE}.

\appendix
\section{Gromov-Witten invariants of K3 surfaces} \label{Appendix_K3}
\subsection{Overview}
Let $S$ be an elliptic K3 surface with section, let $B$ and $F$
be the section and fiber class respectively, set
$\beta_h = B + h F$ where $h \geq 0$,
and let $\pt \in H^4(S,\BZ)$ be the class of a point.
Recall the generating series notation \eqref{Generating_Series_K3_Surfaces} for the surface $S$.
In this section we will explain how the invariants
\begin{equation} \Blangle \BE^{\vee}(1) \prod_{i} \tau_{k_i}(\pt) \prod_j \tau_{\ell_j}(F) \Brangle_g^S \label{51451451} \end{equation}
can be obtained from the Gromov-Witten theory of elliptic curves.
While the method we present yields an effective algorithm
for the computation of \eqref{51451451} for every genus $g$,
it seems difficult to obtain closed formulas in this way.
\subsection{Computation}
By degenerating $S$ to a union of $S$ with $m+n+1$-copies of $\p^1 \times E$
with each of the first $m+n$ copies receiving a marked point, and
using \eqref{KKV} for the first and \eqref{P1xE_Lemma} for the last term,
we have
\[
\Blangle \BE^{\vee}(1) \prod_{i} \tau_{k_i}(\pt) \prod_j \tau_{\ell_j}(F) \Brangle^S
=
\frac{1}{\Theta(u,q)^2 \Delta(q)} \prod_i A_{k_i}(u,q) \prod_j B_{\ell_j}(u,q)
\]
where for all $k \geq 0$ we let
\begin{alignat*}{2}
A_{k}(u,q) & = \sum_{g \geq k+1} (-1)^{g-k-1} A_{k,g}(q) u^{2g}, &
\quad \quad 
A_{k,g}(q) & = \blangle \omega \big| \lambda_{g-k-1} \tau_{k}(\pt) \big| 1 \brangle^{\p^1 \times E}_{g} \\
B_{k}(u,q) & = \sum_{g \geq k} (-1)^{g-k} B_{k,g}(q) u^{2g}, &
\quad \quad
B_{k,g}(q) & = \blangle \omega \big| \lambda_{g-k} \tau_{k}(F) \big| 1 \brangle^{\p^1 \times E}_{g}.
\end{alignat*}
By a further degeneration and Lemma \ref{P1xE_Lemma} we have
\[
A_{k,g}(q) = \blangle \omega \big| \lambda_{g-k-1} \tau_{k}(\pt) \brangle^{\p^1 \times E}_{g} \]
to which we apply the localization formula. This yields
\[
A_{k,g}(q)
 = \sum_{\substack{ i,j,\ell \geq 0 \\ 
 2i+j \leq g+\ell-1 \\
 \ell \leq g-k-1
 }} (-1)^{i+j+\ell} P(i,\ell) \cdot \blangle \tau_{g-2i-j+\ell-1}(\omega) \tau_k(\omega) \lambda_j \lambda_{g-k-1-\ell} \brangle^E_{g-i}
\]
where the invariants of a nonsingular elliptic curve $E$ are denoted by
\[
\blangle \alpha \, \tau_{k_1}(\gamma_1) \cdots \tau_{k_n}(\gamma_n) \brangle^E_g
=
\sum_{d \geq 0} \blangle \alpha \, \tau_{k_1}(\gamma_1) \cdots \tau_{k_n}(\gamma_n) \brangle^E_{g, d[E]} q^d
\]
and we set $P(0,0) = 1$, 
$P(g,\ell) = \langle \omega | \lambda_\ell \Psi_{\infty}^{g-\ell-1} | 1 \rangle^{\p^1 \times E, \sim}_{g}$
for all $g \geq \ell+1$, and $P(g,\ell) = 0$ otherwise. By the methods of \cite{MP} one proves
\[ \sum_{g,k} P(g,k) u^{2g} w^k = \exp\Big( \sum_{r \geq 1} C_{2r}(q) u^{2r} w^{r-1} \Big) \,, \]
where $C_{2r}(q)$ are the Eisenstein series \eqref{Eisenstein_Series}.
Similarly,
\[
B_{k,g}(q)
 = P(g,g-k) + \sum_{\substack{ i,j,\ell \geq 0 \\ 
 2i+j \leq g+\ell-1 \\
 \ell \leq g-k
 }} (-1)^{i+j+\ell} P(i,\ell) \cdot \blangle \tau_{g-2i-j+\ell-1}(\omega) \tau_k(1) \lambda_j \lambda_{g-k-\ell} \brangle^E_{g-i} \,.
\]
This reduces the computation of \eqref{51451451} to the evaluation of Gromov-Witten
invariants of an elliptic curve, which were completely determined
in \cite{OP3} and can be computed conveniently in the program \cite{GWall}.

We list the examples which are used in Section~\ref{Subsection_Proof_Thm_middle}.
\begin{align*}
\langle \tau_0(\pt) \rangle^{S}_{g=1} & = \frac{1}{\Delta} \big( -2 C_{2}^{2} + 10 C_{4} \big) \\
\langle \tau_1(\pt) \rangle^{S}_{g=2}           & = \frac{1}{\Delta} \big( -\frac{8}{3} C_{2}^{3} + 16 C_{2} C_{4} - 7 C_{6} \big) \\
\langle \tau_0(\pt) \lambda_1 \rangle^{S}_{g=2} & = \frac{1}{\Delta} \big( -4 C_{2}^{3} + 12 C_{2} C_{4} + 21 C_{6} \big) \\
\langle \tau_0(\pt) \tau_1(F) \rangle_{g=2}     & = \frac{1}{\Delta} \cdot 2 C_2 \cdot ( -2 C_{2}^{2} + 10 C_{4} \big).
%
\end{align*}

\end{document}